\newcommand\ifpdf
\newtheorem{Theorem}{Theorem}[section]
\newtheorem{Lemma}[Theorem]{Lemma}
\newtheorem{Definition}[Theorem]{Definition}
\newtheorem{Remark}[Theorem]{Remark}
\newtheorem{Example}[Theorem]{Example}
\newtheorem{Notation}[Theorem]{Notation}
\newcommand{\red}{\sqsubseteq}
\newcommand{\Orthant}{\mathbb O}
\newcommand{\koplus}[1]{\overset{#1}{\oplus}}
\DeclareMathOperator{\newt}{Newt}
\DeclareMathOperator{\supp}{supp}
\DeclareMathOperator{\conv}{conv}
\newcommand {\st} {{\rm s.t. }\hspace{0.5mm}}
\newcommand{\R}{{\mathbb R}}
\newcommand{\Q}{{\mathbb Q}}
\newcommand{\Z}{{\mathbb Z}}
\newcommand\fieldk{\mathsf k}
\begin{document}

\title{Computation of Atomic Fibers of $\Z$-Linear Maps}
\author{Elke Eisenschmidt}
\address{Elke Eisenschmidt, Otto-von-Guericke-Universit\"at Magdeburg,
  Department of  Mathematics, Institute for Mathematical Optimization
  (IMO), Universit\"atsplatz 2, 
  39106 Magdeburg, Germany}
\email{eisensch@imo.math.uni-magdeburg.de}

\author{Raymond Hemmecke}
\address{Raymond Hemmecke, Otto-von-Guericke-Universit\"at Magdeburg,
  Department of Mathematics, Institute for Mathematical Optimization
  (IMO), Universit\"atsplatz 2, 
  39106 Magdeburg, Germany} 
\email{hemmecke@imo.math.uni-magdeburg.de}
\thanks{The second author wishes to thank Jes\'us De Loera
for some useful suggestions on improving the presentation of an earlier
version of this paper.}

\author{Matthias K\"oppe}
\address{Matthias K\"oppe, Otto-von-Guericke-Universit\"at Magdeburg,
  Department of Mathematics, Institute for Mathematical Optimization
  (IMO), Universit\"atsplatz 2, 
  39106 Magdeburg, Germany} 
\email{mkoeppe@imo.math.uni-magdeburg.de}

\date{$\relax$Revision: 1.32 $ - \ $Date: 2007/04/20 12:55:58 $ $}

\begin{abstract}
For given matrix $A\in\Z^{d\times n}$, the set
$P_{b}=\{z:Az=b,z\in\Z^n_+\}$ describes the preimage or fiber
of $b\in\Z^d$ under the $\Z$-linear map
$f_A:\Z^n_+\rightarrow\Z^d$, $x\mapsto Ax$. The fiber $P_{b}$
is called atomic, if $P_{b}=P_{b_1}+P_{b_2}$ implies
$b=b_1$ or $b=b_2$. In this paper we present a novel algorithm to
compute such atomic fibers. An algorithmic solution to appearing
subproblems, computational
examples and applications are included as well.
\end{abstract}

\maketitle

%\paragraph{Key Words.}
%\paragraph{AMS subject classifications.}

\section{Introduction}
Decomposition of rational polyhedra is at the heart of several
interesting applications. However, there are different definitions
of decomposability depending on the application. These definitions
mainly differ in the treatment of the (integer) points of the
polyhedron.

The simplest notion is that of linear decomposition of polyhedra.
Two polyhedra $P,Q\subseteq\R^n$ are called {\em homothetic} if
$P=\lambda Q+t$ for some $\lambda>0$ and $t\in\R^n$. Here, a
polyhedron $P$ is called {\em indecomposable}, if any
decomposition $P=Q_1+Q_2$ implies that both $Q_1$ and $Q_2$ are
homothetic to $P$. It can be shown that there are only finitely
many indecomposable rational polyhedra that are not homothetic to
each other. For further details on this type of decomposition we
refer the reader for example to
\cite{Gruenbaum:67,henk-koeppe-weismantel-2000:miptestset,Kannan+Lovasz+Scarf:93,
McMullen:73,Meyer:74,Smith:87}.

Let us now come to a bit more restrictive decomposition. Here we
consider only polyhedra of the form $\{x\in\R^n:Ax\leq b\}$ for
a given matrix $A\in\Z^{d\times n}$ and varying $b\in\Z^d$. To
emphasize that we only consider integer right-hand sides, we say
that a polyhedron $P$ is {\em integrally indecomposable}, if any
decomposition $P=Q_1+Q_2$ (into polyhedra with integer right-hand
sides) implies that both $Q_1$ and $Q_2$ are homothetic to $P$.
This decomposition is more restrictive than the linear
decomposition, since only such polyhedra $Q_1$ and $Q_2$ are
allowed that have an {\em integer} right-hand side.
\cite{henk-koeppe-weismantel-2000:miptestset} showed finiteness of
the system of integrally indecomposable polytopes. This result implies
important applications: TDI-ness of each member of a family of systems
$Ax\leq b$, $b\in\Z^d$, can be concluded from TDI-ness of the
integrally indecomposable systems. Furthermore the finiteness of the
system of integrally indecomposable polytopes enables us to compute a
finite representation of a test set for a mixed-integer linear optimization
problem.

Another important application of integral decomposition of
polyhedra is that of factorizing a multivariate polynomial, see for
example \cite{Salem+Gao+Lauder:04} 
and the references therein. Here, one considers only polyhedra of the
form $\{x\in\R^n:Ax\leq b\}$ for given matrix $A\in\Z^{d\times n}$ and
varying $b\in\Z^d$, where each polyhedron is integer, that is, where
each polyhedron has only integer vertices. Note that the notion of
integral decomposability is restricted to integral polyhedra in this
application whereas the definition of
\cite{henk-koeppe-weismantel-2000:miptestset} is valid for arbitrary
rational polyhedra with integral right-hand side. The reason for this
restriction is the simple observation that the so-called Newton polytope
$\newt(f):=\conv\{\alpha_i \in \supp(f)\}$ associated to a
polynomial $f = \sum\nolimits_{i \in I}{a_i x^{\alpha_i}}$ with
$\supp(f) = \{\alpha_i \colon a_i \neq 0\}$ is integer by
definition. Moreover, the relation $f=gh$ among three polynomials $f$,
$g$, and $h$ implies 
$\newt(f)=\newt(g)+\newt(h)$, a theorem due to Ostrowski. 

A direct generalization of the above notion of integral decomposition of
integral polyhedra was introduced by \cite{Adams+Hosten+Loustaunau+Miller:99}.
They considered polytopes
\begin{displaymath}
  \tilde P_{b} := \conv \{ z : Az=b, z\in\Z^n_+\}
\end{displaymath}
called the \emph{fibers} of $b$ under the linear map $f_A\colon
\Z^n_+\rightarrow\Z^d$, $x\mapsto Ax$.  A fiber $\tilde P_b$ is called
\emph{atomic} if $\tilde P_{b}=\tilde P_{b_1}+\tilde P_{b_2}$ implies $b=b_1$
or $b=b_2$.  Note that $\tilde P_{b}=\tilde P_{b_1}+\tilde P_{b_2}$ means that
\emph{every vertex} of $\tilde P_{b}$ is the sum of a \emph{vertex} of $\tilde
P_{b_1}$ and a \emph{vertex} of $\tilde P_{b_2}$ (and vice versa).
Atomic fibers were used by \cite{Adams+Hosten+Loustaunau+Miller:99} to
construct strong SAGBI bases for subalgebras of polynomial rings. 
They proved that the family of atomic fibers is finite and
also gave an algorithm to compute atomic fibers via certain standard
pairs. Via this algorithm, \cite{Adams+Hosten+Loustaunau+Miller:99} computed
the atomic fibers of the 
twisted cubic, see Example~\ref{Example: Atomic Fibers
of Twisted Cubic}.\smallbreak

In this paper, we consider a slight variation of the notion of atomic
fibers that was introduced by \citet{Maclagan}.  Instead of considering convex hulls~$\tilde P_{b}$ of the preimages
\[
P_{b}:=\{z:Az=b,z\in\Z^n_+\},
\]
of the map $f_A\colon \Z^n_+\rightarrow\Z^d$, $x\mapsto Ax$, we consider the
preimages $P_b$ themselves.  In \citeauthor{Maclagan}'s more general
terminology, the sets $P_b$ are called $((0), A)$-fibers; we shall simply call
them \emph{fibers} in the remainder of this paper.
We call a fiber $P_{b}$ \emph{indecomposable} or \emph{atomic}, if
$P_{b}=P_{b_1}+P_{b_2}$ implies $b=b_1$ or $b=b_2$.
Note that $P_{b}=P_{b_1}+P_{b_2}$ means that {\em
every lattice point} of $P_{b}$ is the sum of a \emph{lattice
point} of $P_{b_1}$ and a \emph{lattice point} of
$P_{b_2}$ (and vice versa). This is indeed a very strong
condition, but again it was shown that there are only finitely many
(nonempty) atomic fibers for a given matrix $A$ \citep{Maclagan}. 
Note that atomic fibers are not only
minimal (with respect to decomposability) within the given family,
but also generate every fiber $P_{b}$ in this family as a
Minkowski sum $P_{b}=\sum\nolimits_{i=1}^{k} \alpha_iP_{b_i}$,
$\alpha_i\in\Z_+$, where $\alpha_i P_{b_i}$ stands for iterated
Minkowski-addition of $P_{b_i}$ with itself.
Atomic fibers (of this kind) were used in 
the computation of minimal vanishing sums of roots of unity
\citep{Steinberger:04}.  

Recently, the computation of atomic fibers also
appeared as a subproblem in the capacitated design of
telecommunication networks for a given communication demand
under survivability conditions \citep{EKL:2006}. 
In this application, the right-hand side vectors~$b$ are taken from a
sublattice or a submonoid of~$\Z^d$. Of course, restricting the set of
``feasible'' right-hand sides changes the notion of decomposability, thus the
set of atomic fibers is changed. 

Another related notion is that of \emph{extended} atomic fibers.
We call the set 
\[
Q_{b}:=\{z:Az=b,z\in\Z^n\}
\]
an \emph{extended fiber} of the linear map of~$A$.
We call it \emph{atomic}, if $(Q_{b}\cap\Orthant_j)=
(Q_{b_1}\cap\Orthant_j)+(Q_{b_2}\cap\Orthant_j)$ holds for
all the $2^n$ orthants $\Orthant_j$ of $\R^n$, then $b=b_1$ or
$b=b_2$. Here, as well, it can be shown that there are only finitely
many (nonempty) extended atomic fibers for a given matrix. Also
this very strong notion of decomposability has an application: the
set ${\mathcal H}_\infty$ constructed in \cite{Hemmecke:SIP2} for use
in two-stage stochastic integer programming is in fact the set of
extended atomic fibers of the family of extended fibers
\[
\{(x,y):x=b,Tx+Wy=0,x\in\Z^m,y\in\Z^n\}
\]
where $T$ and $W$ are kept fixed and where $b$ varies.

\subsection*{Outline}
In this paper, we are mainly concerned about designing efficient algorithms for
computing atomic and extended fibers.
The outline of the paper is as follows. In section~\ref{Section: Atomic
  fibers} we first define a hierarchy of \emph{partially
  extended fibers} that interpolate between fibers and extended fibers.
This hierarchy not only generalizes the notions of fibers and extended fibers,
but also plays a significant r\^ole in our algorithms.
Motivated by our application in survivable network design, we define 
decomposability with respect to a given finitely generated monoid of feasible
right-hand side 
vectors.  We prove that, in this more general situation as well, there are only
finitely many atomic fibers.  
We also present an algorithmic way to
decompose a fiber into a Minkowski sum of indecomposable fibers. 

In section \ref{Section: Computation of Atomic Fibers} we present a first
algorithm to compute the atomic extended fibers of a given matrix, following
the pattern of a completion procedure.  We present the algorithm in a
simplified setting where the right-hand side vectors are restricted to a
sublattice (rather than a submonoid) of~$\Z^d$. 
By restricting the atomic extended fibers to the positive orthants and
performing a simple reduction step, 
the atomic fibers (or partially extended fibers) of a matrix can be easily
obtained.  However, this method is not a very efficient one for computing atomic
fibers. 

Therefore, we present a more efficient way to compute
atomic fibers via a project-and-lift approach in section
\ref{Section: Preliminaries of the project-and-lift algorithm} and
\ref{Section: The k-th step of the project-and-lift algorithm}.  We present
the method in the general setting where a finitely generated monoid of
right-hand sides is given by its generators.

Both our algorithms enable us to compute not only the atomic fibers
$P_b$ but also the atomic fibers $\tilde{P}_b$ according to the
definition in \cite{Adams+Hosten+Loustaunau+Miller:99}. This will be
shown at the end of section \ref{Section: Computation of Atomic Fibers}.

Finally, in section \ref{Section: First computational results}, we present first
computational results of the project-and-lift algorithm.

\section{(Partially Extended) Atomic Fibers}
\label{Section: Atomic fibers}
Let us now start our treatment with a formal definition of partially
extended fibers.

\begin{Definition}
\label{partial fibers}
Let $A \in \Z^{d \times n}$ be a matrix, $b \in \Z^d$ and $0 \leq k
\leq n$.
\begin{enumerate}
\item[(i)]\addvspace{-8pt}
The set 
$$ Q_{b}^{(k)} := \{z \, : \, Az = b, \, z \in \Z_+^k \times
\Z^{n-k}\}$$
is called an \emph{partially extended fiber of order $k$} of the matrix
$A$. The set $Q_b := Q_b^{(0)}$ is called an \emph{extended fiber}, and $P_b :=
Q_b^{(n)}$ is called a \emph{fiber} of the matrix $A$.
\item[(ii)]
Let $0 \leq l \leq n$.
For $u,v \in \R^n$ we say that $u \red_l v$ if $u^{(i)}v^{(i)} \geq
0$ and $|u^{(i)}| \leq |v^{(i)}|$ for all components $i =1, \ldots,
l$. We will abbreviate $\red_n$ by $\red$. For $U,V,W \subseteq
\R^n$ we say that 
\begin{displaymath}
U = V \koplus{(l)} W
\end{displaymath}
and call $U$ the \emph{$l$-restricted Minkowski sum} of $V$ and $W$,
if for all $u \in U$ there exist $v \in V$, $w \in W$ with $v,w
\red_l u$ and $u = v + w$. Note that $V \mathop{\oplus}^{(0)} W$ is just
the ordinary Minkowski sum $V + W$. We will abbreviate
$\mathop{\oplus}^{(n)}$ by $\oplus$.
\item[(iii)]
For $0 \leq m \leq n$ we will denote by $\pi_m \colon \R^n \rightarrow
\R^m$ with $(x_1, 
\ldots, x_n) \mapsto (x_1, \ldots, x_m)$ the projection onto the first
$m$ components.
\end{enumerate}
\end{Definition}

Now we will go on defining \emph{atomic} partially extended fibers w.r.t.~a
certain monoid $M \subseteq \Z^d$.  To accompany the hierarchy of partially
extended fibers, we define a hierarchy of notions of decomposition that
interpolates between ordinary Minkowski sums and orthant-wise Minkowski sums. 

\begin{Definition}
\label{defining atomic fibers}
Let $A \in \Z^{d \times n}$ be a matrix, $b \in \Z^d$ and $0 \leq k,l
\leq n$. Additionally, let $M \subseteq \Z^d$ be a monoid. 
\begin{enumerate}
\item[(i)]\addvspace{-8pt} We call $Q_b^{(k)}$ \emph{atomic
w.r.t.~$\mathop{\oplus}^{(l)}$ and $M$} if
there is no decomposition
\begin{displaymath}
Q_b^{(k)} = Q_{b_1}^{(k)} \mathop{\oplus}^{(l)} Q_{b_2}^{(k)}
\end{displaymath}
with $b_1,b_2 \in M$ and
$\pi_l(Q_{b_1}^{(k)}),\pi_l(Q_{b_2}^{(k)}) \neq \pi_l(Q_0^{(k)})$.
By $E^{(k)}_l(A,M)$ we denote the set of partially extended fibers of
order $k$ which are atomic w.r.t.~$\mathop{\oplus}^{(l)}$ and $M$.
\item[(ii)]
We denote by $E^{(k)}(A,M)$
the set $E_n^{(k)}(A,M)$ and call it the set of \emph{partially extended atomic
fibers w.r.t.~the monoid $M$}. We denote by $F(A,M)$ the set
$E^{(n)}(A,M)$ and call it the set of \emph{atomic fibers w.r.t.~$M$}.
\end{enumerate}
\end{Definition}

Note that Definition \ref{defining atomic fibers} also applies to the
special case where the monoid $M$ is a lattice. We will see later on
that it is 
much easier to compute the atomic (partially extended) fibers of a
matrix w.r.t.~a lattice instead of an arbitrary monoid.

As our first step, we prove a generalization of the finiteness result
for the family of atomic fibers.
\begin{Lemma}
\label{reduction and monoid-atomic}
Let $0 \leq k \leq n$ be fixed.
There are only finitely many partially extended fibers
$Q^{(k)}_b$ which are atomic w.r.t.~a finitely generated monoid $M$.
\end{Lemma}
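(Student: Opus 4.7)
The plan is to argue by contradiction via a Dickson's lemma argument in which the finite generation of $M$ is encoded as auxiliary nonnegative coordinates. Writing $M = \Z_+ g_1 + \cdots + \Z_+ g_s$ and assembling $G := (g_1 \mid \cdots \mid g_s) \in \Z^{d \times s}$, each $b \in M$ admits a representation $c \in \Z_+^s$ with $Gc = b$. I attach such a $c$ to elements of $Q_b^{(k)}$ and apply Dickson to the augmented pair $(z, c) \in (\Z_+^k \times \Z^{n-k}) \times \Z_+^s$; the coordinate $c$ certifies membership of $b$ in $M$ in a form detectable by componentwise comparison.

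Suppose for contradiction that the atomic partially extended fibers $Q_{b^{(1)}}^{(k)}, Q_{b^{(2)}}^{(k)}, \ldots$ are infinite in number and pairwise distinct. For each $i$ pick $z^{(i)} \in Q_{b^{(i)}}^{(k)}$ and $c^{(i)} \in \Z_+^s$ with $Gc^{(i)} = b^{(i)}$. The middle $n-k$ coordinates of $z^{(i)}$ take one of $2^{n-k}$ sign patterns, so by infinite pigeonhole I pass to a subsequence with a common sign pattern; on this subsequence $(|z^{(i)}|, c^{(i)}) \in \Z_+^{n+s}$, and Dickson's lemma yields indices $i < j$ with $z^{(i)} \red z^{(j)}$ and $c^{(i)} \leq c^{(j)}$. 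Setting $b' := b^{(j)} - b^{(i)} = G(c^{(j)} - c^{(i)})$, I obtain $b' \in M$, and $z^{(j)} - z^{(i)} \in Q_{b'}^{(k)}$ shows $Q_{b'}^{(k)}$ is nonempty. The decomposition $b^{(j)} = b^{(i)} + b'$ with both summands in $M$ and both fibers nonempty then threatens the atomicity of $Q_{b^{(j)}}^{(k)}$.

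The principal technical obstacle is promoting this single-element argument to a full $\oplus = \oplus^{(n)}$-decomposition: Definition~\ref{defining atomic fibers} requires \emph{every} $u \in Q_{b^{(j)}}^{(k)}$ to split sign-compatibly through $Q_{b^{(i)}}^{(k)}$ and $Q_{b'}^{(k)}$, not merely the representative $z^{(j)}$ handled above. To close this gap I would apply Dickson simultaneously to the union $S := \bigcup_i \bigl(Q_{b^{(i)}}^{(k)} \times \{c^{(i)}\}\bigr)$, extracting within each sign class the finitely many $\red$-minimal elements $(u^{(r)}, c^{(r)})$. Every $(u, c^{(i)}) \in S$ is then $\red$-dominated by some minimal witness, giving a sign-compatible splitting $u = u^{(r)} + (u - u^{(r)})$ through the atomic fiber $Q_{Gc^{(r)}}^{(k)}$. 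Since at most finitely many fibers can contain a minimal witness, for any atomic $Q_{b^{(j)}}^{(k)}$ outside this finite list every element is split by some witness; a pigeonhole on the finite witness list, coupled with a further Dickson reduction on the residual data $u - u^{(r)}$, should then force a single witness to serve as a universal summand for some such $Q_{b^{(j)}}^{(k)}$, yielding the desired contradiction to atomicity.
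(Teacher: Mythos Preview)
Your first two paragraphs correctly identify the setup and the single-element obstruction, and you are right to flag that the real difficulty lies in promoting a single sign-compatible splitting to a full $\oplus$-decomposition of the fiber. However, the argument you sketch in the third paragraph does not close this gap.

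The problem is structural. After extracting the finitely many $\red$-minimal witnesses $(u^{(r)},c^{(r)})$ from $S$, you know that every element $u$ of every sufficiently large atomic fiber $Q_{b^{(j)}}^{(k)}$ is $\red$-dominated by \emph{some} witness $u^{(r)}$ with $c^{(r)}\le c^{(j)}$. But different $\red$-minimal elements $u_1,\ldots,u_m$ of the \emph{same} fiber $Q_{b^{(j)}}^{(k)}$ may be dominated by witnesses lying in \emph{different} fibers $Q_{b^{(r_1)}}^{(k)},\ldots,Q_{b^{(r_m)}}^{(k)}$. A decomposition $Q_{b^{(j)}}^{(k)} = Q_{b_1}^{(k)} \oplus Q_{b_2}^{(k)}$ requires a \emph{single} pair $(b_1,b_2)$ that works for all of them simultaneously. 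Your proposed ``pigeonhole on the finite witness list'' cannot force this: even though the witness fibers are finitely many, the number of $\red$-minimal elements $m$ in $Q_{b^{(j)}}^{(k)}$ is not bounded as $j$ varies, and there is no reason a single witness index should serve every minimal element of some fixed $j$. The ``further Dickson reduction on the residual data'' is left unspecified, and I do not see a way to make it produce a uniform summand without essentially reproving Maclagan's theorem.

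This is exactly where the paper's argument differs in kind, not just in detail. Rather than applying Dickson to \emph{points}, the paper encodes the entire fiber together with its monoid representation $\alpha$ as a monomial ideal $I_\alpha = \langle x^{(z^+,z^-,\alpha)} : z \in Q_b^{(k)}\rangle$, so that the reduction relation $(\bar\alpha,Q_{\bar b}^{(k)}) \trianglelefteq (\alpha,Q_b^{(k)})$ (meaning $\bar\alpha\red\alpha$ \emph{and} $Q_b^{(k)} = Q_{\bar b}^{(k)} \oplus Q_{b-\bar b}^{(k)}$) becomes ideal containment $I_\alpha \subseteq I_{\bar\alpha}$. Maclagan's theorem on antichains of monomial ideals then gives finiteness in one stroke, precisely because ideal containment captures the simultaneous domination of \emph{all} generators---which is the uniformity your Dickson argument cannot supply. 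What remains is the easy observation that each atomic fiber, with a $\red$-minimal choice of $\alpha$, yields an irreducible pair, giving the required injection into a finite set.
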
 
The proof of this lemma is based on the following nice theorem.

\begin{Theorem} [\citealp{Maclagan}] \label{Theorem: Maclagan} 
  Let $\fieldk$ be a field. Let ${\mathcal I}$ be an infinite family of
  monomial ideals  
  in a polynomial ring $\fieldk[x_1,\ldots,x_n]$. Then there must exist
  ideals $I,J\in {\mathcal I}$ with $I\subseteq J$.
\end{Theorem}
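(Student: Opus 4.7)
The plan is to recast the theorem as a combinatorial statement about up-sets of $\N^n$ under inclusion and then prove that statement by induction on $n$, with Dickson's lemma supplying the basic well-quasi-order of $(\N^n,\leq)$.

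First I would set up the standard dictionary: each monomial ideal $I\subseteq\fieldk[x_1,\ldots,x_n]$ is uniquely determined by its support $\supp(I)=\{\alpha\in\N^n:x^{\alpha}\in I\}$, an up-set of $(\N^n,\leq)$ under componentwise order, with $I\subseteq J$ iff $\supp(I)\subseteq\supp(J)$. The theorem then reduces to showing that the poset of up-sets of $\N^n$ under inclusion has no infinite antichain. Dickson's lemma gives that $(\N^n,\leq)$ is a well-quasi-order, which in turn implies that every up-set is finitely generated by its minimal elements and that up-sets of $\N^n$ satisfy the ascending chain condition (ACC).

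The proof proceeds by induction on $n$. The base case $n=1$ is trivial since up-sets of $\N$ (namely $\emptyset$ and the tails $\{k,k+1,\ldots\}$) are linearly ordered by inclusion. For the inductive step, given up-sets $U_1,U_2,\ldots$ of $\N^n$, I slice each $U_k$ by its last coordinate, $U_k^{(t)}=\{\alpha'\in\N^{n-1}:(\alpha',t)\in U_k\}$. Each slice is an up-set of $\N^{n-1}$, and the chain $U_k^{(0)}\subseteq U_k^{(1)}\subseteq\cdots$ stabilizes at some $U_k^{(\infty)}$ by the ACC in $\N^{n-1}$ (inductive Dickson). Crucially, $U_i\subseteq U_j$ if and only if $U_i^{(t)}\subseteq U_j^{(t)}$ for every $t$. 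I then combine the inductive hypothesis with a Ramsey-style observation (in a poset without infinite antichains, every infinite sequence contains an infinite totally ordered subsequence) to extract an infinite subsequence along which the $U_k^{(\infty)}$ are monotone. Iterating the extraction at each finite level $t=0,1,2,\ldots$ via a diagonal argument produces a further subsequence along which, for indices $i<j$, the containment $U_{k_j}^{(t)}\subseteq U_{k_i}^{(t)}$ holds at every level $t$, giving $U_{k_j}\subseteq U_{k_i}$ and hence the desired pair of comparable monomial ideals.

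The main obstacle is executing the diagonal extraction carefully enough that a fixed pair of diagonalized indices actually controls the containment at \emph{all} infinitely many slice levels, not just finitely many. The subtlety is that the slice-stabilization index $s_k$ depends on $k$, so one must argue that for any pair $(k_i,k_j)$ produced by the diagonal only finitely many levels $t<\max(s_{k_i},s_{k_j})$ require genuine monotonicity, since for $t$ beyond this bound both slices coincide with $U_k^{(\infty)}$, where monotonicity was secured at the very first stage of the extraction. Combined with the ACC, this allows the diagonal argument to terminate cleanly and deliver the required comparable pair.
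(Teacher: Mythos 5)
First, a remark on scope: the paper does not prove Theorem~\ref{Theorem: Maclagan} at all --- it imports the result verbatim from \cite{Maclagan} and uses it as a black box --- so there is no in-paper proof to compare yours against, and I can only judge your argument on its own terms. Your dictionary between monomial ideals and up-sets of $\N^n$, the base case $n=1$, the stabilization of the slices $U_k^{(0)}\subseteq U_k^{(1)}\subseteq\cdots$ via the ACC, and the Ramsey-type extraction of monotone subsequences from the inductive hypothesis are all correct.

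The gap is exactly at the point you yourself flag as ``the main obstacle,'' and the resolution you sketch does not close it. Fix a pair of diagonal positions $i<j$. The nesting of the refined index sets guarantees $U_{k_j}^{(t)}\subseteq U_{k_i}^{(t)}$ only for levels $t\leq i$, while the first-stage extraction at level $\infty$ covers only the levels $t\geq s_{k_i}$, where $U_{k_i}^{(t)}=U_{k_i}^{(\infty)}\supseteq U_{k_j}^{(\infty)}\supseteq U_{k_j}^{(t)}$. The intermediate levels $i<t<s_{k_i}$ are secured by neither mechanism; that they are \emph{finitely many} does not make them \emph{controlled}. Nor can this be repaired cheaply: the stabilization indices are unbounded over the family (take a family with $s_k=2^k$, say), so you cannot arrange $s_{k_i}\leq i$ along the diagonal, you cannot pre-extract a fixed finite number of levels that covers every pair, and you cannot force the level-$t$ slices of infinitely many later members to lie inside the corresponding slice of one fixed earlier member (Ramsey gives you monotonicity \emph{within} a subsequence, not containment below a prescribed element). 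There is also a structural reason to expect that no rearrangement of this argument succeeds as stated: your inductive step uses nothing about up-sets of $\N^{n-1}$ beyond ``no infinite antichain'' plus the ACC, i.e.\ it treats them as an abstract well-quasi-ordered slice poset. But the analogous assertion for a general well-quasi-order is false --- Rado's example is a wqo whose down-sets, ordered by inclusion, contain an infinite antichain --- so any correct proof must exploit finer structure of $\N^{n-1}$ (as Maclagan's own argument, Nash--Williams-style minimal bad sequences, or bqo theory all do). As written, the proof is incomplete at its decisive step.
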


To apply this theorem in our situation of partially extended fibers
which are atomic w.r.t.~a certain finitely generated monoid $M$, we
introduce the following definition. 

\begin{Definition}
\label{reduction}
Let $A \in \Z^{d \times n}$ and $M = \langle m_1, \ldots, m_t \rangle
\subseteq \Z^d$ a finitely generated monoid.  Let $0 \leq k \leq n$ be fixed.
\begin{enumerate}
\item[(i)]\addvspace{-8pt}
Let $\alpha, \bar{\alpha} \in \Z_+^t$ with 
\begin{displaymath}
b :=\sum\nolimits_{i=1}^{t}{\alpha_i m_i} \quad \text{ and } \quad \bar{b} :=
\sum\nolimits_{i=1}^{t}{\bar{\alpha}_i m_i}.
\end{displaymath}
We say that
$(\bar{\alpha}, Q^{(k)}_{\bar{b}})$ \emph{reduces} $(\alpha, Q^{(k)}_{b})$ 
and denote 
$$(\bar{\alpha}, Q^{(k)}_{\bar{b}}) \trianglelefteq (\alpha, Q^{(k)}_{b})$$
if $\bar{\alpha} \red \alpha$ and $Q^{(k)}_{b} = Q_{\bar{b}}^{(k)}
\oplus Q^{(k)}_{b - \bar{b}}$. In particular: $b -\bar{b} \in M$.
\item[(ii)]
We call a pair $(\alpha, Q^{(k)}_{b})$ \emph{irreducible
w.r.t.~$\trianglelefteq$} if there is no pair
$(\bar{\alpha},Q^{(k)}_{\bar{b}})$ different from $(\alpha,
Q^{(k)}_{b})$ and $(0,Q^{(k)}_{0})$ with 
$$(\bar{\alpha},Q^{(k)}_{\bar{b}}) \trianglelefteq
(\alpha,Q^{(k)}_{b}).$$
\end{enumerate}
\end{Definition}

\begin{Lemma}
\label{finitely many irreducible pairs}
Let $0 \leq k \leq n$ be fixed. Let ${\mathcal A} =
\{(\alpha^1,Q_{b_1}^{(k)}),(\alpha^2,Q_{b_2}^{(k)}), \ldots \}$ be a
set of pairs.
\begin{enumerate}
\item[(i)]\addvspace{-8pt}
Let $(\alpha^i,Q_{b_i}^{(k)}) \ntrianglelefteq (\alpha^j,
Q_{b_j}^{(k)})$ for all $(\alpha^i,Q_{b_i}^{(k)}),
(\alpha^j,Q_{b_j}^{(k)}) \in {\mathcal A}$ with $i < j$. 
Then ${\mathcal A}$ is finite.
\item[(ii)] There are only finitely many pairs $(\alpha,Q_b^{(k)})$
  which are irreducible w.r.t.~$\trianglelefteq$.
\end{enumerate}
\end{Lemma}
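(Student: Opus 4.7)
The plan is to reduce both statements to Maclagan's Theorem~\ref{Theorem: Maclagan} by encoding each pair $(\alpha, Q_b^{(k)})$ as a monomial ideal in a fixed polynomial ring, so that the relation $\trianglelefteq$ becomes reverse inclusion of ideals. Introduce the ring
\[
R := \fieldk[x_1,\ldots,x_k,\, x_{k+1}^{+},x_{k+1}^{-},\ldots,x_n^{+},x_n^{-},\, y_1,\ldots,y_t]
\]
and, for each $z \in \Z_+^k \times \Z^{n-k}$, the monomial
\[
x^z := \prod_{r=1}^{k} x_r^{z_r}\, \prod_{r=k+1}^{n} (x_r^{+})^{\max(z_r,0)}(x_r^{-})^{\max(-z_r,0)}.
\]
A simple case analysis on the sign of each $z_r$ gives $x^v \mid x^z$ in $R$ if and only if $v \red z$. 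Associate to each pair in $\mathcal A$ the monomial ideal
\[
I_i := \bigl\langle y^{\alpha^i} x^z : z \in Q_{b_i}^{(k)} \bigr\rangle \subseteq R,
\]
and the key step is the equivalence
\[
(\alpha^i, Q_{b_i}^{(k)}) \trianglelefteq (\alpha^j, Q_{b_j}^{(k)}) \iff I_j \subseteq I_i.
\]
The forward direction is immediate from the $\oplus$-decomposition: each $z\in Q_{b_j}^{(k)}$ splits as $z = v + w$ with $v \in Q_{b_i}^{(k)}$, $v \red z$, and $\alpha^i \leq \alpha^j$, so $y^{\alpha^i} x^v$ divides $y^{\alpha^j} x^z$. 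The reverse direction runs the same argument backwards: a divisibility witness yields $v$, and setting $w := z-v$ produces an element of $Q_{b_j - b_i}^{(k)}$ with $w \red z$, realising the required decomposition.

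For (i), suppose $\mathcal A$ is infinite, producing an infinite sequence of monomial ideals $I_1, I_2, \ldots$ in $R$. Noetherianity of $R$ forbids an infinite strictly $\subsetneq$-ascending chain, and Theorem~\ref{Theorem: Maclagan} forbids an infinite antichain; together these two facts make reverse inclusion a well-quasi-order on monomial ideals, so some $i<j$ must satisfy $I_i \supseteq I_j$. The key equivalence then yields $(\alpha^i, Q_{b_i}^{(k)}) \trianglelefteq (\alpha^j, Q_{b_j}^{(k)})$, contradicting the hypothesis. Hence $\mathcal A$ is finite.

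Part (ii) follows quickly from (i): enumerate all irreducible pairs and remove $(0, Q_0^{(k)})$ from the list; for any two distinct remaining pairs with indices $i < j$, the relation $(\alpha^i, Q_{b_i}^{(k)}) \trianglelefteq (\alpha^j, Q_{b_j}^{(k)})$ would exhibit a nontrivial reducer of the right-hand pair (distinct from both itself and $(0, Q_0^{(k)})$), contradicting its irreducibility; so the hypothesis of (i) applies. The main obstacle is choosing the encoding carefully: each free coordinate must be split into a positive and negative variable so that $\red$ matches monomial divisibility, and the $y^\alpha$ factor must be attached so that the $\alpha$-component of $\trianglelefteq$ also translates into ideal containment in the correct direction. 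Once that encoding is set up, Maclagan's theorem together with the ACC in $R$ lands the well-quasi-ordering on the correct side.
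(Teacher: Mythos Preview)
Your proof is correct and follows essentially the same route as the paper: encode each pair as a monomial ideal in a fixed polynomial ring (splitting the free coordinates into positive and negative parts and appending the factor $y^{\alpha}$), translate $\trianglelefteq$ into reverse ideal inclusion, and then invoke Theorem~\ref{Theorem: Maclagan}. You are in fact more careful than the paper on one point: the hypothesis of~(i) is only one-directional ($i<j$), so the associated family of ideals need not be an antichain, and your observation that Maclagan's theorem together with the Noetherian ascending chain condition makes reverse inclusion a well-quasi-order is precisely what is required, whereas the paper simply asserts that $\mathcal I$ is an antichain.
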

\begin{proof}
(i):
We associate with a pair $(\alpha^j,Q^{(k)}_{b_j})$ the monomial
  ideal
 \begin{equation*}
\begin{split}
I_{\alpha} &= \langle x^{(z_1, \ldots, z_k, z_{k+1}^+,z_{k+1}^-, \ldots, z_{n}^+, z_{n}^-, \alpha^j_1,
  \ldots, \alpha^j_t)} \colon Az = 
  \sum\limits_{i=1}^{n}{\alpha^j_i m_i} (\;= b_j), \; z \in \Z_+^{k}
  \times \Z^{n-k} \rangle\\
&\subseteq \Q[x_1, \ldots, x_{2n+t}],
\end{split}
\end{equation*}
where $z_i^+ = \mathrm{max }\{0,z_i\}$ and $z_i^- = \mathrm{max
}\{0,-z_i\}$. Then $(\alpha^j,Q_{b_j}^{(k)}) \ntrianglelefteq
(\alpha^l,Q_{b_l}^{(k)})$ if $I_{\alpha^j}$ is not contained in
$I_{\alpha^l}$. Consider the set ${\mathcal I} =
\{I_{\alpha^1},I_{\alpha^2}, \ldots \}$ of ideals associated to the
elements in the set ${\mathcal A}$. The set ${\mathcal I}$ then is an
antichain of ideals and is thus finite according to \ref{Theorem: Maclagan} (see
\cite{Maclagan}). The finiteness of ${\mathcal A}$ follows from the
finiteness of ${\mathcal I}$. \newline
(ii): 
A pair $(\alpha,Q^{(k)}_{b})$ is irreducible
w.r.t.~$\trianglelefteq$ if and only if $(\alpha,Q^{(k)}_{b})
\ntrianglelefteq (\bar{\alpha},Q_{\bar{b}}^{(k)})$  for any $\bar{\alpha} \neq
\alpha$. Let ${\mathcal A} = \{(\alpha^1, Q_{b_1}^{(k)}),
  (\alpha^2,Q_{b_2}^{(k)}), \ldots \}$ be the set of pairs which are
  irreducible w.r.t.~$\trianglelefteq$. Part (i) then yields that
  ${\mathcal A}$ is finite.  \end{proof}

We are now ready to prove Lemma~\ref{reduction and monoid-atomic}.

\begin{proof}[Proof of Lemma~\ref{reduction and monoid-atomic}]
It is sufficient to show: for every $Q^{(k)}_b$ atomic w.r.t.~$M$ there
exists $\alpha \in \Z_+^t$ with $b = \sum\nolimits_{i=1}^{t}{\alpha_im_i}$ such
that $(\alpha,Q^{(k)}_b)$ is irreducible w.r.t.~$\trianglelefteq$. Then
there is an injective mapping from the set of atomic extended fibers
$Q^{(k)}_b$ into the set of irreducible pairs $(\alpha,Q^{(k)}_b)$ and thus there
are only finitely many extended atomic fibers w.r.t.~$M$.

Let $b$ be fixed with $Q^{(k)}_b$ an extended atomic fiber w.r.t.~$M$. 
Let $\alpha \in \Z_+^t$ with $b =
\sum\nolimits_{i=1}^{t}{\alpha_im_i}$ be minimal w.r.t.~$\red$, i.e.,
there is no $\Z_+^t \owns \bar{\alpha} \neq \alpha$ with $\bar{\alpha}
\red \alpha$ and $b = \sum\nolimits_{i=1}^{t}{\bar{\alpha}_im_i}$. We
claim that the pair $(\alpha,Q^{(k)}_b)$ is irreducible
w.r.t.~$\trianglelefteq$. Suppose not. Then there is $(\bar{\alpha},
Q^{(k)}_{\bar b}) \trianglelefteq (\alpha,Q^{(k)}_b)$, i.e., $\bar{\alpha} \red
\alpha$ and $Q^{(k)}_b = Q^{(k)}_{\bar{b}} \oplus Q^{(k)}_{b - \bar{b}}$ implying $b -
\bar{b} \in M$. As $Q^{(k)}_b$ is an
  extended atomic fiber we may w.l.o.g. assume that $\bar{b} = b$ and
  $b - \bar{b} = 0$. Therefore $b =
  \sum\nolimits_{i=1}^{t}{\bar{\alpha}_im_i}$ and as $\alpha$ is
  minimally chosen w.r.t.~$\red$ we have $\bar{\alpha} = \alpha$.
This proves our claim.
\end{proof}

\begin{Example} \label{Example: Atomic Fibers of Twisted Cubic}
In \cite{Adams+Hosten+Loustaunau+Miller:99}, it was shown how
atomic fibers could be used to construct strong SAGBI bases for
monomial subalgebra over principal ideal domains. As an example,
they computed the atomic fibers of the matrix
$A=\left(\begin{smallmatrix} 3 & 2 & 1 & 0\\ 0 & 1 & 2 & 3\\
\end{smallmatrix}\right)$ by hand via an approach different from
the one we present below.

In the table below, we list the right-hand sides and all (finitely
many) elements in these $18$ atomic fibers. 

\[
\begin{array}{ll}
(0,3) & \{(0,0,0,1)\}\\
(1,2) & \{(0,0,1,0)\}\\
(2,1) & \{(0,1,0,0)\}\\
(3,0) & \{(1,0,0,0)\}\\
(2,4) & \{(0,1,0,1),(0,0,2,0)\}\\
(3,3) & \{(1,0,0,1),(0,1,1,0)\}\\
(4,2) & \{(0,2,0,0),(1,0,1,0)\}\\
(3,6) & \{(1,0,0,2),(0,1,1,1),(0,0,3,0)\}\\
(4,5) & \{(0,2,0,1),(0,1,2,0),(1,0,1,1)\}\\
(5,4) & \{(1,1,0,1),(0,2,1,0),(1,0,2,0)\}\\
(6,3) & \{(2,0,0,1),(1,1,1,0),(0,3,0,0)\}\\
(4,8) & \{(0,2,0,2),(1,0,1,2),(0,1,2,1),(0,0,4,0)\}\\
(6,6) & \{(2,0,0,2),(0,3,0,1),(1,1,1,1),(1,0,3,0),(0,2,2,0)\}\\
(8,4) & \{(2,1,0,1),(0,4,0,0),(1,2,1,0),(2,0,2,0)\}\\
(6,9) & \{(2,0,0,3),(0,3,0,2),(1,1,1,2),(1,0,3,1),(0,2,2,1),(0,1,4,0)\}\\
(9,6) & \{(3,0,0,2),(1,3,0,1),(2,1,1,1),(2,0,3,0),(1,2,2,0),(0,4,1,0)\}\\
(6,12) & \{(2,0,0,4),(0,3,0,3),(1,1,1,3),(1,0,3,2),(0,2,2,2),(0,1,4,1),(0,0,6,0)\}\\
(12,6) & \{(4,0,0,2),(2,3,0,1),(3,1,1,1),(3,0,3,0),(2,2,2,0),(0,6,0,0),(1,4,1,0)\}\\
\end{array}
\]
Thus, for example, the fiber given by the right-hand side $(8,7)$
is not atomic, since it can be decomposed into atomic fibers as
\[
P_{\left(\begin{smallmatrix}8\\7\\\end{smallmatrix}\right)}=
P_{\left(\begin{smallmatrix}2\\4\\\end{smallmatrix}\right)}\oplus
P_{\left(\begin{smallmatrix}6\\3\\\end{smallmatrix}\right)}.
\]
This can be quickly verified by looking at the elements in these
fibers:
\begin{eqnarray*}
& &
\{(2,1,0,2),(2,0,2,1),(1,1,3,0),(1,2,1,1),(0,4,0,1),(0,3,2,0)\}\\
& = &
\{(0,1,0,1),(0,0,2,0)\}\oplus\{(2,0,0,1),(1,1,1,0),(0,3,0,0)\}.
\end{eqnarray*}
Indeed, we have
\begin{eqnarray*}
(2,1,0,2) & = & (0,1,0,1)+(2,0,0,1), \\
(2,0,2,1) & = & (0,0,2,0)+(2,0,0,1), \\
(1,1,3,0) & = & (0,0,2,0)+(1,1,1,0), \\
(1,2,1,1) & = & (0,1,0,1)+(1,1,1,0), \\
(0,4,0,1) & = & (0,1,0,1)+(0,3,0,0), \\
(0,3,2,0) & = & (0,0,2,0)+(0,3,0,0).
\end{eqnarray*}
\qed
\end{Example}
In Example \ref{Example: Atomic Fibers of Twisted Cubic} above, it
was easy to verify whether a given fiber is a summand in the
decomposition of another fiber by simply checking the finitely
many elements in the fiber for a decomposition. If the fibers are
not bounded, however, this would not give a finite procedure. The
following lemma tells us how to solve this problem via the
(finitely many!) $\red$-minimal elements in the given fibers.

\begin{Definition}
\label{minimal elements}
Let $A \in \Z^{d \times n}$ and $b \in \Z^d$. Let $0 \leq k \leq l
\leq n$.
\begin{enumerate}
\item[(i)]\addvspace{-8pt}
An element $v \in Q_b^{(k)}$ is called \emph{minimal w.r.t.~$\red_l$}
if there is no $w \in Q_b^{(k)}$ with $v \neq w$ and $w \red_l v$.
\item[(ii)]
We define $z, \tilde{z} \in
Q_b^{(k)}$ to be equivalent if and only if $\pi_l(z) =
\pi_l(\tilde{z})$.
\end{enumerate}
For $l < n$ there are infinitely many
$\sqsubseteq_{l}$-minimal elements in general. Therefore we have to
restrict ourselves to \emph{representatives of equivalence classes} of
$\sqsubseteq_{l}$-minimal elements.  Let $R_{b,l}^{(k)}$
denote a set of representatives of the equivalence classes of
the $\sqsubseteq_{l}$-minimal elements in $Q_{b}^{(k)}$.
Let these representatives be chosen arbitrarily but fixed.
\end{Definition}

\begin{Remark}
\label{finiteness of minimal elements}
Let $A \in \Z^{d \times n}$, $b \in \Z^d$ and $0 \leq k,l \leq
n$. Then the set of representatives of $\red_l$-minimal elements in
$Q^{(k)}_b$, $R_{b,l}^{(k)}$, is finite by the Lemma of Gordan--Dickson
(see for example \cite{Cox-Little-O'Shea:1992}).
\end{Remark}

\begin{Lemma}
\label{minimal elements suffice}
Let $0 \leq k \leq l \leq n$ and let $Q^{(k)}_{b_1}\neq\emptyset$,
$Q^{(k)}_{b_2}\neq\emptyset$. 
Then $Q^{(k)}_{b_1+b_2}=Q^{(k)}_{b_1}\mathop{\oplus}^{(l)} Q^{(k)}_{b_2}$ if and only
if for every $\red_l$-minimal vector $v\in R_{b_1+b_2,l}^{(k)}$ there is
a vector $w\in Q^{(k)}_{b_1}$ with $w\red_l v$.
\end{Lemma}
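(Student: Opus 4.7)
The plan is to dispatch the forward direction by direct unpacking and to spend most of the work on the converse. For the forward direction, if $Q^{(k)}_{b_1+b_2} = Q^{(k)}_{b_1} \koplus{(l)} Q^{(k)}_{b_2}$, then by the definition of $\koplus{(l)}$ every $v \in Q^{(k)}_{b_1+b_2}$, and in particular each representative $v \in R_{b_1+b_2,l}^{(k)}$, admits a decomposition $v = w_1 + w_2$ with $w_i \in Q^{(k)}_{b_i}$ and $w_1, w_2 \red_l v$; the required $w$ is $w_1$.

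For the converse, given an arbitrary $u \in Q^{(k)}_{b_1+b_2}$ the plan is to produce $w_1 \in Q^{(k)}_{b_1}$, $w_2 \in Q^{(k)}_{b_2}$ with $u = w_1 + w_2$ and $w_1, w_2 \red_l u$. First I would pick a $\red_l$-minimal $v \in Q^{(k)}_{b_1+b_2}$ with $v \red_l u$. Such a $v$ exists because the first $l$ coordinates of any $v' \red_l u$ are componentwise bounded in absolute value by those of $u$, while the equivalence relation identifies any two such $v'$ sharing the same first $l$ coordinates; hence only finitely many equivalence classes lie below $u$, and any $\red_l$-minimal one among them is automatically $\red_l$-minimal in all of $Q^{(k)}_{b_1+b_2}$ by transitivity. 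Next, let $\tilde v \in R_{b_1+b_2,l}^{(k)}$ be the representative of the equivalence class of $v$; since $\pi_l(\tilde v) = \pi_l(v)$ and $\red_l$ depends only on the first $l$ coordinates, the hypothesis supplies $w_1 \in Q^{(k)}_{b_1}$ with $w_1 \red_l \tilde v$, hence $w_1 \red_l v \red_l u$.

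Finally, I would set $w_2 := u - w_1$ and verify the two required properties. The equation $A w_2 = b_2$ is immediate; non-negativity of the first $k$ components of $w_2$ follows from $k \leq l$ together with $w_1 \red_l u$, which forces $0 \leq (w_1)_i \leq u_i$ for $i \leq k$; and the relation $w_2 \red_l u$ follows from a short sign-and-magnitude check, using that $(w_1)_i$ and $u_i$ have the same sign with $|(w_1)_i| \leq |u_i|$ for $i \leq l$.

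The only step I expect to require care---rather than pure definition chasing---is the translation between an arbitrary $\red_l$-minimal element and the fixed representative in $R_{b_1+b_2,l}^{(k)}$, which must be justified by observing that $\red_l$ is blind to coordinates beyond $l$. Everything else is mechanical unpacking of Definitions~\ref{partial fibers} and~\ref{minimal elements}.
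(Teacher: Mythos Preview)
Your proof is correct and follows the same overall strategy as the paper: pass from an arbitrary element to a $\red_l$-minimal one below it, invoke the hypothesis to obtain an element of $Q^{(k)}_{b_1}$, and check that the resulting decomposition has the required properties. The difference is in the decomposition itself. The paper, writing $v$ for the arbitrary element, $\bar v$ for the minimal representative, and $\bar w$ for the element of $Q^{(k)}_{b_1}$ with $\bar w \red_l \bar v$, splits $v = (\bar w + v - \bar v) + (\bar v - \bar w)$; this forces it to check separately that $\bar w + v - \bar v \in Q^{(k)}_{b_1}$ and that $\bar w + v - \bar v \red_l v$, the latter via an orthant argument about $\pi_l(\bar w)$ and $\pi_l(v - \bar v)$. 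Your decomposition $u = w_1 + (u - w_1)$ is cleaner: once you have $w_1 \red_l u$ (via transitivity of $\red_l$ and the observation that $\red_l$ depends only on the first $l$ coordinates, so $w_1 \red_l \tilde v$ gives $w_1 \red_l v$), both pieces are immediate. You are also more careful than the paper about explaining why a representative in $R^{(k)}_{b_1+b_2,l}$ can actually be found below any given $u$.
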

\begin{proof}  Let $v \in Q_{b_1+b_2}^{(k)}$. Then there is $\bar{v}\in
R^{(k)}_{b_1+b_2,l}$ with
$\bar{v}\red_l v$. Thus, by the assumption in the lemma, there is
some $\bar{w}\in Q^{(k)}_{b_1}$ such that $\bar{w}\red_l\bar{v} \red_l
v$. 
As $k \leq l$ we have $\bar{v} - \bar{w} \in \Z_+^k \times \Z^{n-k}$
and thus $\bar{v}-\bar{w}\in Q^{(k)}_{b_2}$ with
$\bar{v}-\bar{w}\red_l\bar{v} \red_l v$.

We now claim that $v=(\bar{w}+v-\bar{v})+(\bar{v}-\bar{w})$ with
$\bar{w}+v-\bar{v}\in Q^{(k)}_{b_1}$, $\bar{v}-\bar{w}\in
Q^{(k)}_{b_2}$, $\bar{w}+v-\bar{v}\red_l v$, and $\bar{v}-\bar{w}\red_l
v$, is a desired representation of $v$. The first two relations
are trivial, if we keep in mind that $Av=A\bar{v}=b$,
$A\bar{w}=b_1$, $b=b_1+b_2$ and $k \leq l$. We get the other two relations as
follows:
\begin{itemize}
\item [(a)] $\bar{w}+v-\bar{v}\red_l \bar{v}+v-\bar{v}=v$, since by
construction $\pi_l(\bar{w})$ and $\pi_l(v-\bar{v})$ lie in the same orthant,
and
\item [(b)] $\bar{v}-\bar{w}\red_l \bar{v}\red_l v$, since
$\bar{w}\red_l\bar{v}$.
\end{itemize}

Thus, we have constructed for arbitrary $v\in Q^{(k)}_{b_1+b_2}$ a
valid representation of $v$ as a sum of two elements from
$Q^{(k)}_{b_1}$ and $Q^{(k)}_{b_2}$ whose projection onto the first
$l$ components lie in the same orthant as the projection of
$v$ onto its first $l$ components. This concludes the proof. \end{proof}

Using this lemma repeatedly, we are now able to find, for a given
right-hand side $b \in M$, a decomposition
$Q_{b}^{(k)}=\bigoplus_{i=1}^{s}\alpha_iQ_{b_i}^{(k)}$,
$\alpha_i\in\Z_+$, that is, we can find a decomposition of a partially
extended
fiber into a sum of partially extended fibers which are atomic
w.r.t.~the monoid $M$.

\begin{algorithm} 
\caption{Algorithm to decompose extended fibers into sums of extended atomic
fibers}
\label{Algorithm to decompose fibers into sums of atomic fibers}
\begin{algorithmic}[1]
\REQUIRE{$A$, right-hand sides $\{b_1,\ldots,b_s\}$ of the set of
  extended atomic fibers $E^{(k)}(A,M)$}
\ENSURE{ $\alpha_1,\ldots,\alpha_s$ such that
$Q^{(k)}_{b}=\bigoplus\limits_{i=1}^{s} \alpha_iQ^{(k)}_{b_i}$}

\STATE $\alpha_1:=\ldots:=\alpha_s:=0$
\FOR{$ i = 1$ to $s$} %{$ k $ }
\WHILE{$Q_{b}^{(k)}=Q_{b_i}^{(k)}\oplus Q_{b-b_i}^{(k)}$ and $b-b_i \in M$} 
\STATE $b:=b-b_i$
\STATE $\alpha_i:=\alpha_i+1$
\ENDWHILE
\ENDFOR
\STATE \textbf{return: } $\alpha_1,\ldots,\alpha_s$.
\end{algorithmic}
\end{algorithm}

It remains to state an algorithm that computes the finitely many
$\red$-minimal elements in $Q^{(k)}_{b}$ for fixed $k$.
We will do this in the following paragraphs.

%As one subproblem in the computation of (extended) atomic fibers,
We
have to find for some $l\in\{1,\ldots,n\}$ and some
$k\in\{1,\ldots,l\}$ all $\red_l$-minimal elements in (projections
of) fibers of the form
\[
\pi_l(Q_b^{(k)}) = \{(x,y)\in\Z_+^{k}\times\Z^{(l-k)}:\exists\; z\in\Z^{(n-l)} \text{
with } A(x,y,z)=b \}.
\]
If $b=0$, then $0$ is the only $\red_l$-minimal element. If not, we
reduce this problem to the problem of finding a Hilbert basis of a
cone. It is not hard to show that all $\red_l$-minimal elements
$(x,y,z)$ correspond to the elements $(x,y^+,y^-,z,1)$ in a Hilbert
basis of the cone
\[
\{((x,y^+,y^-,z,u)\in\Z^{n+(l-k)+1}:A(x,y^+-y^-,z)-bu=0,
x,y^+,y^-,u\geq 0\}.
\]
In general, this is not a pointed rational polyhedral cone (and thus
need not have a unique inclusion-minimal Hilbert basis), since there
can be linear relations among the (free) variables $z$. However,
projected onto the space of the variables $x,y^+,y^-,u$, the
nonnegativity constraints lead to a pointed rational polyhedral cone
that possesses a unique inclusion-minimal Hilbert basis. Such a
minimal Hilbert basis can be computed for example with {\tt 4ti2}
(see \citealp{4ti2}).

Note that the splitting of $y$ into $y^+$ and $y^-$ is only used for
exposition here. In practice, one can directly use $y$ when
computing the $\red_l$-minimal elements, see \citet[Section
2.6]{Ray:Habil} for more details.

\section{Computation of (Extended) Atomic Fibers}
\label{Section: Computation of Atomic Fibers}

In the following we show how to compute the finitely many
(extended) atomic fibers of a matrix $A \in \Z^{m \times n}$ w.r.t.~a
lattice $\Lambda$. In this section we will present a simple algorithm;
we will give a more complex and much more efficient algorithm in the following
sections. Both algorithms use the algorithmic pattern of a completion procedure. 

We will
denote the columns of matrix $A$ by $A_1, \ldots, A_n \in
\Z^m$. Note that the function $\textrm{normal form}(s,G)$ in Algorithm
\ref{Algorithm to compute atomic fibers} stems from Algorithm
\ref{Normal form algorithm}. 

\begin{algorithm}
\caption{Algorithm to compute extended atomic fibers}
\label{Algorithm to compute atomic fibers}
\begin{algorithmic}[1]
\REQUIRE{ $F:=\{\pm b_1, \ldots, \pm b_s\}$ with $\langle b_1, \ldots,
  b_s \rangle = \Lambda \cap A\Z^n$}
\ENSURE{a set $G$, such that $\{Q_{b}:b\in G\}$ contains
all extended fibers of $A$ which are atomic w.r.t.~$\Lambda$}
\STATE $G:=F$

\STATE 
$C:=\bigcup\limits_{f,g\in G}\{f+g\}$ \hspace{2.5cm} (forming S-vectors)
\WHILE{$C\neq \emptyset $} 

\STATE $s:=$ an element in $C$

\STATE $C:=C\setminus\{s\}$

\STATE $f:=\textrm{normal form }(s,G)$

\IF{ $f\neq 0$} 

\STATE$G:=G\cup \{f\}$

\STATE $C:=C\cup\bigcup\nolimits_{g\in G} \{f+g\}$ \hspace{1.5cm}
(adding S-vectors)
\ENDIF
\ENDWHILE
\STATE $G:=G\cup\{0\}$
\STATE \textbf{return: } $G$.
\end{algorithmic}
\end{algorithm}

\begin{algorithm}
\caption{Normal form algorithm}
\label{Normal form algorithm}
\begin{algorithmic}[1]
\REQUIRE{ $s$, $G$}

\ENSURE{a normal form of $s$ with respect to $G$}

\WHILE{there is some $g\in G$ such that
$Q_{s}=Q_{g}\oplus Q_{s-g}$} 
\STATE $s:=s-g$
\ENDWHILE
\STATE \textbf{return: } $s$
\end{algorithmic}
\end{algorithm}

\begin{Lemma} \label{Lemma: Algorithm to compute extended atomic fibers
terminates and is correct} Algorithm \ref{Algorithm to compute
atomic fibers} terminates and computes a set $G$ such that
$\{Q^I_{A,b}:b\in G\}$ contains all atomic fibers of $A$.
\end{Lemma}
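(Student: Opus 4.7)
The plan is to prove the lemma by splitting it into termination and correctness, treating each independently.

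For termination, the key observation is that every element $f_j$ appended to $G$ at some iteration $j$ is in normal form with respect to the set $G_{j-1}$ existing at that moment: no $g \in G_{j-1}$ satisfies $Q_{f_j} = Q_g \oplus Q_{f_j-g}$. In particular, for every $i<j$ we have $Q_{f_j}\neq Q_{f_i}\oplus Q_{f_j-f_i}$. Imitating the construction in the proof of Lemma~\ref{finitely many irreducible pairs}, I would associate to each $f_j$ a monomial ideal whose generators encode the elements of the extended fiber $Q_{f_j}$ (splitting each coordinate into positive and negative parts, since these are extended fibers) together with a factor recording $f_j$ itself. The irreducibility condition above translates into the non-containment $I_{f_i}\not\subseteq I_{f_j}$, so the sequence of ideals forms an antichain. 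By Maclagan's Theorem~\ref{Theorem: Maclagan} this antichain must be finite, hence only finitely many new elements are ever added to $G$, and the main loop terminates.

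For correctness, I would first note that at termination every S\nobreakdash-vector $f+g$ (for $f,g\in G$) reduces to $0$. Unfolding the normal form procedure, this means there is a chain $s_0=f+g$, $s_i=s_{i-1}-h_i$ with $h_i\in G$, ending in $s_k=0$, and at each step $Q_{s_{i-1}}=Q_{h_i}\oplus Q_{s_i}$; chaining these identities yields $Q_{f+g}=Q_{h_1}\oplus\cdots\oplus Q_{h_k}$ with $f+g=\sum_i h_i$ and $h_i\in G$. Now let $Q_b$ be atomic w.r.t.~$\Lambda$. Since $F\subseteq G$ generates $\Lambda\cap A\Z^n$, write $b=c_1+\cdots+c_m$ with $c_i\in F$. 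By induction on $m$, applying the above S-vector property to partial sums $c_1+c_2$ (which are themselves S-vectors and therefore decompose into $G$-summands), I would obtain a representation $b=g_1+\cdots+g_N$ with $g_i\in G$ and $Q_b=\bigoplus_{i=1}^N Q_{g_i}$. Atomicity of $Q_b$ forces $N=1$, so $b=g_1\in G$, giving the desired conclusion.

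The main obstacle is the faithful translation between the fiber-decomposition condition and monomial-ideal containment required by Maclagan's theorem; the encoding of Lemma~\ref{finitely many irreducible pairs} uses an auxiliary multiplicity vector $\alpha$ in the monoid, and here one needs an analogous encoding that records both the (extended) fiber and its right-hand side label so that irreducibility w.r.t.~$G$ really becomes non-containment. A secondary subtlety is the inductive step of the correctness argument: one must chain S-vector reductions consistently through a long sum, and the symmetric choice $F=\{\pm b_1,\ldots,\pm b_s\}$ (closed under negation) is what guarantees that all intermediate partial sums lie in $\Lambda\cap A\Z^n$ so that the fibers $Q_{s_i}$ appearing in the chain are well-defined.
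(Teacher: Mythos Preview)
Your termination argument is essentially the paper's, and your worry about encoding the right-hand side label is unnecessary: the paper simply sets $I_{A,b}=\langle x^{(z^+,z^-)}:Az=b\rangle\subseteq\Q[x_1,\dots,x_{2n}]$, and one checks directly that $Q_{f_j}=Q_{f_i}\oplus Q_{f_j-f_i}$ is equivalent to $I_{A,f_j}\subseteq I_{A,f_i}$ (for extended fibers, $z-v\in Q_{f_j-f_i}$ automatically once $v\sqsubseteq z$). No auxiliary $\alpha$-tag is needed.

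The correctness argument, however, has a real gap. Your induction on $m$ cannot produce an $\oplus$-decomposition of $Q_b$. Suppose inductively that $Q_{c_1+\cdots+c_{m-1}}=\bigoplus_j Q_{g_j}$ with $g_j\in G$. Passing to $b=c_1+\cdots+c_m$ you only know $Q_b=Q_{c_1+\cdots+c_{m-1}}+Q_{c_m}$ as an \emph{ordinary} Minkowski sum; for $z\in Q_b$ you can split $z=u+v$ with $u\in Q_{c_1+\cdots+c_{m-1}}$ and $v\in Q_{c_m}$, but there is no control on the sign patterns of $u,v$ relative to $z$. Even if you then write $u=\sum w_j$ with $w_j\sqsubseteq u$, you get $w_j\sqsubseteq u$, not $w_j\sqsubseteq z$. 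Reducing the single S-vector $g_N+c_m$ does not help either: the resulting $\oplus$-decomposition of $Q_{g_N+c_m}$ says nothing about how $Q_b$ itself decomposes. The orthant-wise sum $\oplus$ simply does not chain through ordinary $+$ in the way your induction requires.

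The paper avoids this by a descent argument rather than induction. One fixes the finitely many $\sqsubseteq$-minimal elements $z_1,\dots,z_k$ of $Q_{\bar b}$, considers \emph{all} ordinary-sum representations $Q_{\bar b}=\sum_{j\in J}Q_{b_j}$ with $b_j\in G\setminus\{0\}$ together with choices $z_i=\sum_j v_{i,j}$, and picks one minimizing $\sum_{i,j}\|v_{i,j}\|_1$. If this minimum is not already an $\oplus$-representation, some $v_{i_0,j_1}$ and $v_{i_0,j_2}$ have opposite sign in a component; the S-vector closure of $G$ then lets one replace $Q_{b_{j_1}}+Q_{b_{j_2}}$ by a sign-consistent sum and strictly decrease $\sum_{i,j}\|v_{i,j}\|_1$, a contradiction. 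This norm-minimization step is the missing idea in your outline.
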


\begin{proof}  Associate with $b \in \Lambda$ the monomial ideal
$I_{A,b}:=\langle x^{(z^+,z^-)}:Az=b,z\in\Z^n\rangle\subseteq
\Q[x_1,\ldots,x_{2n}]$, where $(z^+)^j = \mathrm{max }(0,z^j)$ and
$(z^-)^j = \mathrm{max }(0,-z^j)$ for all components $j=1, \ldots,
n$. Algorithm \ref{Algorithm to compute atomic 
fibers} generates a sequence $\{f_1,f_2,\ldots\}$ in $G\setminus
F$ such that $Q_{f_j}\neq Q_{f_i}\oplus Q_{f_j-f_i}$
whenever $i<j$. Thus, the corresponding sequence
$\{I_{A,f_1},I_{A,f_2},\ldots\}$ of monomial ideals satisfies
$I_{A,f_j}\nsubseteq I_{A,f_i}$ whenever $i<j$. We conclude, by
Theorem \ref{Theorem: Maclagan} given by \cite{Maclagan}, that this sequence of monomial
ideals must be finite and thus, Algorithm \ref{Algorithm to compute atomic
fibers} must terminate.

It remains to prove correctness. For this, let $G$ denote the set
that is returned by Algorithm \ref{Algorithm to compute atomic
fibers}. Moreover, let $Q_{\bar{b}}$ be an extended atomic
fiber of $A$ with $\bar{b}\neq 0$. We will show that $\bar{b}\in
G$.

Since $F\setminus \{0\} \subseteq G\setminus\{0\}$, we know that
$Q_{\bar{b}}=\sum Q_{b_j}$ for finitely many (not
necessarily distinct) $b_j\in G\setminus\{0\}$. This implies in
particular, that every $z\in Q_{\bar{b}}$ can be written as a
sum $z=\sum v_j$ with $v_j\in Q_{b_j}$. We will show that we
can find vectors $b_j\in G$ such that every $z\in Q_{\bar{b}}$ can be
written as a sum $z=\sum v_j$ with $v_j\in Q_{b_j}$ and $v_j\red
z$. This implies 
$Q_{\bar{b}}=\bigoplus Q_{b_j}$. Since $Q_{\bar{b}}$
is atomic, and thus indecomposable, this representation must be
trivial, that is, it has to be $Q_{\bar{b}}=Q_{\bar{b}}$,
and therefore we conclude $\bar{b}\in G$.

With Lemma \ref{minimal elements suffice} it is sufficient to
consider the $\red$-minimal elements in $Q_{\bar{b}}$,
$R_{\bar{b},n}^{(0)}=\{z_1,\ldots,z_k\}$, to decide if it decomposes
w.r.t.~$\oplus$.
From all representations $Q_{\bar{b}}=\sum_{j\in J}
Q_{b_j}$ with $b_j\in G\setminus\{0\}$ choose a representation and
elements $v_{i,j} \in Q_{b_j}$ with $z_i=\sum_{j\in J} v_{i,j}$ 
$i=1,\ldots,k$, such that the sum
\begin{equation}\label{sum}
\sum_{i=1}^k\sum_{j\in J}\|v_{i,j}\|_1
\end{equation}
is minimal. By the triangle inequality we have that
\begin{equation}\label{triangle inequality}
\sum_{i=1}^k\sum_{j\in J}\|v_{i,j}\|_1\geq\sum_{i=1}^k \|z_i\|_1.
\end{equation}
Herein, equality holds if and only if all $v_{i,j}$ have the same
sign pattern as $z_i$, $i=1,\ldots,k$, that is, if and only if we
have $v_{i,j}\red z_i$ for all $i$ and all $j$. Thus, if we have
equality in \eqref{triangle inequality} for such a minimal
representation $Q_{\bar{b}}=\sum_{j\in J} Q_{b_j}$, then
$v_{i,j}\in Q_{b_j}$ and $v_{i,j}\red z_i$ for all occurring
$v_{i,j}$, and we are done.

(It should be noted that we have required $b_j\in G\setminus\{0\}$
for all appearing $b_j$, that is in particular, $b_j\neq 0$. Those
$b_j$ will be sufficient to generate all $\red$-minimal elements
in the extended fiber $Q_{\bar{b}}$. We get the remaining elements in
$Q_{\bar{b}}$ by adding elements from $Q_{0}$.)

Therefore, let us assume that
\begin{equation}\label{inequality}
\sum_{i=1}^k\sum_{j\in J}\|v_{i,j}\|_1 > \sum_{i=1}^k \|z_i\|_1.
\end{equation}
In the following we construct a new representation
$Q_{\bar{b}}=\sum_{j'\in J'} Q_{b_j'}$ and elements $v'_{i,j}$ whose corresponding
sum \eqref{sum} is smaller than the minimally chosen sum. This
contradiction proves that we have indeed equality in
\eqref{triangle inequality} and our claim is proved.

From \eqref{inequality} we conclude that there are indices
$i_0,j_1,j_2$ and a component $m\in\{1,\ldots,n\}$ such that
$v_{i_0,j_1}^{(m)}\cdot v_{i_0,j_2}^{(m)}<0$. As $b_{j_1}, b_{j_2} \in
G$, the sum $b_{j_1} + b_{j_2}$ has been built and the extended fiber
$Q_{b_{j_1}+b_{j_2}}$ has either been reduced to $Q_{0}$ by sets
$Q_{b_{j''}}$, $j''\in J''$, during the Algorithm \ref{Normal form
  algorithm} or $b_{j_1}+b_{j_2}$ has been added to $G$. In the latter
case we set $J'':=\{j''\}$ with $b_{j''} := b_{j_1}+b_{j_2}$. This
gives representations   
\[
v_{i,j_1}+v_{i,j_2}=\sum_{j''\in J''} w_{i,j''} \;\text{ with } \; w_{i,j''}\in
Q_{b_{j''}} \; \text{ and } \; w_{i,j''}\red v_{i,j_1}+v_{i,j_2}
\]
for $i=1,\ldots,k$. As all $w_{i,j''}$ lie in the same orthant of
$\R^n$ as $v_{i,j_1}+v_{i,j_2}$, we get
\[
\biggl \lVert \sum_{j''\in J''} w_{i,j''}\biggr \rVert_1 = \|v_{i,j_1}+v_{i,j_2}\|_1 \leq
\|v_{i,j_1}\|_1+\|v_{i,j_2}\|_1,
\]
with strict inequality for $i=i_0$.

Thus, replacing in $Q_{\bar{b}}=\sum_{j\in J} Q_{b_j}$ the
term $Q_{b_{j_1}}+Q_{b_{j_2}}$ by $\sum_{j''\in J''}
Q_{b_{j''}}$, we arrive at a new representation
$Q_{\bar{b}}=\sum_{j'\in J'} Q_{b_{j'}}$ whose
corresponding sum \eqref{sum} is at most
\[
\sum_{i=1}^k\sum_{j'\in J'}\|v_{i,j'}\|_1 < \sum_{i=1}^k\sum_{j\in
J}\|v_{i,j}\|_1,
\]
contradicting the minimality of the representation
$Q_{\bar{b}}=\sum_{j\in J} Q_{b_j}$. This concludes the
proof. 
\end{proof}

\begin{Remark}
\label{Extended fibers on a lattice}
One may of course use Algorithm \ref{Algorithm to compute atomic
  fibers} also for the problem of finding indecomposable extended
fibers among the elements in the family of 
extended fibers $Q_{b}=\{z:Az=b,z\in\Z^n\}$ where $A$ is kept
fixed and where $b$ is allowed to vary on the lattice which is spanned
by the columns of matrix $A$. The input set then becomes $F = \{\pm
A_1, \ldots, \pm A_n\}$.
\end{Remark}

Having an algorithm available that computes all extended atomic
fibers w.r.t.~a given lattice $\Lambda$, we can use it to
compute partially extended atomic fibers w.r.t.~$\oplus$ and
$\Lambda$: If  
$Q^{(k)}_{b}$ is atomic then so is $Q_{b}$, as any decomposition
of $Q_{b}$, restricted to $\Z^k_+ \times \Z^{n-k}$, would give
a decomposition of $Q^{(k)}_{b}$. This way of computing partially
extended atomic fibers of
a given matrix $A \in \Z^{m \times n}$ is illustrated in Figure
\ref{completion-procedure and sorting out} and formalized in
Algorithm \ref{Dropping reducible}.

\begin{algorithm}[ht]
\caption{Computing partially extended atomic fibers}
\label{Dropping reducible}
\begin{algorithmic}[1]
\REQUIRE{$F:=\{\pm b_1, \ldots, \pm b_s\}$ with $\langle b_1, \ldots,
  b_s \rangle = \Lambda \cap A\Z^n$, $k \in \Z_+$}
\ENSURE{A set $G^*$ such that $\{Q_b^{(k)} \colon b \in G^* \}$
contains all partially extended fibers of order $k$ which are atomic
w.r.t.~$\oplus$ and $\Lambda$}
\STATE Apply Algorithm \ref{Algorithm to compute atomic fibers} to
the set $F$. Let $G$ denote the output.
\STATE $G^*:= \emptyset$.
\FOR{$b \in G$ with $Q_b^{(k)} \neq \emptyset$}
\IF{$Q_b^{(k)} \neq Q_{g}^{(k)} \oplus Q_{b-g}^{(k)}$ for all $g \neq
  b \in G$}
\STATE{$G^* := G^*  \cup  \{b\}$}
\ENDIF
\ENDFOR
\STATE \textbf{return } $G^*$
\end{algorithmic}
\end{algorithm}

\begin{figure}[ht]
\centering
\ifpdf
    \input{normaler-algorithmus.pdf_t}
    \else
    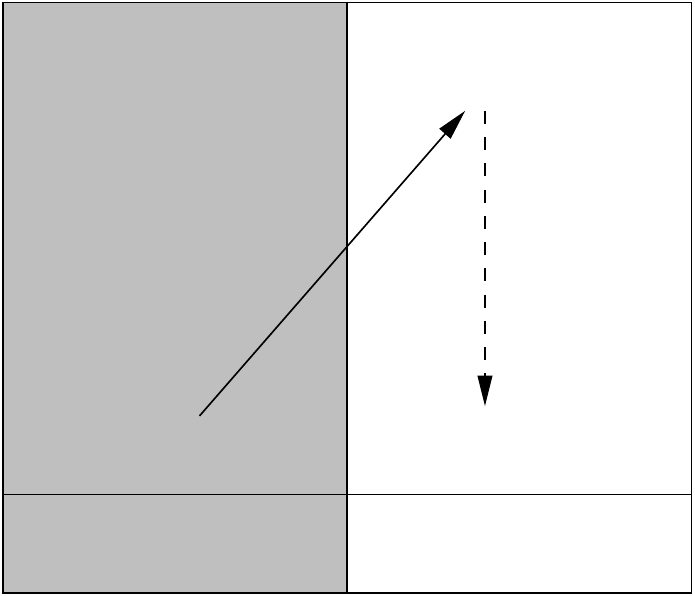
    \fi
\caption{Computing (partially extended) atomic fibers via extended atomic fibers}
\label{completion-procedure and sorting out}
\end{figure}

The solid arrow from the bottom up in Figure \ref{completion-procedure
  and sorting out} stands for the 
completion procedure which is given by Algorithm \ref{Algorithm to
  compute atomic fibers}. The dashed arrow from the top to 
the bottom illustrates the procedure of intersecting the extended
atomic fibers with $\Z^k_+ \times \Z^{n-k}$ and dropping the reducible (or
empty) fibers afterwards.

Being given the atomic fibers $P_b$ of a matrix $A$ it is easy to
compute the atomic fibers $\tilde{P}_b$ which have been defined in
\cite{Adams+Hosten+Loustaunau+Miller:99}. Recall that
$\tilde{P}_b := \conv \{z \,:\, Az=b \; z \in \Z^n_+\}$ and that
$\tilde{P}_b$ is said to be atomic if each decomposition $\tilde{P}=
\tilde{P}_{b_1}+\tilde{P}_{b_2}$ implies either $b= b_1$ or $b=b_2$.

\begin{Lemma}
\label{convex hulls and atomic fibers}
If $\tilde{P}_b$ is an atomic fiber of the matrix $A$ then $P_b$ is
atomic, too.
\end{Lemma}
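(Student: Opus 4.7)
The plan is to prove the contrapositive: if $P_b$ is not atomic, then neither is $\tilde P_b$. So I would assume there exist $b_1,b_2 \in \Z^d$, both different from $b$, with $P_{b_1}, P_{b_2} \neq \emptyset$ and $P_b = P_{b_1} + P_{b_2}$ (set-theoretic Minkowski sum of lattice-point sets). From $A(z_1+z_2) = Az_1 + Az_2$ for any $z_i \in P_{b_i}$, one automatically has $b = b_1 + b_2$.

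The central tool I would use is the elementary identity
\begin{displaymath}
\conv(X+Y) = \conv(X) + \conv(Y)
\end{displaymath}
valid for any nonempty subsets $X,Y \subseteq \R^n$; both inclusions are one-line verifications using convex combinations. Applying this to $X = P_{b_1}$ and $Y = P_{b_2}$ and recalling $\tilde P_b = \conv P_b$ by definition, I obtain
\begin{displaymath}
\tilde P_b \;=\; \conv(P_{b_1} + P_{b_2}) \;=\; \conv(P_{b_1}) + \conv(P_{b_2}) \;=\; \tilde P_{b_1} + \tilde P_{b_2}.
\end{displaymath}
Since $b_1 \neq b$ and $b_2 \neq b$, this exhibits a nontrivial decomposition of $\tilde P_b$, so $\tilde P_b$ is not atomic in the sense of \cite{Adams+Hosten+Loustaunau+Miller:99}.

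The only part that requires a moment of care is reconciling the two slightly different ways of spelling out ``$\tilde P_b = \tilde P_{b_1} + \tilde P_{b_2}$'': as a polytope identity versus as the vertex-wise decomposition described in the excerpt. But these are equivalent by the standard Minkowski-sum fact that every vertex of a sum $P = Q_1 + Q_2$ of polytopes decomposes uniquely as a sum of vertices of $Q_1$ and $Q_2$ (take a linear functional that is uniquely maximized at a given vertex of $P$; it is then uniquely maximized at vertices of each summand). I would cite this as a well-known fact rather than reprove it.

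I do not anticipate a serious obstacle; the main thing to watch is the nonemptiness hypothesis so that $\conv(X+Y) = \conv X + \conv Y$ actually applies, and making explicit that $b_1,b_2\ne b$ (hence the decomposition of $\tilde P_b$ is nontrivial) is inherited directly from the assumed nontrivial decomposition of $P_b$.
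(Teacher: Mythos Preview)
Your proof is correct and follows essentially the same route as the paper: prove the contrapositive by applying $\conv(X+Y)=\conv(X)+\conv(Y)$ to a nontrivial decomposition $P_b=P_{b_1}+P_{b_2}$ to obtain $\tilde P_b=\tilde P_{b_1}+\tilde P_{b_2}$. The extra paragraph on the vertex-wise reading of the Minkowski sum is a nice sanity check but not strictly needed, since the operative definition of atomicity for $\tilde P_b$ used here is the polytope identity $\tilde P_b=\tilde P_{b_1}+\tilde P_{b_2}$.
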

\begin{proof}
Suppose $P_b = P_{b_1} + P_{b_2}$ (and $b_1, b_2 \neq 0$). Then we
have: $\tilde{P}_b = \conv (P_b) = \conv (P_{b_1}+P_{b_2}) =
\conv(P_{b_1}) + \conv(P_{b_2}) = \tilde{P}_{b_1} + \tilde{P}_{b_2}$
which is a contradiction.
\end{proof}

Lemma \ref{convex hulls and atomic fibers} enables us to compute the
atomic fibers $\tilde{P}_b$ via Algorithm \ref{compute convex atomic
  fibers}. 

\begin{algorithm}[ht]
\caption{Computing the atomic fibers $\tilde{P}_b$}
\label{compute convex atomic fibers}
\begin{algorithmic}[1]
\REQUIRE{$F:=\{b_1, \ldots, b_s\}$ with $P_{b_i}$ is an
  atomic fiber}
\ENSURE{A set $G = \{\bar{b}_1, \ldots, \bar{b}_t\}$ such that
  $\tilde{P}_{\bar{b}_i}$ is an atomic fiber.}
\STATE Set $G:= \emptyset$.
\FORALL {$b \in F$}
\IF{$\tilde{P}_b \neq \tilde{P}_g + \tilde{P}_{b -g}$ for all $b \neq
  g \in F$}
\STATE $G:= G \cup \{b\}$
\ENDIF
\ENDFOR
\STATE \textbf{return } $G$
\end{algorithmic}
\end{algorithm}

\section{Preliminaries of the project-and-lift algorithm}
\label{Section: Preliminaries of the project-and-lift algorithm}

The way of computing atomic
fibers presented in section \ref{Section: Computation of Atomic
  Fibers}, however, is pretty slow, since there are far more extended
atomic fibers than atomic fibers. A similar behavior can be observed
when one extracts the Hilbert basis of the cone
$\{x:Ax=0,x\in\R^n_+\}$ from the Graver basis of $A$, as the
Graver basis is usually much bigger than the Hilbert basis one is
interested in.
\cite{Hemmecke:2003} showed that one can reduce this
difference in sizes by a project-and-lift algorithm.
With this algorithm,
bigger Hilbert bases, even with more than $500{,}000$ elements, can
be computed nowadays.

In this section and in the following one, we will present a similar
algorithm to compute the atomic fibers of a given matrix $A \in \Z^{d
  \times n}$ which is
significantly faster than Algorithm \ref{Dropping reducible}. This
algorithm puts us in the position
to compute not only the atomic fibers of a matrix but the atomic
fibers w.r.t.~an arbitrary (finitely generated monoid) $M$, i.e., 
the right-hand side $b$ is only allowed to vary in this monoid.
During the algorithm we consider partially extended fibers
$Q_{b}^{(k-1)} = \{z \in \Z^{k-1}_+ \times \Z^{n-k+1} \colon Az = b\}$ with
varying $b \in M$ w.r.t.~$k$-restricted Minkowski-sums.

Let $M = \langle m_1, \ldots, m_t \rangle \subseteq \Z^d$ be a
finitely generated 
monoid and let $A \in \Z^{d \times n}$ be a matrix. We want to
compute the atomic fibers of matrix $A$ w.r.t.~the monoid $M$. The
algorithm proceeds in $n$ individual steps. The $k$-th step is
illustrated in Figure \ref{k-th step of project-and-lift}.

\begin{figure}[ht]
\centering
\ifpdf
    \input{k-th_step.pdf_t}
    \else
    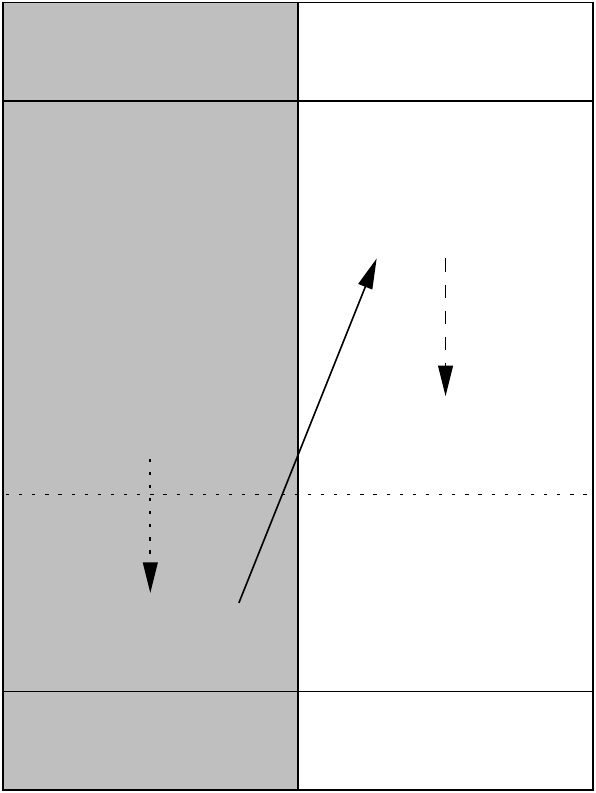
    \fi
\caption{The $k$-th step of the project-and-lift algorithm}
\label{k-th step of project-and-lift}
\end{figure}

The $k$-th lifting step follows the arrows in the figure. 
It starts by performing a ``preprocessing step'' in which the
input set is prepared for the main part of this lifting step. This
process is illustrated by the dotted arrow and will be explained in
more detail in section \ref{preprocessing the input set}.

The $k$-th lifting step continues as follows: it performs a
completion step similar to the one we presented in Algorithm
\ref{Algorithm to compute atomic fibers}, which is illustrated by the
solid arrow going from the bottom up. This
step will be explained in more detail in section \ref{the completion
  procedure}. 

The dashed arrow, finally, stands for a step where we drop all
elements of the fibers having a negative $k$-th component. 
It might
happen that an atomic partially extended fiber becomes empty or
reducible when processing this last step. Therefore we have to
perform another reducibility test. The details of this subroutine
will be given in section 
\ref{the sorting and reducing step}. 

Having performed the $k$-th lifting
step we continue performing the $(k+1)$-st lifting step. The whole
project-and-lift algorithm is illustrated in Figure 
\ref{project-and-lift-scheme}. After having performed $n$ of these
lifting steps we arrive at the finitely many fibers of the matrix $A$
which are atomic w.r.t.~$M$. 

\begin{figure}[h]
\centering
\ifpdf
    \input{project-and-lift-scheme.pdf_t}
    \else
    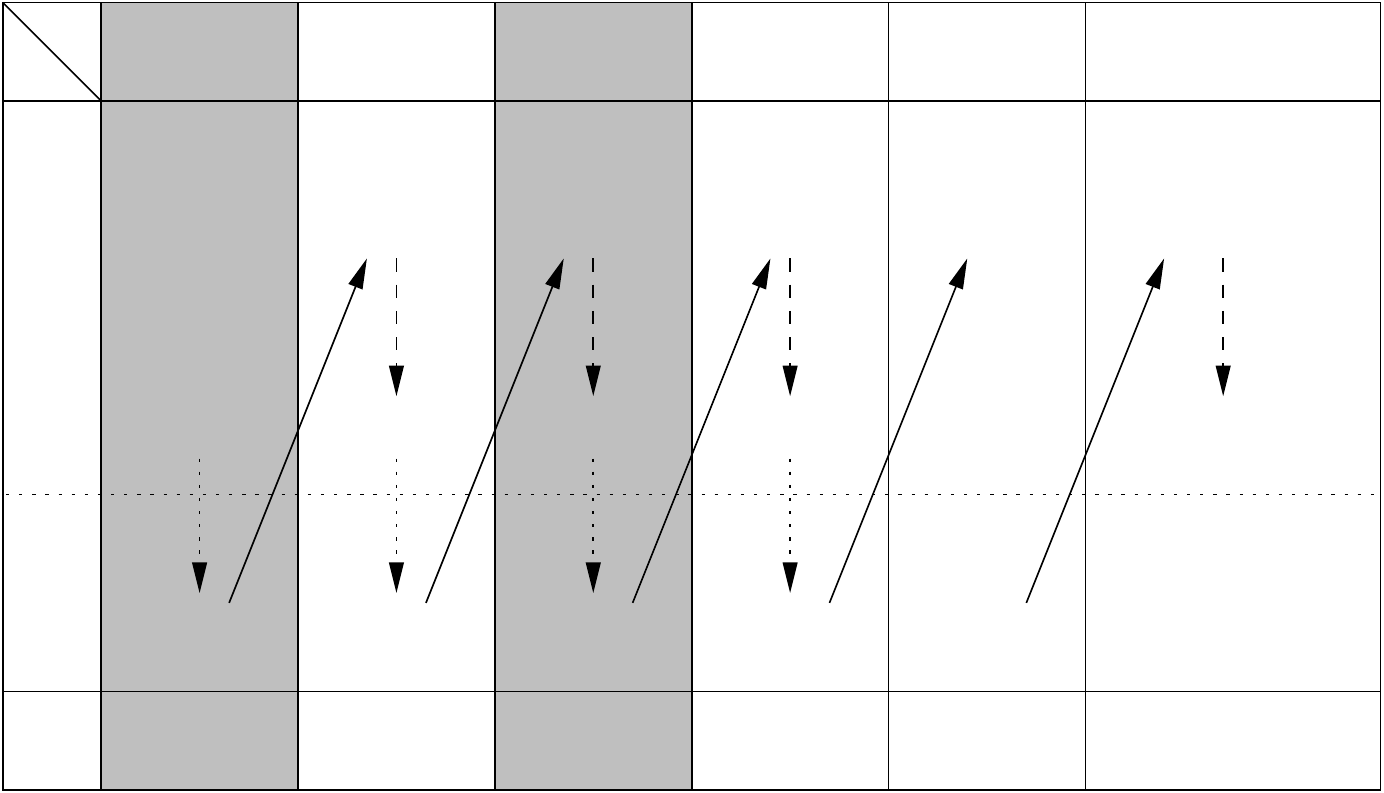
    \fi
\caption{The scheme of the project-and-lift algorithm}
\label{project-and-lift-scheme}
\end{figure}    

\subsection*{Dealing with infinitely many atomic fibers}
Let $A \in \Z^{d \times n}$ be a matrix and $M \subseteq \Z^d$ a
monoid which is finitely generated.
The project-and-lift algorithm will deal with partially
extended fibers w.r.t.~$\mathop{\oplus}^{(l)}$ and the monoid $M$
where $l \leq n$. Recall from Definition \ref{defining atomic fibers}
that $Q_b^{(k)}$ is atomic w.r.t.~$\mathop{\oplus}^{(l)}$ and $M$ if
there is no decomposition 
\begin{displaymath}
Q_{b}^{(k)} = Q_{b_1}^{(k)} \koplus{(l)} Q_{b_2}^{(k)}
\end{displaymath} 
with $b_1, b_2 \in M$ and $\pi_l(Q_{b_1}^{(k)}),\pi_l(Q_{b_2}^{(k)})
\neq \pi_l(Q_0^{(k)})$. 
Note that for $l < n$ there are usually some
$\bar{b} \in M$ with $\pi_l(Q_{\bar{b}}^{(k)}) =
\pi_l(Q_0^{(k)})$. Therefore, if $Q_b^{(k)}$ is atomic
w.r.t.~$\mathop{\oplus}^{(l)}$ and $M$ then so is
$Q_{b+\bar{b}}^{(k)}$, $Q_{b+2\bar{b}}^{(k)}$, \dots. This means that
for $l < n$ we usually have infinitely many partially extended
fibers which are atomic w.r.t.~$\mathop{\oplus}^{(l)}$. It is clear that no terminating algorithm may compute the
whole set of atomic partially extended fibers
w.r.t.~$\mathop{\oplus}^{(l)}$ and $M$. Therefore we introduce a
preorder $\preceq_l$ (i.e., a reflexive and transitive binary relation) on the
set of right-hand side vectors $b 
\in M$ with non-empty partially extended fiber $Q_b^{(k)}$ and perform
the $l$-th step of the project-and-lift algorithm 
w.r.t.~the preorder $\preceq_l$.

\begin{Definition}
\label{partial ordering}
Let $M^{(k)} \subseteq M$ be the submonoid of $M$ with $Q_b^{(k)}
\neq \emptyset$ for $b \in M^{(k)}$.
Let $A \in \Z^{d \times n}$, $0 \leq k \leq
l \leq n$ and let $b, \bar{b} \in M^{(k)}$. We say that $b \preceq_l  
\bar{b}$ if $\bar{b} -b \in \bar{S}^{(l)}$, where
$\bar{S}^{(l)} = \{\lambda_{l+1}A_{l+1} + \ldots + \lambda_nA_n
\colon \lambda_i \in \Z\} \cap M$. 

$b \in M^{(k)}$ is called \emph{minimal
w.r.t.~$\preceq_l$} if there is no 
$b \neq \bar{b} \in M^{(k)}$ with
$\bar{b} \preceq_l b$. 

Note that $\bar{b} \preceq_l b$ implies
$\pi_l(Q_{\bar{b}}^{(k)}) = \pi_l(Q_b^{(k)})$ and $Q_b^{(k)} =
Q_{\bar{b}}^{(k)} \mathop{\oplus}^{(l)} Q_{b -\bar{b}}^{(k)}$.
\end{Definition}

The relation $\preceq_l$ defines a preorder on the set of
right-hand sides $b \in M$ with non-empty partially extended fibers
of order $k$. Additionally we have the following relation between the
sets $\bar{S}^{(l)}$:
\begin{equation}
\label{relation between S}
\{0\} = \bar{S}^{(n)} \subseteq \bar{S}^{(n-1)} \subseteq \ldots
\subseteq \bar{S}^{0} = M^{(0)}.
\end{equation}

\begin{Lemma}
\label{finitely many atomic fibers}
Let $M = \langle m_1, \ldots, m_t \rangle$ be a monoid which is
finitely generated.
\begin{enumerate}
\item[(i)] \addvspace{-8pt}
Let $0 \leq k \leq l\leq n$ and 
let $M \supseteq F = \{b_1, b_2, \ldots \}$ be a set of vectors with $b_i
\npreceq_l b_j$ for all $i < j$.  Then $F$ is finite. 
\item[(ii)]
Let $0 \leq k \leq l \leq n$ and let $F = \{b_1, b_2, \ldots \}$ be a
set of right hand sides satisfying $Q_{b_i}^{(k)}$ is atomic
w.r.t.~$\mathop{\oplus}^{(l)}$ and $M$ and $b_i \npreceq_l b_j$ for
all $b_i \neq b_j$. Then $F$ is finite.
\end{enumerate}
\end{Lemma}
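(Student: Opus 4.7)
The plan is to establish Part~(i) via Maclagan's theorem (Theorem~\ref{Theorem: Maclagan}) applied to a family of monomial ideals associated with $F$, and to obtain Part~(ii) as an immediate corollary.

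For (ii) from (i): under any enumeration of $F$, the antichain hypothesis $b_i \npreceq_l b_j$ for distinct $b_i, b_j$ yields in particular $b_i \npreceq_l b_j$ whenever $i < j$, so (i) applies and $F$ is finite. The atomicity of $Q_{b_i}^{(k)}$ enters only to justify the antichain condition in the first place: if $b_i \preceq_l b_j$ held for some $b_i \neq b_j$ in $F$, the note in Definition~\ref{partial ordering} would furnish a nontrivial decomposition $Q_{b_j}^{(k)} = Q_{b_i}^{(k)} \mathop{\oplus}^{(l)} Q_{b_j - b_i}^{(k)}$ with $\pi_l(Q_{b_i}^{(k)}) = \pi_l(Q_{b_j}^{(k)}) \neq \pi_l(Q_0^{(k)})$, contradicting atomicity of $Q_{b_j}^{(k)}$.

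For~(i): since $M = \langle m_1, \ldots, m_t \rangle$ is finitely generated and $\bar{S}^{(l)}$ is the intersection of a sublattice of $\Z^d$ with $M$, $\bar{S}^{(l)}$ is itself a finitely generated submonoid; fix generators $s_1, \ldots, s_p$. For each $b_j \in F$ I would choose a representation $\alpha^j \in \Z_+^t$ with $\phi(\alpha^j) := \sum_i \alpha^j_i m_i = b_j$ (minimal w.r.t.~$\red$) and associate a monomial ideal $I_j \subseteq \fieldk[x_1, \ldots, x_t, y_1, \ldots, y_p]$ whose generators are indexed by pairs $(\alpha, \gamma) \in \Z_+^{t+p}$ satisfying $\phi(\alpha) = b_j + \psi(\gamma)$, where $\psi(\gamma) := \sum_q \gamma_q s_q$. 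Mirroring the ideal construction used in the proof of Lemma~\ref{finitely many irreducible pairs} (with $\preceq_l$ playing the role of the reduction preorder $\trianglelefteq$), the ideal is arranged so that $b_i \preceq_l b_j$, i.e.\ $b_j - b_i \in \bar{S}^{(l)}$, corresponds to a containment between $I_i$ and $I_j$. The hypothesis $b_i \npreceq_l b_j$ for $i < j$ then translates into the absence of these containments, so $\{I_j\}$ is an antichain of monomial ideals in a fixed polynomial ring, and Maclagan's theorem forces it, and hence $F$, to be finite.

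The main obstacle is designing the ideal so that the correspondence between $\preceq_l$ and $\subseteq$ is a biconditional rather than merely a one-sided implication, despite $\bar{S}^{(l)}$ being only a submonoid (not a subgroup) of $\Z^d$. A naive choice such as $I_j := \langle x^\alpha : \phi(\alpha) \in b_j + \bar{S}^{(l)}\rangle$ gives only the implication $b_i \preceq_l b_j \Rightarrow I_j \subseteq I_i$ (by closure of $\bar{S}^{(l)}$ under addition), which is insufficient to extract an antichain of ideals from the one-directional antichain hypothesis on $F$. The remedy is to enrich the ideal with the $y$-variables encoding $\bar{S}^{(l)}$-shifts as above and to restrict to componentwise-minimal generating sets (finite by the Gordan-Dickson lemma, cf.~Remark~\ref{finiteness of minimal elements}); this enriched encoding yields the required biconditional, after which Maclagan's theorem closes the argument.
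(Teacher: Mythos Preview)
Your route diverges from the paper's. The paper does not build a new family of monomial ideals tailored to $\preceq_l$; instead it performs a one-line reduction to Lemma~\ref{finitely many irreducible pairs}(i): pick any representations $\alpha^i\in\Z_+^t$ of the $b_i$, argue that $b_i\npreceq_l b_j$ for $i<j$ forces $(\alpha^i,Q_{b_i}^{(k)})\ntrianglelefteq(\alpha^j,Q_{b_j}^{(k)})$ for $i<j$, and then quote Lemma~\ref{finitely many irreducible pairs}(i) verbatim. All the monomial-ideal and Maclagan machinery is already packaged inside that earlier lemma; nothing new is constructed. Part~(ii) is then immediate from~(i), exactly as you say. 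So the paper's approach is structurally much lighter than yours: it reuses the relation $\trianglelefteq$ (which already carries the fiber data $Q_b^{(k)}$) rather than inventing a fresh encoding of the preorder~$\preceq_l$ via generators of~$\bar S^{(l)}$.

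There is also a genuine gap in your sketch. You correctly isolate the crux --- one needs the implication ``ideal containment $\Rightarrow$ $b_i\preceq_l b_j$'' in order to extract an antichain of ideals from the hypothesis --- but your proposed ideals $I_j=\langle x^\alpha y^\gamma:\phi(\alpha)=b_j+\psi(\gamma)\rangle$ do not deliver it. Testing the generator $x^{\alpha^j}y^0$ of $I_j$ against $I_i$ forces only $\gamma'=0$ and $\phi(\alpha')=b_i$ with $\alpha'\leq\alpha^j$, which yields $b_j-b_i\in M$ but says nothing about membership of $b_j-b_i$ in the sublattice $\langle A_{l+1},\dots,A_n\rangle_\Z$; the $y$-variables, being pinned at exponent zero here, carry no $\bar S^{(l)}$-information. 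Your final paragraph asserts that passing to componentwise-minimal generators repairs this, but no mechanism is supplied, and it is not clear how a purely $(\alpha,\gamma)$-side encoding can detect that $b_j-b_i$ lies in $\bar S^{(l)}$ rather than merely in $M$. The paper avoids this entire difficulty by routing through $\trianglelefteq$ and the fibers $Q_b^{(k)}$ instead of trying to encode $\preceq_l$ directly.
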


\begin{proof}
(i): 
Let $b_i, b_j \in F$ with $i < j$ and let $\alpha^i, \alpha^j \in
\Z_+^t$ with $b_i = \sum\nolimits_{k=1}^{t}{\alpha^i_km_k}$ and $b_j
= \sum\nolimits_{k=1}^{t}{\alpha^j_km_k}$. Then
$(\alpha^i,Q_{b_i}^{(k)}) \ntrianglelefteq
(\alpha^j,Q_{b_j}^{(k)})$. Suppose not. Then we have $\alpha^i \red
\alpha^j$ and $Q_{b_j}^{(k)} = Q_{b_i}^{(k)} \mathop{\oplus}
Q_{b_j-b_i}^{(k)}$ which implies that $Q_{b_j}^{(k)} = Q_{b_i}^{(k)}
\mathop{\oplus}^{(l)} Q_{b_j-b_i}^{(k)}$. But this last relation
contradicts the fact that $b_i \npreceq_l b_j$. Therefore $(\alpha^i,
Q_{b_i}^{(k)}) \ntrianglelefteq (\alpha^j, Q_{b_j}^{(k)})$ for all
$b_i,b_j \in F$ for $i < j$. Finiteness of $F$ follows with Lemma
\ref{finitely many irreducible pairs} (i). \newline
(ii): This is a direct consequence of (i). \end{proof}

Our algorithm will work with sets of vectors $F$ which
have the property claimed in Lemma \ref{finitely many atomic
  fibers}. Additionally they will admit the following property: if $b
\in M$ is the right-hand side of a partially extended fiber
$Q_b^{(k)}$ which is atomic w.r.t.~$\mathop{\oplus}^{(l)}$ and $M$
then there is $\bar{b} \in F$ with $\bar{b} \preceq_l b$. This means
in particular: If $\bar{b} \in M$ is minimal w.r.t.~$\preceq_l$ and
$Q_{\bar{b}}^{(k)}$ is atomic w.r.t.~$\mathop{\oplus}^{(l)}$ and $M$
then $\bar{b} \in F$.  Note, however, that the converse is not true in
general:  It is not guaranteed that for every $b\in M$ there is a $\bar{b}
\preceq_l b$ that is minimal w.r.t.~$\preceq_l$.

\section{The k-th step of the project-and-lift algorithm}
\label{Section: The k-th step of the project-and-lift algorithm}
In the following subsections we will explain the individual steps the
project-and-lift algorithm performs during one lifting step.

\subsection{The completion procedure}
\label{the completion procedure}

In this
subsection we will explain the so-called ``completion procedure'' in the
$k$-th step of the algorithm. This part is illustrated in Figure
\ref{completion}.

\begin{figure}[ht]
\centering
\ifpdf
    \input{completion.pdf_t}
    \else
    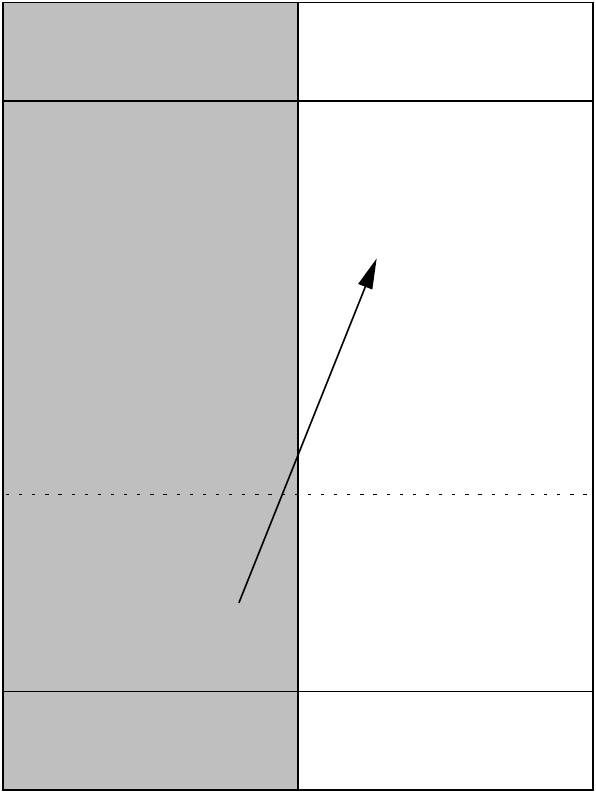
    \fi
\caption{The completion procedure of the $k$-th lifting step}
\label{completion}
\end{figure}

Let $M$ be a monoid which is finitely generated. We
denote by $M^{(k)} = \{b \in M \colon Q_b^{(k)} \neq \emptyset\}$ the
submonoid of all right-hand sides $b \in M$ having non-empty
partially extended fibers of order $k$. We have:
\begin{displaymath}
M \supseteq M^{(0)} \supseteq M^{(1)} \supseteq \ldots \supseteq M^{(n)}.
\end{displaymath}
As in the previous section, $\bar{S}^{(l)}$, $0 \leq l \leq n$, will
denote the set 
$\{\lambda_{l+1}A_{l+1} + \ldots + \lambda_nA_n \colon \lambda_i \in
\Z\} \cap M$.

\begin{Definition}
\label{weight function}
We introduce a weight function $\omega_{m}$ for partially extended
fibers $Q_b^{(k)}$ ($m \leq k \leq n$) by
\begin{displaymath}
\omega_{m}(Q_b^{(k)}) = \mathrm{min }\{||\pi_m(v)||_1 \colon v \in
Q_b^{(k)}\}.
\end{displaymath}
\end{Definition}

\begin{Remark}
Actually, it suffices to determine $||\pi_m(v)||_1$ for
$\red_m$-minimal elements $v$ in $Q_b^{(k)}$ to determine the value of
$\omega_m(Q_b^{(k)})$. To see this, suppose there is $w \in
Q_b^{(k)}$ non-minimal w.r.t.~$\red_m$. Then there is $v \in
Q_b^{(k)}$ with $v \red_m w$ and thus $0 \leq v^{j} \leq w^{j}$ for $j =1,
\ldots, k$. Therefore $||\pi_m(v)||_1 \leq ||\pi_m(w)||_1$.
\end{Remark}

\begin{algorithm}[ht]
\caption{The completion procedure to compute atomic partially
  extended fibers w.r.t.~a monoid}
\begin{algorithmic}[1]
\REQUIRE{A set $\tilde{F}_{k-1} \subseteq M^{(k-1)}$ with the
following properties:
\begin{enumerate}
\item[(i)]
For every right-hand side $b \in M^{(k-1)}$ of a partially
extended fiber $Q_b^{(k-1)}$ which is atomic
w.r.t.~$\mathop{\oplus}^{(k-1)}$ and $M$ there exists $\tilde{b} \in
\tilde{F}_{k-1}$ with $\tilde{b} \preceq_{k} b$.
\item[(ii)]
$b_i \npreceq_k b_j$ for $b_i,b_j \in \tilde{F}_{k-1}$ with $b_i \neq
b_j$.
\end{enumerate}}
\ENSURE{A set $G_{k-1} \subseteq M^{(k-1)}$ with the
properties: 
\begin{enumerate}
\item[(i)]
For every right-hand side $b \in M^{(k-1)}$ of a partially
extended fiber $Q_b^{(k-1)}$ which is atomic
w.r.t.~$\mathop{\oplus}^{(k)}$ and $M$ there exists $\tilde{b} \in
G_{k-1}$ with $\tilde{b} \preceq_{k} b$. 
\item[(ii)]
$b_i \npreceq_{k} b_j$ for $b_i \neq b_j \in
G_{k-1}$.
\end{enumerate}}
\STATE $\bar{G}^{\omega =0}:= \{f \in \tilde{F}_{k-1} \colon \omega_{k-1}(Q_f^{(k-1)})
= 0\}$
\STATE $C^{\omega=0}:= \bigcup\nolimits_{f,g \in \bar{G}^{\omega = 0}}{\{f+g\}}$
\WHILE{$C^{\omega=0} \neq \emptyset$}
\STATE $s:=$ an element in $C^{\omega =0}$ 
\STATE $C^{\omega=0}:=C^{\omega=0} \setminus \{s\}$
\STATE $f:=$ monoid-normal-form $(s,\bar{G}^{\omega =0},\emptyset,M^{(k-1)})$
\IF{$f \notin \bar{S}^{(k)}$}
\STATE $\bar{G}^{\omega =0}:= \bar{G}^{\omega =0} \cup \{f\}$
\STATE $C^{\omega=0}:= C^{\omega =0} \cup \bigcup\nolimits_{g \in \bar{G}^{\omega =
    0}}{\{f+g\}}$
\ENDIF
\ENDWHILE
\STATE $G^{\omega=0} := \emptyset$
\FORALL{$b \in \bar{G}^{\omega=0}$}
\IF {$Q_b^{(k-1)} \neq Q_g^{(k-1)} \mathop{\oplus}^{(k)} Q_{b-g}^{(k-1)}$
for all $b \neq g \in \bar{G}^{\omega=0}$ with $b-g \in M$}
\STATE $G^{\omega=0} := G^{\omega=0} \cup \{b\}$
\ENDIF
\ENDFOR
\STATE  $\bar{G}^{\omega \geq 1} := \{f \in \tilde{F}_{k-1} \colon \omega_{k-1}(Q_f^{(k-1)})
> 0\}$, $G^{\omega \geq 1} = \emptyset$
\FORALL{ $g \in \bar{G}^{\omega \geq 1}$}
\STATE $G^{\omega \geq 1} = G^{\omega \geq 1} \cup$
monoid-normal-form $(g,G^{\omega=0}, \emptyset, M^{(k-1)})$
\ENDFOR
\STATE $C^{\omega \geq 1}:=\bigcup\nolimits_{f,g \in G^{\omega
    \geq 1}}{\{f+g\}}$
\WHILE {$C^{\omega \geq 1} \neq \emptyset$}
\STATE $s :=$ an element in $C^{\omega \geq 1}$ with smallest weight
$\omega_{k-1}(Q_s^{(k-1)})$
\STATE $C^{\omega \geq 1} := C^{\omega \geq 1} \setminus \{s\}$
\STATE $f:=$ monoid-normal-form $(s,G^{\omega=0},G^{\omega \geq
  1},M^{(k-1)})$
\IF {$f \neq 0$}
\STATE $G^{\omega \geq 1} := G^{\omega \geq 1} \cup \{f\}$
\STATE $C^{\omega \geq 1} := C^{\omega \geq 1} \cup
\bigcup\nolimits_{g \in G^{\omega = 0} \cup G^{\omega \geq 1}}{\{f+g\}}$
\ENDIF
\ENDWHILE
\STATE $G_{k-1}:=G^{\omega =0} \cup G^{\omega \geq 1} \cup \{0\}$
\STATE \textbf{return } $G_{k-1}$
\end{algorithmic}
\label{m-pl}
\end{algorithm}

\begin{algorithm}
\caption{The monoid-normal-form algorithm}
\begin{algorithmic}[1]
\REQUIRE{$s, G^{\omega =0}, G^{\omega \geq 1}$, membership oracle for $M^{(k-1)}$}
\ENSURE{a normal form of $s$ w.r.t.~$G^{\omega =0} \cup G^{\omega
    \geq 1}$ and $M^{(k-1)}$}
\IF{$\exists g \in G^{\omega \geq 1}$ with $Q_{s}^{(k-1)}=
  Q_{g}^{(k-1)} \mathop{\oplus}^{(k)} Q_{s-g}^{(k-1)}$ and
  $s-g \in M^{(k-1)}$}
\STATE \textbf{return } $0$
\ELSE
\WHILE {$\exists g \in G^{\omega =0}$ with $Q_{s}^{(k-1)}=
  Q_{g}^{(k-1)} \mathop{\oplus}^{(k)} Q_{s-g}^{(k-1)}$ and
  $s -g \in M^{(k-1)}$}
\STATE $s:=s-g$
\ENDWHILE
\STATE \textbf{return } $s$
\ENDIF
\end{algorithmic}
\label{m-nF}
\end{algorithm}

\begin{Lemma}
\label{correctness of m-pl}
Algorithm \ref{m-pl} with input set $\tilde{F}_{k-1}:= \{b_1, \ldots,
b_s\}$ and monoid $M^{(k-1)}$ terminates and computes a set
$G_{k-1} = G^{\omega=0} \cup G^{\omega \geq 1} \cup \{0\}
\subseteq M^{(k-1)}$ with properties (i) and (ii). 
\end{Lemma}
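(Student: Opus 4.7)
The plan is to split the argument into three parts: termination, property~(ii) (pairwise $\preceq_k$-incomparability of the right-hand sides retained), and property~(i) (every right-hand side of a fiber atomic w.r.t.\ $\mathop{\oplus}^{(k)}$ is $\preceq_k$-dominated by some element of $G_{k-1}$).

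\emph{Termination.} I would argue that both while-loops produce $\preceq_k$-antichains of right-hand sides in $M^{(k-1)}$. In the first loop, whenever $f$ is added to $\bar G^{\omega=0}$, the test $f\notin \bar S^{(k)}$ ensures $0\npreceq_k f$, and the normal-form subroutine ensures $g\npreceq_k f$ for every $g$ already in $\bar G^{\omega=0}$ (otherwise $Q_f^{(k-1)}=Q_g^{(k-1)}\mathop{\oplus}^{(k)}Q_{f-g}^{(k-1)}$ and $f-g\in M$ would have reduced $f$ further). An analogous statement holds for the second loop, where the normal-form routine involves both $G^{\omega=0}$ and $G^{\omega\geq 1}$. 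By Lemma~\ref{finitely many atomic fibers}(i) every $\preceq_k$-antichain of right-hand sides in $M^{(k-1)}$ is finite, so both loops halt after finitely many iterations; the remaining for-loops run over finite sets.

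\emph{Property~(ii).} For $G^{\omega=0}$, the final filter loop explicitly removes any $b\in \bar G^{\omega=0}$ that admits a decomposition $Q_b^{(k-1)}=Q_g^{(k-1)}\mathop{\oplus}^{(k)}Q_{b-g}^{(k-1)}$ with $g\in \bar G^{\omega=0}$, $g\neq b$, and $b-g\in M$; but such a decomposition is equivalent to $g\preceq_k b$ (by the remark in Definition~\ref{partial ordering} together with the fact that $b,g$ have the same weight~$0$). Hence elements of $G^{\omega=0}$ are pairwise $\preceq_k$-incomparable. For $G^{\omega\geq 1}$, the normal-form step at each insertion prevents a $\preceq_k$-relation among weight-$\geq 1$ elements, and the initial reduction of every $g\in \bar G^{\omega\geq 1}$ modulo $G^{\omega=0}$, combined with the observation that no $\preceq_k$-relation can hold between a weight-zero and a weight-$\geq 1$ right-hand side (the difference would have to lie in $\bar S^{(k)}$ and carry all of the first-$(k{-}1)$ content), rules out cross-relations. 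Finally the explicit $\{0\}$ is incomparable with any other retained vector since all retained vectors satisfy $g\npreceq_k 0$.

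\emph{Property~(i), the main obstacle.} Let $b\in M^{(k-1)}$ be such that $Q_b^{(k-1)}$ is atomic w.r.t.\ $\mathop{\oplus}^{(k)}$ and $M$. Because $\mathop{\oplus}^{(k)}$ is a stronger decomposition relation than $\mathop{\oplus}^{(k-1)}$, the fiber $Q_b^{(k-1)}$ is also atomic w.r.t.\ $\mathop{\oplus}^{(k-1)}$, so by input property~(i) there is some $\tilde b\in \tilde F_{k-1}\subseteq G_{k-1}^{\text{init}}$ with $\tilde b\preceq_{k-1} b$. I would now mimic the proof of Lemma~\ref{Lemma: Algorithm to compute extended atomic fibers terminates and is correct}: among all representations $b=\tilde b_{j_1}+\dots+\tilde b_{j_r}+s$ with $\tilde b_{j_i}\in G_{k-1}$, $s\in \bar S^{(k)}$, and a corresponding decomposition $z=\sum v_{i,j_\ell}+w_i$ of each $\red_k$-minimal $z\in R_{b,k}^{(k-1)}$, pick one minimizing $\sum_{i,\ell}\|\pi_k(v_{i,j_\ell})\|_1+\sum_i\|\pi_k(w_i)\|_1$. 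If some $v_{i_0,j_1},v_{i_0,j_2}$ have opposite signs in a coordinate $\leq k$, the $\mathop{\oplus}^{(k)}$-decomposition of $Q_{\tilde b_{j_1}+\tilde b_{j_2}}^{(k-1)}$ obtained by the completion (whose S-vector $\tilde b_{j_1}+\tilde b_{j_2}$ was processed in one of the two while-loops and either reduced into $G^{\omega=0}\cup G^{\omega\geq 1}$ or added outright) yields a strictly smaller representation, contradicting minimality. Hence every $v_{i,j_\ell}$ satisfies $v_{i,j_\ell}\red_k z$, which by Lemma~\ref{minimal elements suffice} gives $Q_b^{(k-1)}=\bigoplus^{(k)}_\ell Q_{\tilde b_{j_\ell}}^{(k-1)}\mathop{\oplus}^{(k)}Q_s^{(k-1)}$. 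Atomicity w.r.t.\ $\mathop{\oplus}^{(k)}$ forces this to be trivial, so $b=\tilde b_{j_\ell}+s$ for some single index, giving $\tilde b_{j_\ell}\in G_{k-1}$ with $\tilde b_{j_\ell}\preceq_k b$ as required.

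The subtle step is the last one: one has to verify that the two-stage completion (weight-$0$ first, then weight-$\geq 1$, with S-vectors of smallest weight processed first) really inserts enough elements so that each S-vector $\tilde b_{j_1}+\tilde b_{j_2}$ triggered in the $\ell^1$-reduction argument has its normal form available when needed. The weight prioritization guarantees that a weight-$\geq 1$ S-vector is not processed before the weight-$0$ completion has stabilized, so its reduction by $G^{\omega=0}$ in the monoid-normal-form routine is already a reduction by a closed set; this is what allows the exchange argument in the minimization step to go through even under the monoid restriction $s-g\in M^{(k-1)}$.
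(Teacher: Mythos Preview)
Your argument for property~(i) contains a genuine error at its very first step. You claim that since $\mathop{\oplus}^{(k)}$ is a stronger decomposition relation than $\mathop{\oplus}^{(k-1)}$, a fiber atomic w.r.t.\ $\mathop{\oplus}^{(k)}$ is also atomic w.r.t.\ $\mathop{\oplus}^{(k-1)}$. The implication goes the other way: every $\mathop{\oplus}^{(k)}$-decomposition is in particular a $\mathop{\oplus}^{(k-1)}$-decomposition (because $v\red_k u$ implies $v\red_{k-1} u$), so ruling out all $\mathop{\oplus}^{(k-1)}$-decompositions is the \emph{stronger} hypothesis. Thus $(k{-}1)$-atomic $\Rightarrow$ $(k)$-atomic, not conversely; indeed, the whole point of the completion step is that passing from level $k-1$ to level $k$ creates \emph{new} atomic fibers. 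Consequently you cannot invoke input property~(i) of $\tilde F_{k-1}$ directly on a $(k)$-atomic right-hand side~$b$, and the representation $b=\sum \tilde b_{j_\ell}+s$ that you subsequently minimize over is not known to exist.

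The paper circumvents this by inserting Lemma~\ref{Representation}: for \emph{any} $\beta\in M^{(k-1)}$ (atomic or not), one first decomposes $Q_\beta^{(k-1)}$ into $(k{-}1)$-atomic summands, applies input property~(i) to each summand separately to obtain $\tilde b_i\in\tilde F_{k-1}$ with $\tilde b_i\preceq_k b_i$, and then assembles $\tilde\beta:=\sum_i\tilde b_i$ with $\tilde\beta\preceq_k\beta$ and $Q_{\tilde\beta}^{(k-1)}=\bigoplus^{(k-1)}_i Q_{\tilde b_i}^{(k-1)}$. Only after this preliminary representation is secured does the paper run the $\ell^1$-minimization and exchange argument (essentially the one you sketch), working with $\tilde b$ rather than with $b$ itself and with summands $b_j\in G_{k-1}$ subject to $\pi_k(Q_{b_j}^{(k-1)})\neq\pi_k(Q_0^{(k-1)})$. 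The exchange step then exploits that any sign conflict must occur in the $k$-th coordinate (the first $k-1$ coordinates being aligned by the $\oplus^{(k-1)}$ representation), which controls the weight of the S-vector $b_{j_1}+b_{j_2}$ and hence determines which loop has already processed it. Your outline of termination and property~(ii) is broadly along the paper's lines, but property~(i) needs this extra lemma to get off the ground.
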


For the proof of Lemma~\ref{correctness of m-pl}, we have to introduce some
more notation. 
\begin{Notation}
\label{elements partially ext fibers}
During the proof of Algorithm \ref{m-pl} we examine elements of
partially extended fibers. These elements will be denoted as follows: 
$$Q_{b}^{(k-1)} \ni z= (z_1,z_2,z_3) \in \Z^{k-1}_+ \times \Z \times
\Z^{n-k},$$
i.e., $z_1 \in \Z^{(k-1)}_+$ denotes the first $k-1$ components, $z_2 \in \Z$ the
$k$-th component and $z_3 \in \Z^{(n-k)}$ denotes the last
$n-k$ components.
\end{Notation}

We will use the following lemma in the proof of Lemma~\ref{correctness of m-pl}.
\begin{Lemma}
\label{Representation}
Let $\tilde{F}_{k-1} \subseteq M^{(k-1)}$ be a set admitting the
following property: for every
right-hand side $b \in M^{(k-1)}$ of a partially 
extended fiber $Q_b^{(k-1)}$ which is atomic
w.r.t.~$\mathop{\oplus}^{(k-1)}$ and $M$ there exists $\tilde{b} \in
\tilde{F}_{k-1}$ with $\tilde{b} \preceq_{k} b$. 
Let $\beta \in
M^{(k-1)}$ be the right-hand side of an arbitrary partially extended
fiber of order $(k-1)$. Then we find $\tilde{b}_i \in \tilde{F}_{k-1}$ such that for
$M^{(k-1)} \owns \tilde{\beta} := \sum\nolimits{\tilde{b}_i}$ we have $\tilde{\beta}
\preceq_k \beta$ and 
\begin{equation}
\label{representation of Q}
Q_{\tilde{\beta}}^{(k-1)} = \bigoplus\nolimits_{i \in
  I}^{(k-1)}{Q_{\tilde{b_i}}^{(k-1)}}.
\end{equation}
\end{Lemma}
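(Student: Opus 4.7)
The plan is to decompose $Q_\beta^{(k-1)}$ first into a $\mathop{\oplus}^{(k-1)}$-sum of fibers that are atomic w.r.t.~$\mathop{\oplus}^{(k-1)}$ and~$M$, then use the hypothesis on $\tilde{F}_{k-1}$ to replace each atomic summand by an element of $\tilde{F}_{k-1}$, and finally verify that the resulting $\tilde{\beta}$ satisfies both $\tilde{\beta} \preceq_k \beta$ and the claimed $(k{-}1)$-restricted Minkowski decomposition. Concretely, applying the decomposition procedure behind Algorithm~\ref{Algorithm to decompose fibers into sums of atomic fibers} with $\mathop{\oplus}^{(k-1)}$ in place of $\oplus$ (justified by Lemma~\ref{minimal elements suffice} in the case $l=k-1$ and the finiteness result Lemma~\ref{reduction and monoid-atomic}) produces a finite index set $I$, right-hand sides $b_i \in M$ with $\sum_{i \in I} b_i = \beta$, and a decomposition $Q_\beta^{(k-1)} = \bigoplus\nolimits_{i \in I}^{(k-1)} Q_{b_i}^{(k-1)}$ in which each summand is atomic w.r.t.~$\mathop{\oplus}^{(k-1)}$ and~$M$. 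By the hypothesis on $\tilde{F}_{k-1}$, for each $i \in I$ there is $\tilde{b}_i \in \tilde{F}_{k-1}$ with $\tilde{b}_i \preceq_k b_i$, i.e.\ $b_i - \tilde{b}_i \in \bar{S}^{(k)}$; since $\bar{S}^{(k)}$ is closed under addition, setting $\tilde{\beta} := \sum_{i \in I} \tilde{b}_i \in M^{(k-1)}$ gives $\beta - \tilde{\beta} \in \bar{S}^{(k)}$ and hence $\tilde{\beta} \preceq_k \beta$.

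The principal task is then to establish $Q_{\tilde{\beta}}^{(k-1)} = \bigoplus\nolimits_{i \in I}^{(k-1)} Q_{\tilde{b}_i}^{(k-1)}$. For $v \in Q_{\tilde{\beta}}^{(k-1)}$, I would first shift the last $n-k$ coordinates of $v$ by a vector $c$ supported on positions $k+1,\dots,n$ with $Ac = \beta - \tilde{\beta}$, yielding $v' := v + c \in Q_\beta^{(k-1)}$ with $\pi_k(v') = \pi_k(v)$. The $\mathop{\oplus}^{(k-1)}$-decomposition of $Q_\beta^{(k-1)}$ then provides $w_i \in Q_{b_i}^{(k-1)}$ with $\sum_i w_i = v'$ and $w_i \red_{k-1} v'$. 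Using $\tilde{b}_i \preceq_k b_i$, I would subtract from each $w_i$ a vector $c_i$ supported on coordinates $k+1,\dots,n$ with $Ac_i = b_i - \tilde{b}_i$, producing $\tilde{w}_i := w_i - c_i \in Q_{\tilde{b}_i}^{(k-1)}$ whose first $k$ coordinates agree with those of $w_i$. The sum $\sum_i \tilde{w}_i$ then agrees with $v$ on the first $k$ coordinates and has image $\tilde{\beta} = Av$ under $A$, so the residual $v - \sum_i \tilde{w}_i$ is a kernel element supported on positions $k+1,\dots,n$; absorbing it into $\tilde{w}_1$ preserves $\tilde{w}_1 \in Q_{\tilde{b}_1}^{(k-1)}$ and leaves its first $k$ coordinates intact. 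The $\red_{k-1}$-condition follows because $\pi_{k-1}(\tilde{w}_i) = \pi_{k-1}(w_i) \red_{k-1} \pi_{k-1}(v') = \pi_{k-1}(v)$.

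The main obstacle I foresee is this last bookkeeping step: transferring the decomposition from $Q_\beta^{(k-1)}$ to $Q_{\tilde{\beta}}^{(k-1)}$ while simultaneously preserving non-negativity of the first $k-1$ coordinates, the exact equality $v = \sum_i \tilde{w}_i$, and the $\red_{k-1}$-condition on each summand. All three properties follow from the fact that every corrective shift is confined to the last $n-k$ coordinates, but verifying them simultaneously for one coherent choice of adjustments is the delicate part of the argument.
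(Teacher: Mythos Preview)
Your proposal is correct and follows exactly the paper's strategy: decompose $Q_\beta^{(k-1)}$ into $\mathop{\oplus}^{(k-1)}$-atomics $Q_{b_i}^{(k-1)}$, replace each $b_i$ by $\tilde b_i\in\tilde F_{k-1}$ with $\tilde b_i\preceq_k b_i$, set $\tilde\beta=\sum_i\tilde b_i$, and verify the two claims. The only difference is presentational: where you carry out the last verification by explicit coordinate shifts supported on positions $k{+}1,\dots,n$, the paper wraps this up in one line by observing that $\tilde b_i\preceq_k b_i$ and $\tilde\beta\preceq_k\beta$ force $\pi_{k-1}(Q_{\tilde b_i}^{(k-1)})=\pi_{k-1}(Q_{b_i}^{(k-1)})$ and $\pi_{k-1}(Q_{\tilde\beta}^{(k-1)})=\pi_{k-1}(Q_{\beta}^{(k-1)})$ (cf.\ the note in Definition~\ref{partial ordering}), so the $(k{-}1)$-restricted decomposition already holds on the level of projections and lifts immediately---your bookkeeping is precisely the content of that lift.
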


\begin{proof}
Let $\beta \in M^{(k-1)}\setminus \{0\}$ be the right-hand side of a partially
extended fiber of order $(k-1)$. Consider a decomposition of
$Q_{\beta}^{(k-1)}$ into a sum of fibers 
of order $(k-1)$ which are atomic w.r.t.~$\mathop{\oplus}^{(k-1)}$
and $M$:

\begin{displaymath}
Q_{\beta}^{(k-1)} = \bigoplus\limits_{i \in I}^{(k-1)}{Q_{b_i}^{(k-1)}}
\end{displaymath}

As the partially extended fibers $Q_{b_i}^{(k-1)}$ are atomic
w.r.t.~$\mathop{\oplus}^{(k-1)}$ and $M$  there are
$\tilde{b}_i \in \tilde{F}_{k-1}$  with $\tilde{b}_i \preceq_k b_i$ for all $i \in
I$. Consider $M^{(k-1)} \owns \tilde{\beta} := \sum\nolimits_{i \in I}{\tilde{b}_i}$. We have
$\tilde{\beta} \preceq_k \beta$ because $b_i - \tilde{b_i} \in \bar{S}^{(k)}$
implies $\sum\nolimits_{i
  \in I}{b_i - \tilde{b}_i} = \beta -\tilde{\beta} \in \bar{S}^{(k)}$. 
Additionally 
\begin{equation}
\pi_{k-1}(Q_{\tilde{\beta}}^{(k-1)}) = \pi_{k-1}(Q_{\beta}^{(k-1)}) =
\bigoplus\nolimits_{i \in I}^{(k-1)}{\pi_{k-1}(Q_{b_i}^{(k-1)})}
 = \bigoplus\nolimits_{i \in I}^{(k-1)}{\pi_{k-1}(Q_{\tilde{b}_i}^{(k-1)})},
\end{equation}
which together with $\tilde{\beta} =\sum\nolimits_{i \in I}{\tilde{b}_i}$ implies
that $Q_{\tilde{\beta}}^{(k-1)} = \bigoplus\limits_{i \in
  I}^{(k-1)}{Q_{\tilde{b}_i}^{(k-1)}}$ and our claim is proved. 
\end{proof}  

We are now in the position to proof Lemma~\ref{correctness of m-pl}.

\begin{proof}[Proof of Lemma~\ref{correctness of m-pl}]
As the following proof will be slightly complex, consider the
following outline of the proof first. 
\begin{enumerate}
\item We show that $G_{k-1} \subseteq M^{(k-1)}$.
\item We show that the set $G^{\omega = 0}$ is finite and that for
  all $b_i, b_j \in G^{\omega = 0}$ with $b_i \neq b_j$ we have $b_i
  \npreceq_k b_j$.
\item We show that $G_{k-1}$ is finite. This implies that Algorithm
  \ref{m-pl} terminates. At the same time 
  we show that the output set admits property (ii), i.e.,  $b_i
\npreceq_k b_j$ for $b_i \neq  b_j \in G_{k-1}$.
\item 
 We show that if $Q_{b}^{(k-1)}$ is an atomic partially extended
  fiber w.r.t.~$\mathop{\oplus}^{(k)}$ and $M$  then there is $\tilde{b}
  \preceq_{k} b$ with $\tilde{b} \in G_{k-1}$. This is property (i) of
  the output set.  
\end{enumerate}

\textit{Step 1.} \newline
It is clear that Algorithm \ref{m-pl} returns a set $G_{k-1}
\subseteq M^{k-1}$. This is guaranteed by the
monoid-normal-form algorithm, where we ensure that the elements added
lie in $M^{(k-1)}$.

\textit{Step 2.} \newline
We will now prove that the set $G^{\omega =0} $ is finite. 
To this aim we show finiteness of $\bar{G}^{\omega=0}$ first. Consider
the sequence $\bar{G}^{\omega =0} \setminus \{f \in \tilde{F}_{k-1} \, : \,
\omega_{k-1}(Q_f^{(k-1)}) = 0\} = \{f_1,f_2,\ldots \}$  produced in
lines 1--11 of the algorithm. Clearly $f_i \in 
M^{(k-1)}$ for all $i$. Additionally $f_i \npreceq_k f_j$ for all
$i < j$. Suppose not and let $f_i \preceq_k f_j$. Then
$Q_{f_j}^{(k-1)}=Q_{f_i}^{(k-1)} \mathop{\oplus}^{(k)}
Q_{f_j-f_i}^{(k-1)}$. As $f_j$ has been added to $\bar{G}^{\omega
  =0}$, the second criterion of 
the monoid-normal-form algorithm is not satisfied, i.e., $f_j-f_i
\notin M^{(k-1)}$. But $f_j-f_i \in
\bar{S}^{(k)}$ implies in particular that $f_j-f_i \in
M$ and as $Q_{f_j-f_i}^{(k-1)} \neq \emptyset$ we have $f_j - f_i \in
M^{(k-1)}$ which is a contradiction. 
Therefore $f_i \npreceq_k f_j$ for all
 $f_i,f_j \in \{f_1,f_2, \ldots \}$ with $i < j$. Finiteness of
 $\bar{G}^{\omega =0}$ follows with Lemma \ref{finitely many atomic
   fibers}. 

As $G^{\omega=0} \subseteq \bar{G}^{\omega =0}$ it is clear now that
$G^{\omega=0}$ is finite. Additionally lines 12--17 of Algorithm
\ref{m-pl} guarantee that $b_i \npreceq_k
b_j$ for all $b_i,b_j \in G^{\omega = 0}$ with $b_i \neq b_j$.

\textit{Step 3.} \newline
 Let $G^{\omega = \alpha} := \{b \in G_{k-1} \, : \,
\omega_{k-1}(Q_b^{(k-1)}) = \alpha\}$ for $\alpha \in
\Z_+$. Furthermore let $G^{\omega \leq \alpha} := \{b \in G_{k-1} \, : \,
\omega_{k-1}(Q_b^{(k-1)}) \leq \alpha\}$ for $\alpha \in \Z_+$. We
will show via induction that 
that $b_i \npreceq_k b_j$ for $b_i,b_j \in G^{\omega \leq \alpha}$
with $b_i \neq b_j$. Lemma \ref{finitely many atomic fibers} then
yields that $G^{\omega \leq \alpha}$ is finite.
Clearly we have $G_{k-1} =
\bigcup\nolimits_{\alpha\in \Z_+}{G^{\omega \leq \alpha}}$. Let $b_i,
b_j \in G_{k-1}$ with $b_i \neq b_j$. Then there is $\alpha \in \Z_+$
with $b_i,b_j \in G^{\omega \leq \alpha}$ yielding $b_i \npreceq_k
b_j$. The set $G_{k-1}$ admits property $(ii)$ of the output set thus
which together with  Lemma \ref{finitely many atomic fibers} yields
that $G_{k-1}$ is finite.

% We will then show that every element $b \in G^{\omega \leq \alpha}$ is
% the right-hand side of an atomic partially extended fiber
% $Q_b^{(k-1)}$ w.r.t.~$\mathop{\oplus}^{(k)}$ and $M$. This
% intermediate result will help us to show that $G_{k-1}$ is finite.

We will show via induction that $G^{\omega \leq \alpha}$ is finite.
With step 2 of the proof we know that
our claim is proved for $\alpha = 0$. Suppose that our assertions are
true for all integers lower or equal than $\alpha$. We will prove our
claim for $\alpha +1$. Let $b_i, b_j \in G^{\omega \leq \alpha
  +1}$ and suppose  $b_i \preceq_k b_j$. There are several
cases possible:
\begin{enumerate}
\item[(i)]
$b_i, b_j \in \tilde{F}_{k-1}$ \newline
This contradicts input property $(ii)$ of the input set
$\tilde{F}_{k-1}$. 
\item[(ii)]
$b_i \in \tilde{F}_{k-1}$, $b_j \notin \tilde{F}_{k-1}$ \newline
This contradicts the if-clause of Algorithm \ref{m-nF} because $b_i$
then is an appropriate reducer of $b_j$. 
\item[(iii)]
$b_i \notin \tilde{F}_{k-1}$, $b_j \in \tilde{F}_{k-1}$ \newline
As $\omega_{k-1}(Q_{b_j -b_i}^{(k-1)}) = 0$ and as $G^{\omega = 0}$
is completed before $G^{\omega \geq 1}$ we know that there is
$\bar{b} \in G^{\omega = 0}$ with $\bar{b} \preceq_{k} b_j-b_i
\preceq_k b_j$. But this is a contradiction to lines 18-21 of
Algorithm \ref{m-pl} because in this case $b_j$ would not have been added to
$G^{\omega \geq 1}$ then.
\item[(iv)] $b_i, b_j \notin \tilde{F}_{k-1}$ \newline
Depending on whether either $b_i$ has been added to $G^{\omega =
  \alpha + 1}$ befre $b_j$ was added or not we either have a
contradiction to the if-clause of Algorithm \ref{m-nF} or to the
else-clause of this algorithm.
\end{enumerate}

We know via induction that $G^{\omega \leq \alpha}$ admits property
$(ii)$ of the output set. We will now show that this is also true for
$G^{\omega \leq \alpha +1}$. Let $b_i, b_j \in G^{\omega \leq \alpha
  +1}$ and suppose $b_i \preceq_k b_j$. By induction, the previous
discussion and monotonicity of the weight-function $\omega_k(\cdot)$:
$b_i \in G^{\omega \leq \alpha}$ and $b_j \in G^{\omega =
  \alpha+1}$. But this contradicts the if-clause of Algorithm
\ref{m-nF}. Therefore $G^{\omega \leq \alpha+1}$ admits property
$(ii)$ of the output set which had to be proved.

\textit{Step 4.} \newline
Let $b \in M^{(k-1)}$ such that $Q_{b}^{(k-1)}$ is atomic
with respect to $\mathop{\oplus}^{(k)}$ and $M$. With
Lemma~\ref{Representation} we 
know that there is $\tilde{b} \in M$, $\tilde{b} \preceq_k b$,
admitting a representation 
\eqref{representation of Q}:
\begin{displaymath}
Q_{\tilde{b}}^{(k-1)} = \bigoplus\nolimits_{i \in
  I}^{(k-1)}{Q_{b_i}^{(k-1)}},
\end{displaymath}
where $b_i \in \tilde{F}_{k-1}$. We will show that $\tilde{b} \in
G_{k-1}$. The above representation implies in
particular that every $z=(z_1,z_2,z_3) \in
 Q_{\tilde{b}}^{(k-1)}$ can be
 written as $(z_1,z_2,z_3) = \sum\nolimits_{i \in I}{(z^i_1,z^i_2,z^i_3)}$ with
 $(z^i_1,z^i_2,z^i_3) \in Q_{b_i}^{(k-1)}$. In particular: 
\begin{equation}
\label{kleiner_gleich}
(z^i_1,z^i_2,z^i_3) \red_{k-1} (z_1,z_2,z_3)
\end{equation}
 for all $i$. If $Q_{b_i}^{(k-1)} \ni (z_1^i,z_2^i,z_3^i)
 \sqsubseteq_{k} (z_1,z_2,z_3)$ was valid this then would imply:
 $Q_{\tilde{b}}^{(k-1)} = 
 \bigoplus\nolimits_{i \in I}^{(k)}{Q_{b_i}^{(k-1)}}$. 

Let $R_{\tilde{b},k}^{(k-1)} =
\{(\bar{z}^1_1,\bar{z}^1_2,\bar{z}^1_3), \ldots,
(\bar{z}_1^t,\bar{z}_2^t,\bar{z}_3^t)\}$ be the set of representatives
of the $\red_k$-minimal elements in $Q_{\tilde{b}}^{(k-1)}$ according
to Definition \ref{minimal elements}. With Lemma \ref{minimal
  elements suffice} we know that it is sufficient to analyze the
$\red_k$-minimal elements in a partially extended fiber to decide
decomposability w.r.t.~$\mathop{\oplus}^{(k)}$.

From 
all representations $Q_{\tilde{b}}^{(k-1)} = 
\bigoplus\nolimits_{j \in J}^{(k-1)}{Q_{b_j}^{(k-1)}}$ with $b_j \in
G_{k-1}$ and $\pi_k(Q_{b_j}^{(k-1)}) \neq \pi_k(Q_{0}^{(k-1)})$
and where the $\red_{k-1}$-minimal elements in 
$R_{\tilde{b},k}^{(k-1)}$ are represented as
$(\bar{z}_1^i,\bar{z}_2^i,\bar{z}_3^i) = \sum\nolimits_{j \in 
  J}{(z_1^{i,j},z_2^{i,j},z_3^{i,j})}$ with $(z^{i,j}_1,z_2^{i,j},z_3^{i,j}) \in
Q_{b_j}^{(k-1)}$ for $i=1, \ldots, t$, choose a representation and
elements $(z^{i,j}_1,z_2^{i,j},z_3^{i,j})$ such that the sum
\begin{equation}
\label{sum_elements}
\sum\limits_{i=1}^{t}{\sum\limits_{j \in J}{||(z_1^{i,j},z_2^{i,j})||_1}}
\end{equation}
is minimal. 

% Note that in every such representation of
% $Q_{\tilde{b}}^{(k-1)}$ the summands admit a weight
% $\omega_{k-1}(Q_{b_j}^{(k-1)}) = 0$. 

By the triangle inequality we have
\begin{equation}
\label{norm_inequality}
\sum\limits_{i=1}^{t}{\sum\limits_{j \in J}{||(z_1^{i,j},z_2^{i,j})||_1}}
\geq \sum\limits_{i=1}^{t}{||(\bar{z}_1^i,\bar{z}_2^i)||_1}
\end{equation}
Herein equality holds if and only if all $(z_1^{i,j},z_2^{i,j})$ have the
same sign pattern as $(\bar{z}_1^i,\bar{z}_2^i)$, $i=1, \ldots, t$,
that is if and only if we
have $(z_1^{i,j},z_2^{i,j},z_3^{i,j}) \sqsubseteq_{k}
(\bar{z}_1^i,\bar{z}_2^i,\bar{z}_3^i)$ for all $j \in J$ and all 
$i=1, \ldots, t$. Thus if we have equality in \eqref{norm_inequality}
for such a minimal representation $Q_{\tilde{b}}^{(k-1)} =
\bigoplus\nolimits_{j \in J}^{(k-1)}{Q_{b_j}^{(k-1)}}$ then by
Lemma~\ref{minimal elements suffice} 
$Q_{\tilde{b}}^{(k-1)} = \bigoplus\nolimits_{j \in
  J}^{(k)}{Q_{b_j}^{(k-1)}}$ and as $\pi_k(Q_{b_j}^{(k-1)}) \neq
\pi_k(Q_{0}^{(k-1)})$ and as $Q_b^{(k-1)}$ and thus
$Q_{\tilde{b}}^{(k-1)}$ is atomic
w.r.t.~$\mathop{\oplus}^{(k)}$ and $M$ this representation must be
trivial and we are done.

Therefore let us assume, that 
\begin{equation}
\label{norm_inequality_>}
\sum\limits_{i=1}^{t}{\sum\limits_{j \in J}{||(z_1^{i,j},z_2^{i,j})||_1}}
> \sum\limits_{i=1}^{t}{||(\bar{z}_1^i,\bar{z}_2^i)||_1}
\end{equation}
In the following, we construct a new representation
$Q_{\tilde{b}}^{(k-1)} = \bigoplus\nolimits_{j' \in
  J'}^{(k-1)}{Q_{b_j'}^{(k-1)}}$ and elements $(\tilde{z}_1^{i,j},\tilde{z}_2^{i,j},\tilde{z}_3^{i,j})$
whose corresponding sum \eqref{sum_elements} is smaller than the
minimally chosen sum. This contradiction proves that we have indeed
equality in \eqref{norm_inequality} and our claim is proved.

From \eqref{norm_inequality_>} and from \eqref{kleiner_gleich},
i.e., $z_1^{i,j} \sqsubseteq \bar{z}_1^i$ for
all $i$ and $j$, we conclude that there are indices $i_0,j_1,j_2$ such
that $z_2^{{i_0},{j_1}} \cdot z_2^{{i_0},{j_2}} < 0$. As $b_{j_1}, b_{j_2} \in
G^{\omega = 0}$ the sum $b_{j_1} + b_{j_2}$ was built during the
algorithm. We have $\omega_{k-1}(Q_{b_{j_1}+b_{j_2}}^{(k-1)}) =
\omega_{k-1}(Q_{b_{j_1}}^{(k-1)}) + \omega_{k-1}(Q_{b_{j_2}}^{(k-1)}) = 0$ and thus there is
no partially extended fiber with weight $\omega_{k-1}$ greater than
$0$ which reduces $Q_{b_{j_1}+b_{j_2}}^{(k-1)}$.
Consequently the partially extended
fiber $Q_{b_{j_1}+b_{j_2}}^{(k-1)}$ was either reduced to $Q_{0}^{(k-1)}$ by
sets $Q_{b_{j''}}^{(k-1)}$, $j'' \in J''$, during the else-clause of
the monoid-normal-form algorithm or the vector $b_{j_1} +b_{j_2}$ has
been added to the set $\bar{G}^{\omega =0}$. Then either $b_{j_1}
+b_{j_2} \in G^{\omega=0}$ or we find  sets $Q_{b_{j''}}^{(k-1)}$,
$j'' \in J''$, with $Q_{b_{j_1}+b_{j_2}}^{(k-1)}
=\bigoplus\nolimits_{j'' \in J''}^{(k)}{Q_{b_{j''}}^{(k-1)}}$ with
$b_{j''} \in G^{\omega = 0}$. In the former case, set $J'':=\{j''\}$
with $b_{j''} := b_{j_1}+b_{j_2}$.

This gives representations 
\begin{gather}
\notag
(z_1^{i,{j_1}},z_2^{i,{j_1}},z_3^{i,{j_1}}) + (z_1^{i,{j_2}},z_2^{i,{j_2}},z_3^{i,{j_2}}) =
\sum\limits_{j'' \in J''}{(\tilde{z}_1^{i,{j''}}, \tilde{z}_2^{i,{j''}},
  \tilde{z}_3^{i,{j''}})},\\
\notag
 (\tilde{z}_1^{i,{j''}}, \tilde{z}^{i,{j''}},
  \tilde{z}_3^{i,{j''}}) \in Q_{b_{j''}}^{(k-1)}, \; (\tilde{z}_1^{i,{j''}},
  \tilde{z}_2^{i,{j''}},\tilde{z}_3^{i,{j''}}) \sqsubseteq_{k} (z_1^{i,{j_1}},z_2^{i,{j_1}},z_3^{i,{j_1}}) +
  (z_1^{i,{j_2}},z_2^{i,{j_2}},z_3^{i,{j_2}})
\end{gather}
    for $i=1, \ldots, t$. As all $(\tilde{z}_1^{i,{j''}},
  \tilde{z}_2^{i,{j''}})$ lie in the same orthant as $(z_1^{i,{j_1}},z_2^{i,{j_1}}) +
  (z_1^{i,{j_2}},z_2^{i,{j_2}})$ we get:
\begin{equation*}
\biggl \lVert \sum\limits_{j''\in J''}{(\tilde{z}_1^{i,{j''}},
  \tilde{z}_2^{i,{j''}})}\biggr\rVert_1 = ||(z_1^{i,{j_1}},z_2^{i,{j_1}}) +
  (z_1^{i,{j_2}},z_2^{i,{j_2}})||_1 \leq ||(z_1^{i,{j_1}},z_2^{i,{j_1}})||_1 +
  ||(z_1^{i,{j_2}},z_2^{i,{j_2}})||_1
\end{equation*}
with strict inequality for $i=i_0$.
Thus, by replacing in $Q_{\tilde{b}}^{(k-1)} = \bigoplus\nolimits_{j \in
  J}^{(k-1)}{Q_{b_j}^{(k-1)}}$ the term $Q_{b_{j_1}}^{(k-1)} \mathop{\oplus}^{(k-1)}
Q_{b_{j_2}}^{(k-1)}$ by $\bigoplus\nolimits_{j'' \in
  J''}^{(k)}{Q_{b_{j''}}^{(k-1)}}$ we arrive at a new representation
$Q_{\tilde{b}}^{(k-1)} =\bigoplus\nolimits_{j' \in J'}^{(k-1)}{Q_{b_{j'}}^{(k-1)}}$
whose corresponding sum \eqref{sum_elements} is at most
\begin{equation*}
  \sum\limits_{i=1}^{t}{\sum\limits_{j' \in J'}{||(z_1^{i,{j'}},z_2^{i,{j'}})||_1}}
< \sum\limits_{i=1}^{t}{\sum\limits_{j \in J}{||(z_1^{i,j},z_2^{i,j})||_1}}
\end{equation*}
contradicting the minimality of the representation
$Q_{\tilde{b}}^{(k-1)} = \bigoplus\nolimits_{j \in 
  J}^{(k-1)}{Q_{b_j}^{(k-1)}}$. Therefore we have equality in
\eqref{norm_inequality} and thus $\tilde{b} \in G_{k-1}$ concluding
our proof. 
\end{proof}

\subsection{Intersecting with the appropriate orthant and testing reducibility}
\label{the sorting and reducing step}
In this subsection we want to illustrate the step of the
project-and-lift algorithm which follows the completion procedure in
each lifting step. This ``intersection and reducibility test'' is
illustrated by the dashed arrow in Figure \ref{intersection}.

\begin{figure}[ht]
\centering
\ifpdf
    \input{intersection.pdf_t}
    \else
    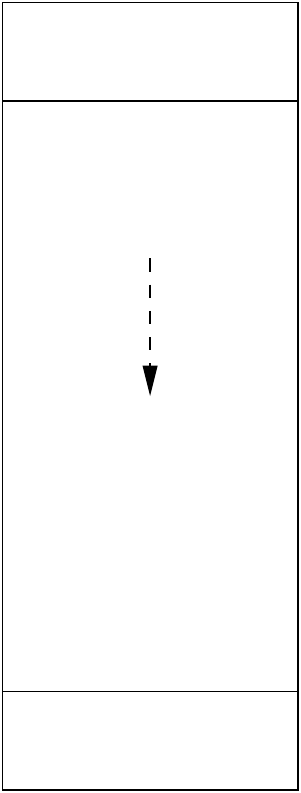
    \fi
\caption{Intersecting with the appropriate orthant and dropping
  reducible partially extended fibers}
\label{intersection}
\end{figure}

\begin{algorithm}[ht]
\caption{Intersecting and testing reducibility}
\label{sort and reduce}
\begin{algorithmic}[1]
\REQUIRE{A set $G_{k-1} \subseteq M^{(k-1)}$ with the
properties: 
\begin{enumerate}
\item[(i)]
For every right-hand side $b \in M^{(k-1)}$ of a partially
extended fiber $Q_b^{(k-1)}$ which is atomic
w.r.t.~$\mathop{\oplus}^{(k)}$ and $M$ there exists $\tilde{b} \in
G_{k-1}$ with $\tilde{b} \preceq_{k} b$. 
\item[(ii)]
$b_i \npreceq_{k} b_j$ for $b_i, b_j \in
G_{k-1}$ with $b_i \neq b_j$
\end{enumerate}}
\ENSURE{A set $F_k \subseteq M^{(k)}$ of right-hand sides
%every element in $F_k$ is the right-hand side of an
%atomic partially extended fiber of order $k$
%w.r.t.~$\koplus{(k)}$. 
with:
\begin{enumerate}
\item[(i)]
For every right-hand side $b \in M^{(k)}$ of a partially
extended fiber $Q_b^{(k)}$ which is atomic
w.r.t.~$\mathop{\oplus}^{(k)}$ and $M$ there exists $\tilde{b} \in
F_k$ with $\tilde{b} \preceq_{k} b$. 
\item[(ii)]
$b_i \npreceq_{k} b_j$ for $b_i, b_j \in
F_{k}$ with $b_i \neq b_j$.
\end{enumerate} }
\STATE $F_{k} := \emptyset$
\FORALL {$b \in G_{k-1}$ with $Q_{b}^{(k)} \neq \emptyset$}
\IF {$Q_{b}^{(k)} \neq Q_{g}^{(k)} \mathop{\oplus}^{(k)}
  Q_{b-g}^{(k)}$ for all $b \neq g \in G_{k-1}$ with $b-g \in M$}
\STATE $F_k := F_k \cup \{b\}$
\ENDIF
\ENDFOR
\STATE \textbf{return } $F_k$
\end{algorithmic}
\end{algorithm}

\begin{Lemma}
\label{correctness of sort and reduce}
Algorithm \ref{sort and reduce} with input set $G_{k-1}$ terminates
and computes a set $F_k \subseteq M^{(k)}$ with the properties
(i) and (ii).
\end{Lemma}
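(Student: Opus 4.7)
The plan is to verify termination, then property~(ii), and then work out property~(i), which is the main content. \emph{Termination} is immediate: the set $G_{k-1}$ is finite by Lemma~\ref{correctness of m-pl}, so the outer loop iterates finitely often; for each $b$ the inner test checks the equality $Q_b^{(k)} = Q_g^{(k)} \mathop{\oplus}^{(k)} Q_{b-g}^{(k)}$ for finitely many $g \in G_{k-1}$, which by Lemma~\ref{minimal elements suffice} can be decided on the finite representative set $R_{b,k}^{(k)}$ of $\red_k$-minimal elements (Remark~\ref{finiteness of minimal elements}). \emph{Property~(ii)} is inherited: since $F_k \subseteq G_{k-1}$, any two distinct elements of $F_k$ are also distinct in $G_{k-1}$, and the input property~(ii) of $G_{k-1}$ yields $\preceq_k$-incomparability.

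For \emph{property~(i)}, fix $b \in M^{(k)}$ with $Q_b^{(k)}$ atomic w.r.t.~$\mathop{\oplus}^{(k)}$ and $M$. I seek $\tilde{b} \in F_k$ with $\tilde{b} \preceq_k b$. The plan has three steps. (a) Locate $\bar{b} \in G_{k-1}$ with $\bar{b} \preceq_k b$: the key observation is that any decomposition $Q_b^{(k-1)} = Q_{b_1}^{(k-1)} \mathop{\oplus}^{(k)} Q_{b_2}^{(k-1)}$ restricts, by intersection with $\Z_+^k \times \Z^{n-k}$, to a decomposition $Q_b^{(k)} = Q_{b_1}^{(k)} \mathop{\oplus}^{(k)} Q_{b_2}^{(k)}$, because for $u \in Q_b^{(k)}$ with $u = v + w$ and $v, w \red_k u$, the condition $u^{(k)} \geq 0$ together with $v^{(k)} w^{(k)} \geq 0$ and $|v^{(k)}|,|w^{(k)}| \leq u^{(k)}$ forces $v^{(k)}, w^{(k)} \geq 0$. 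Following the pattern of Lemma~\ref{Representation} and iteratively decomposing $Q_b^{(k-1)}$ into atomic-at-level-$(k-1)$ summands (whose right-hand sides lie in $G_{k-1}$ by input property~(i)), atomicity of $Q_b^{(k)}$ combined with the restriction argument forces all but one summand to have trivial $\pi_k$-projection at level $k$; the remaining summand delivers the desired $\bar{b}$. (b) Show that $Q_{\bar{b}}^{(k)}$ is atomic w.r.t.~$\mathop{\oplus}^{(k)}$ and $M$: pick $c = (0,\ldots,0,\lambda_{k+1},\ldots,\lambda_n)$ with $Ac = b - \bar{b} \in \bar{S}^{(k)}$; the bijection $z \mapsto z+c$ from $Q_{\bar{b}}^{(k)}$ to $Q_b^{(k)}$ preserves the first $k$ coordinates, and any non-trivial decomposition of $Q_{\bar{b}}^{(k)}$ transports, by absorbing $c$ into one summand (using $b_2 + c \in M$ and $\pi_k(Q_{b_2+c}^{(k)}) = \pi_k(Q_{b_2}^{(k)})$), into a non-trivial decomposition of $Q_b^{(k)}$, contradicting atomicity. (c) Show $\bar{b} \in F_k$: if not, the test produces $g \in G_{k-1}$ with $g \neq \bar{b}$, $g \neq 0$, $\bar{b} - g \in M$, and $Q_{\bar{b}}^{(k)} = Q_g^{(k)} \mathop{\oplus}^{(k)} Q_{\bar{b}-g}^{(k)}$. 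Input property~(ii) applied to the distinct pairs $(0,g)$ and $(g,\bar{b})$ in $G_{k-1}$ gives $g \notin \bar{S}^{(k)}$ and $\bar{b} - g \notin \bar{S}^{(k)}$, hence $\pi_k(Q_g^{(k)}) \neq \pi_k(Q_0^{(k)})$ and $\pi_k(Q_{\bar{b}-g}^{(k)}) \neq \pi_k(Q_0^{(k)})$, making the decomposition non-trivial in the sense of Definition~\ref{defining atomic fibers} and contradicting atomicity of $Q_{\bar{b}}^{(k)}$ from~(b).

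The hard part will be step~(a): the input property~(i) of $G_{k-1}$ is phrased at level~$k-1$, whereas the hypothesis concerns atomicity at level~$k$. The orthant-restriction argument above is what bridges the two levels, and the finiteness supplied by Lemma~\ref{finitely many atomic fibers} ensures that the iterative extraction of a non-trivial summand terminates.
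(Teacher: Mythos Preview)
You have misread the input specification of Algorithm~\ref{sort and reduce}. Input property~(i) of $G_{k-1}$ concerns order-$(k-1)$ fibers $Q_b^{(k-1)}$ that are atomic with respect to $\mathop{\oplus}^{(k)}$, \emph{not} $\mathop{\oplus}^{(k-1)}$; the passage from $\mathop{\oplus}^{(k-1)}$-atomicity to $\mathop{\oplus}^{(k)}$-atomicity has already been carried out by the completion procedure (Algorithm~\ref{m-pl}), whose output is precisely $G_{k-1}$. Consequently, what you call the ``hard part'' --- your step~(a) --- is unnecessary, and as written it is also unjustified: your claim that ``atomic-at-level-$(k-1)$ summands have right-hand sides in $G_{k-1}$ by input property~(i)'' does not follow, since input~(i) only covers $\mathop{\oplus}^{(k)}$-atomic fibers, and a $\mathop{\oplus}^{(k-1)}$-atomic fiber need not be $\mathop{\oplus}^{(k)}$-atomic.

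The paper's argument for property~(i) is essentially a single line: if $Q_b^{(k)}$ is atomic w.r.t.\ $\mathop{\oplus}^{(k)}$ and $M$, then so is $Q_b^{(k-1)}$, because $Q_b^{(k)} \subseteq Q_b^{(k-1)}$ and any $\mathop{\oplus}^{(k)}$-decomposition of $Q_b^{(k-1)}$ restricts --- via the very orthant argument you wrote down --- to one of $Q_b^{(k)}$; input~(i) then supplies $\tilde b \in G_{k-1}$ with $\tilde b \preceq_k b$ directly. Your steps~(b) and~(c) go beyond what the paper makes explicit in arguing that such a $\tilde b$ actually survives into~$F_k$. For property~(ii), your inheritance argument from $F_k \subseteq G_{k-1}$ is valid; the paper instead argues directly from the algorithm's reducibility test: if $b_2 \preceq_k b_1$ with both in $F_k$, then $Q_{b_1}^{(k)} = Q_{b_2}^{(k)} \mathop{\oplus}^{(k)} Q_{b_1-b_2}^{(k)}$ and $b_1 - b_2 \in \bar S^{(k)} \subseteq M$, so $b_1$ would have been rejected.
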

\begin{proof}
Termination of Algorithm \ref{sort and reduce} is clear. We have
to show correctness of the algorithm. 
But this is easy as well: if $b \in M^{(k-1)}$ and $Q_{b}^{(k)} \neq
\emptyset$ then $b \in M^{(k)}$. Therefore $F_{k} \subseteq
M^{(k)}$. If $Q_{b}^{(k)}$ is atomic w.r.t.~$\mathop{\oplus}^{(k)}$ and $M$,
then $Q_{b}^{(k-1)}$ is atomic w.r.t.~$\mathop{\oplus}^{(k)}$ and $M$ as well,
because $Q_{b}^{(k)} \subseteq Q_{b}^{(k-1)}$ and every decomposition
of $Q_{b}^{(k-1)}$ would give a decomposition of $Q_{b}^{(k)}$.
This characteristic immediately implies property (i) of the output set
because we have property (i) of the input set.

To see property (ii) of the output set, suppose that there are
$b_1,b_2 \in F_k$ with $b_2 \preceq_{k} b_1$. Then, $Q_{b_1}^{(k)} =
Q_{b_2}^{(k)} \mathop{\oplus}^{(k)} Q_{b_1-b_2}^{(k)}$ and $b_1-b_2
\in \bar{S}^{(k)}$. In particular, $b_1-b_2 \in M$ which is a
contradiction as $b_1$ would not have been added to $F_k$ in this
case. This yields that $b_i \npreceq_{k} b_j$ for all $b_i,b_j \in
F_k$. Therefore Algorithm \ref{sort and reduce} is correct and
terminates. \end{proof}

\subsection{Refining the preorder}
\label{preprocessing the input set}

There is one more step to explain in the $k$-th lifting step of the
project-and-lift algorithm. This step is illustrated by the dotted
arrow in Figure \ref{refining}; it is implemented in
Algorithm~\ref{preprocessing algorithm}.

\begin{figure}[ht]
\centering

\ifpdf
    \input{refining-partial-order.pdf_t}
    \else
    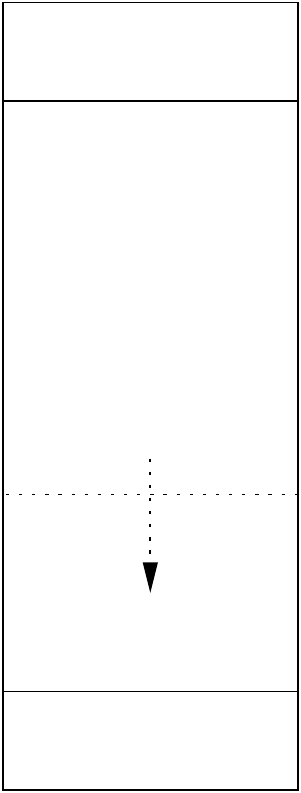
    \fi

\caption{Refining the preorder to prepare the $k+1$-st lifting step}
\label{refining}
\end{figure}

\begin{algorithm}[ht]
\caption{Refining the preorder}
\label{preprocessing algorithm}
\begin{algorithmic}[1]
\REQUIRE{A set $F_k \subseteq M^{(k)}$ of right-hand sides
%every element in $F_k$ is the right-hand side of an
%atomic partially extended fiber of order $k$
%w.r.t.~$\koplus{(k)}$. 
with:
\begin{enumerate}
\item[(i)]
For every right-hand side $b \in M^{(k)}$ of a partially
extended fiber $Q_b^{(k)}$ which is atomic
w.r.t.~$\mathop{\oplus}^{(k)}$ and $M$ there exists $\tilde{b} \in
F_k$ with $\tilde{b} \preceq_{k} b$. 
\item[(ii)]
$b_i \npreceq_{k} b_j$ for $b_i, b_j \in
F_{k}$
\end{enumerate}}
\ENSURE{A set $\tilde{F}_{k} \subseteq M^{(k)}$ with the
following properties:
\begin{enumerate}
\item[(i)]
For every right-hand side $b \in M^{(k)}$ of a partially
extended fiber $Q_b^{(k)}$ which is atomic
w.r.t.~$\mathop{\oplus}^{(k)}$ and $M$ there exists $\tilde{b} \in
\tilde{F}_{k}$ with $\tilde{b} \preceq_{k+1} b$.
\item[(ii)]
$b_i \npreceq_{k+1} b_j$ for all $b_i, b_j \in \tilde{F}_{k}$ with
$b_i \neq b_j$.
\end{enumerate}}

%%\STATE $\tilde{F}_{k} := \emptyset$
\STATE Compute a set ${\mathcal L} = \{s^{(1)}, \ldots,
s^{(r)}\} \subseteq \bar{S}^{(k)}$ with:
\begin{displaymath}
\forall s \in \bar{S}^{(k)} \; \exists s^{(j)} \in
{\mathcal L} \text{ with } s^{(j)} \preceq_{k+1} s
\end{displaymath}
\STATE Set $\tilde{F}_k := \bigcup\limits_{b \in
  F_k}{\bigcup\limits_{s \in {\mathcal L}}\{b + s\}}$.
\FORALL{$b \in \tilde{F}_{k}$}
\IF{$\exists \bar{b} \in \tilde{F}_{k}$ with $\bar b \neq b$ and $\bar{b}
  \preceq_{k+1} b$}
\STATE $\tilde{F}_{k} := \tilde{F}_{k} \setminus \{b\}$
\ENDIF
\ENDFOR
\STATE \textbf{return } $\tilde{F}_k$
\end{algorithmic}
\end{algorithm}

\begin{Lemma}
\label{termination and correctness of preprocessing}
Algorithm \ref{preprocessing algorithm} terminates and is correct.
\end{Lemma}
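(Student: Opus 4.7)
My plan is to establish termination and the two output properties separately; the only real substance is in step~1 (the construction of $\mathcal{L}$), which I treat as the main obstacle.

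For termination, the critical claim is that step~1 produces a \emph{finite} set $\mathcal{L}$. Note that $\bar S^{(k)}$ is a submonoid of the finitely generated monoid $M$, and the relation $\preceq_{k+1}$ restricted to $\bar S^{(k)}$ is defined by $s' \preceq_{k+1} s$ iff $s - s' \in \bar S^{(k+1)}$. Using the same association with monomial ideals as in Lemma~\ref{finitely many irreducible pairs}(i) (now with variables for a chosen expression of each $s \in \bar S^{(k)}$ in $M$'s generators, together with variables for the free lattice directions $A_{k+2},\ldots,A_n$), Theorem~\ref{Theorem: Maclagan} implies that the set of $\preceq_{k+1}$-minimal elements of $\bar S^{(k)}$ is finite; I would take this set as $\mathcal L$. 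Step~2 then produces $\tilde F_k$ as a finite union of finite sets, and step~3 is a finite loop over this finite set, so the algorithm terminates.

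For output property~(i), let $b \in M^{(k)}$ be such that $Q_b^{(k)}$ is atomic w.r.t.\ $\mathop{\oplus}^{(k)}$ and $M$. By input property~(i) there exists $b' \in F_k$ with $b' \preceq_k b$, i.e.\ $s := b - b' \in \bar S^{(k)}$. By the defining property of $\mathcal L$ there exists $s^{(j)} \in \mathcal L$ with $s^{(j)} \preceq_{k+1} s$, hence $s - s^{(j)} \in \bar S^{(k+1)}$. Setting $\hat b := b' + s^{(j)}$, after step~2 we have $\hat b \in \tilde F_k$, and
\[
b - \hat b \;=\; (b - b') - s^{(j)} \;=\; s - s^{(j)} \;\in\; \bar S^{(k+1)},
\]
so $\hat b \preceq_{k+1} b$. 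If $\hat b$ survives step~3 we are done; otherwise it is removed only because some $\bar b \in \tilde F_k$ with $\bar b \ne \hat b$ satisfies $\bar b \preceq_{k+1} \hat b$, and by transitivity of $\preceq_{k+1}$ (Definition~\ref{partial ordering}) we still have $\bar b \preceq_{k+1} b$. Since $\tilde F_k$ is finite, iterating along a finite descending chain produces a $\tilde b$ in the final $\tilde F_k$ with $\tilde b \preceq_{k+1} b$, as required.

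Output property~(ii) is then immediate from the exit condition of step~3: no surviving element has a distinct $\preceq_{k+1}$-predecessor in the final set. The one subtlety worth flagging is that the reduction loop must not accidentally eliminate an entire $\preceq_{k+1}$-equivalence class at once (if two elements $\preceq_{k+1}$-reduce each other, a naive implementation would remove both); this is handled either by an explicit tie-breaking rule or by the same transitivity/finiteness argument as above applied inside an equivalence class.
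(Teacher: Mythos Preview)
Your correctness arguments for output properties (i) and (ii) are essentially those of the paper and are fine. The substantive difference and the gap are both in your treatment of step~1.

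The paper does not merely argue that a finite $\mathcal L$ exists; since the lemma concerns termination of an \emph{algorithm}, it \emph{constructs} $\mathcal L$ explicitly. It first computes generators $\bar s_1,\dots,\bar s_p$ of the monoid $\bar S^{(k+1)}$ as minimal homogeneous solutions of a linear system, then forms the finite set $F=\bigl\{\sum_i\lambda_i\bar s_i:0\le\lambda_i<1\bigr\}\cap\{\sum_{i>k}\lambda_iA_i:\lambda_i\in\Z\}$, and for each $f\in F$ reads off $\mathcal L_f$ from the minimal inhomogeneous solutions of a second linear system encoding membership in $(f+\bar S^{(k+1)})\cap\bar S^{(k)}$; finally $\mathcal L=\bigcup_{f\in F}\mathcal L_f$. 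This is what actually justifies calling step~1 a terminating computation.

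Your proposal instead takes $\mathcal L$ to be ``the set of $\preceq_{k+1}$-minimal elements of $\bar S^{(k)}$'' and appeals to Maclagan for finiteness. Here there is a real gap: $\preceq_{k+1}$ is only a preorder, and the paper explicitly warns at the end of Section~\ref{Section: Preliminaries of the project-and-lift algorithm} that a $\preceq_l$-minimal element (in the strict sense of Definition~\ref{partial ordering}) need not exist below every $b$. Concretely, if $M$ is not pointed then $\bar S^{(k+1)}$ can contain both $s$ and $-s$ with $s\ne0$, producing nontrivial $\preceq_{k+1}$-equivalence classes none of whose members are minimal; your $\mathcal L$ could then fail the covering requirement $\forall s\,\exists s^{(j)}\in\mathcal L:\,s^{(j)}\preceq_{k+1}s$. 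You already flag exactly this phenomenon for step~3 but overlook it for step~1. The fix is to take one representative per $\preceq_{k+1}$-minimal \emph{equivalence class} and to invoke Lemma~\ref{finitely many atomic fibers}(i) (which is the ready-made consequence of Maclagan here) twice: once for finiteness of the set of representatives, and once to exclude infinite strictly descending chains so that every $s\in\bar S^{(k)}$ reaches a minimal class. With that repair your existence argument goes through, though it remains non-constructive compared with the paper's. A minor omission: you should also record $\tilde F_k\subseteq M^{(k)}$, which holds because $b'+s\in M$ and any $z\in Q_{b'}^{(k)}$ shifted by $(0,\dots,0,\lambda_{k+1},\dots,\lambda_n)$ lands in $Q_{b'+s}^{(k)}$.
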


\begin{proof} 
Termination of the above algorithm is clear once we have shown that
we can construct a finite set ${\mathcal L}$ with the property that 
\begin{displaymath}
\forall s\in \bar{S}^{(k)} \; \exists s^{(j)} \in
{\mathcal L} \text{ with } s^{(j)} \preceq_{k+1} s.
\end{displaymath}

For this, let us first construct generators for the monoid
$\bar{S}^{(k+1)}$. These can be found by considering the homogeneous
system of linear equations
\[
s=\sum_{j=k+2}^n \lambda_jA_j = \sum_{r=1}^t \alpha_r m_r,
\]
in the variables $s\in\Z^d$, $\alpha\in\Z^t_+$, and in
$\lambda\in\Z^{n-k-1}$. If we extract the values of $s$ for all
finitely many minimal homogeneous solutions of this linear system, we
obtain a generating set $\{\bar{s}_1,\ldots,\bar{s}_p\}$ for the
monoid $\bar{S}^{(k+1)}$.

Now let us consider the finite set
\begin{displaymath}
F=\left\{\sum_{i=1}^p
\lambda_i\bar{s}_i\colon 0\leq\lambda_i<1,i=1,\ldots,p\right\}\cap\{\lambda_{k+1}A_{k+1}+\ldots+\lambda_nA_n\colon
\lambda_i\in\Z\}.
\end{displaymath}
For each $f\in F$ we now consider the set
$\left(f+\bar{S}^{(k+1)}\right)\cap\bar{S}^{(k)}$ and construct a finite set
${\mathcal L}_f$ of vectors in
$\left(f+\bar{S}^{(k+1)}\right)\cap\bar{S}^{(k)}$ such that  
\begin{displaymath}
\forall s \in \left(f+\bar{S}^{(k+1)}\right)\cap\bar{S}^{(k)} \;
\exists s^{(j)} \in {\mathcal L}_f \text{ with } s^{(j)}
\preceq_{k+1} s. 
\end{displaymath}
Then ${\mathcal L}=\bigcup_{f\in F} {\mathcal L}_f
\subseteq\bar{S}^{(k)}$ is finite and has the desired property.

In order to construct  ${\mathcal L}_f$, let us consider the 
inhomogeneous system of linear equations and inequalities  
\[
f+\sum_{j=k+2}^n \lambda_jA_j = f+\sum_{r=1}^t \alpha_r m_r = s =
\sum_{j=k+1}^n \mu_jA_j = \sum_{r=1}^t \beta_r m_r
\]
in the variables $s\in\Z^d$, $\alpha,\beta\in\Z^t_+$, and in
$\lambda\in\Z^{n-k-1},\mu\in\Z^{n-k}$. The left-hand part states 
$s\in f+\bar{S}^{(k+1)}$ and the right-hand part encodes
$s\in\bar{S}^{(k)}$. Then a suitable set ${\mathcal L}_f$ can be found
by computing the finitely many minimal inhomogeneous solutions to this
linear system and by collecting the corresponding values of $s$. We
have thus proved that we may construct a finite set ${\mathcal L}$
admitting the claimed property. It remains to prove that the set
$\tilde{F}_k$ constructed from ${\mathcal L}$ admits the properties
claimed in Algorithm \ref{preprocessing algorithm}.

It is clear that $\tilde{F}_k \subseteq M^{(k)}$ because
$F_k \subseteq M^{(k)}$ and ${\mathcal L} \subseteq \bar{S}^{(k)}
\subseteq M$. Now let $b \in M^{(k)}$ be the right-hand side of an atomic
partially extended fiber w.r.t.~$\mathop{\oplus}^{(k)}$ and
$M$. Because of property (i) of the input-set, we find $\bar{b} \in
F_k$ with $\bar{b} \preceq_{k} b$, i.e., there is $\bar{s} \in
\bar{S}^{(k)}$ with $b = \bar{b} + \bar{s}$. As $\bar{s} \in
\bar{S}^{(k)}$ there is $s \in {\mathcal L}$ with
$s \preceq_{k+1} \bar{s}$ implying that there is
$\tilde{s} \in \bar{S}^{(k+1)}$ with $\bar{s} = s + \tilde{s}$. 
Let $\tilde{b} := \bar{b} + s$. Then clearly $\tilde{b} \preceq_{k+1}
b$. Thus either
$\tilde{b} \in \tilde{F}_{k}$  or
there is $b' \in \tilde{F}_{k}$ with $b' \preceq_{k+1}
\tilde{b} \preceq_{k+1} b$. This concludes the proof. \end{proof} 

\subsection{Initial input set and final output set}
The previous subsections have shown how one step of the
project-and-lift algorithm works. We have to perform $n$ of these
steps to obtain a set $G^*$ which contains the right-hand sides of all
atomic fibers w.r.t.~the monoid $M = \langle m_1, \ldots, m_t
\rangle$.
We start with the initial input set $F_0 = \{0\}$. This is a valid
input set because every non-empty extended fiber with right-hand side
$b \in M$, i.e., $b \in M^{(0)}$, also lies in $S^{(0)}$. 

\subsection{Simplifications for the lattice case}
\label{Section: Simplifications for the lattice case}
As already mentioned at the beginning of section~\ref{Section:
  Preliminaries of the project-and-lift algorithm}, the
project-and-lift algorithm to compute the atomic fibers of a matrix
with right-hand side $b$ varying on a lattice $\Lambda$ is much
easier to treat than the case of general monoids. 

The simplifications of the project-and-lift algorithm are based on
the fact that the difference of two arbitrary lattice vectors $b_1,b_2
\in \Lambda$, $b_1 -b_2$, is again a lattice vector. This fact has
implications for the preorder $\preceq_l$ on the right-hand side vectors
$b \in \Lambda$. Let $A \in \Z^{d \times n}$, $\Lambda \subseteq \Z^d$
a lattice and $0 \leq l \leq n$. Consider the preorder
introduced in section \ref{Section: Preliminaries of the
  project-and-lift algorithm}: $b_1 \preceq_l b_2$ if $b_2 -b_1 \in
\bar{S}^{(l)} = \{\lambda_{l+1}A_{l+1} + \ldots + \lambda_nA_n \, :
\, \lambda_i \in \Z\} \cap \Lambda$. As $\Lambda$ is a lattice we
obtain that $b_1-b_2 \in \bar{S}^{(l)}$ as well. This means: $b_1
\preceq_l b_2 \, \Leftrightarrow \, b_2 \preceq_l b_1$.  In other words, the
preorder is in fact an equivalence relation.

Our aim in this subsection is to simplify the refining step in our
project-and-lift algorithm.
Recall from section \ref{the completion procedure} that the input set
$\tilde{F}_{k-1}$ satisfies the following two properties which ensure
finiteness and correctness of the algorithm:
\begin{enumerate}
\item[(i)]
For every right-hand side $b \in M^{(k-1)}$ of a partially
extended fiber $Q_b^{(k-1)}$ which is atomic
w.r.t.~$\mathop{\oplus}^{(k-1)}$ and $M$ there exists $\tilde{b} \in
\tilde{F}_{k-1}$ with $\tilde{b} \preceq_{k} b$. 
\item[(ii)]
$b_i \npreceq_{k} b_j$ for $b_i \neq b_j \in
G_{k-1}$.
\end{enumerate}
In this subsection, we will define a new input set $\bar{F}_{k-1}$ of
Algorithm \ref{m-pl} which may be computed much easier than the set
$\tilde{F}_{k-1}$. Having defined this new input set we will expose
some properties of it. Finally we will show that the new input set
$\bar{F}_{k-1}$ is sufficient to guarantee finiteness and correctness
of the completion procedure, i.e., of Algorithm \ref{m-pl}.

Let $F_{k-1}$ be the output set of Algorithm \ref{sort and reduce} and
consider the following integer program:
\begin{equation}
\label{ip}
\begin{array}{r@{\; }l@{\,}c@{}rcl}
\min  & \lambda \\
\st  & \lambda_{k}A_{k} & + &\sum\limits_{i \geq k+1}{\lambda_i A_i}
& = &\sum\limits_{j=1}^{k}{\mu_j l_j}\\
& & & \lambda_{k} - \lambda &\leq &0\\
& & & - \lambda_{k} - \lambda &\leq& 0\\
& & & \lambda &\geq &1\\
& && \lambda, \lambda_i, \mu_j \in \Z 
\end{array}
\end{equation}

There are two possible cases: either the integer program \eqref{ip} is
infeasible or it admits an optimal solution $\lambda^*, \lambda_i^*,
\mu_j^*$. Consider the former case first and let $b_1,b_2 \in \Lambda$
with $b_1 \preceq_{k-1} b_2$. Then $b_1 \preceq_k b_2$. This is the
case because $b_2 -b_1 \in \Lambda$ and $b_2 - b_1 = \sum\nolimits_{i\geq
  k}{\lambda_iA_i}$, $\lambda_i \in \Z$. If the absolute value of
$\lambda_k$ was greater 
or equal than $1$ the difference $b_2-b_1$ would imply a feasible
solution of the integer program \eqref{ip}. Thus $\lambda_k = 0$ and
therefore we have $b_1 \preceq_k b_2$. In this case we set
$\bar{F}_{k-1}:= F_{k-1}$. 

Now consider the latter case. Let $s :=
\sum\nolimits_{i=k}^{n}{\lambda_i^*A_i}$. Again there are two
possible cases: either $s \preceq_k -s$ or $s \npreceq_k -s$. In the
former case we set $\bar{F}_{k-1}:= F_{k-1} \cup \{s\}$, in the latter
case we set $\bar{F}_{k-1} := F_{k-1} \cup \{\pm s\}$.

\begin{Lemma}
\label{weak first property}
Let \eqref{ip} admit an optimal solution $\lambda^*, \lambda_i^*,
\mu_j^*$. We assume w.l.o.g. that $\lambda_k^* \in \Z_+$.
Let $Q_b^{(k-1)}$ be atomic w.r.t.~$\mathop{\oplus}^{(k-1)}$ and
$\Lambda$. Then there is $\bar{b} \in F_{k-1}$ and $\lambda_b \in \Z$
with $\bar{b} + \lambda_bs \preceq_k b$.
\end{Lemma}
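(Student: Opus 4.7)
The plan is to use property~(i) of the input set $F_{k-1}$ to reduce $b$ to something already covered, and then exploit the fact that $\Lambda$ is a lattice (i.e.\ a group, not merely a monoid) in order to align the $A_k$-coefficient with $\lambda_k^*$ by subtracting the appropriate integer multiple of $s$.

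First, since $Q_b^{(k-1)}$ is atomic with respect to $\mathop{\oplus}^{(k-1)}$ and $\Lambda$, property~(i) of the sort-and-reduce output $F_{k-1}$ yields some $\tilde b\in F_{k-1}$ with $\tilde b\preceq_{k-1}b$. Unfolding the definition of $\preceq_{k-1}$, this gives integers $\lambda_k,\lambda_{k+1},\ldots,\lambda_n$ such that
\[
b - \tilde b \;=\; \lambda_k A_k + \sum_{i=k+1}^n \lambda_i A_i \;\in\; \Lambda.
\]
The key structural observation is that the set
\[
S \;:=\; \Bigl\{\mu\in\Z : \exists\,\mu_{k+1},\ldots,\mu_n\in\Z\text{ with }\mu A_k+\sum_{i=k+1}^n \mu_i A_i\in\Lambda\Bigr\}
\]
is a \emph{subgroup} of $\Z$, since tuples $(\mu,\mu_{k+1},\ldots,\mu_n)$ can be added or negated componentwise and $\Lambda$ is closed under both addition and negation. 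By the formulation of~\eqref{ip} and the w.l.o.g.\ assumption $\lambda_k^*\in\Z_+$, the value $\lambda_k^*$ is the least positive element of $S$, so $S=\lambda_k^*\Z$. In particular the coefficient $\lambda_k$ appearing above must be of the form $q\lambda_k^*$ for some $q\in\Z$.

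Setting $\bar b:=\tilde b\in F_{k-1}$ and $\lambda_b:=q$, the fact that $s=\sum_{i=k}^n\lambda_i^*A_i\in\Lambda$ (which holds by feasibility of the optimum in~\eqref{ip}) gives $b-\bar b-\lambda_b s\in\Lambda$, and a direct computation shows
\[
b - \bar b - \lambda_b s \;=\; \sum_{i=k+1}^n(\lambda_i-q\lambda_i^*)A_i,
\]
which lies in $\bar S^{(k)}$, so $\bar b+\lambda_b s\preceq_k b$ as required. The only subtle step is the subgroup identification $S=\lambda_k^*\Z$; this is precisely where the lattice assumption on $\Lambda$ is essential, and it would fail for a general monoid $M$, which is why the general case needs the heavier preprocessing of Section~\ref{preprocessing the input set}.
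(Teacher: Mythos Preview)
Your proof is correct and follows essentially the same approach as the paper: obtain $\bar b\in F_{k-1}$ with $\bar b\preceq_{k-1}b$ from property~(i) of $F_{k-1}$, then show that the $A_k$-coefficient of $b-\bar b$ is divisible by $\lambda_k^*$ and subtract the corresponding multiple of~$s$. The paper carries out the divisibility step by Euclidean division and a contradiction with the optimality of~\eqref{ip}, while you package the same reasoning as the subgroup identification $S=\lambda_k^*\Z$; the underlying argument is identical.
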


\begin{proof}
Let $b \in \Lambda$ with $Q_b^{(k-1)}$ atomic
w.r.t.~$\mathop{\oplus}^{(k-1)}$ and $\Lambda$. Then there is
$\bar{b} \in F_{k-1}$ with $\bar{b} \preceq_{k-1} b$, a consequence of
$F_{k-1}$ being the output set of Algorithm \ref{sort and reduce}. Let
$ \bar{s}:= 
b-\bar{b} = \sum\nolimits_{i\geq k}{\lambda_iA_i}$. Then there is $n
\in \Z$ and $0 \leq \bar{r} < \lambda_k^*$ with $\lambda_k = n \cdot
\lambda_k^* + \bar{r}$. We will show that $\bar{r} =0$. To this aim
consider $s_0 := \bar{s} -n \cdot s = \bar{r}A_k + \sum\nolimits_{i \geq
  k+1}{(\lambda_i - n \cdot \lambda_i^*)A_i}$. As $s, \bar{s} \in
  \Lambda$ we have $s_0 \in \Lambda$. If $\bar{r} \neq 0$ the
  feasible solution of \eqref{ip} implied by $s_0$ admits an
  objective value lower than $\lambda^*$, because $\bar{r} <
  \lambda_{k}^*$. This contradicts the optimality of the solution
  $\lambda^*, \lambda_i^*, \mu_j^*$. Therefore $\bar{r}= 0$ and thus
  $\bar{b} +n \cdot s \preceq_k b$.
\end{proof}

\begin{Lemma}
\label{second property}
For $b_i,b_j \in \bar{F}_{k-1}$  with $b_i \neq
b_j$ we have $b_i \npreceq_k b_j$.
\end{Lemma}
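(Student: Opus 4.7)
The plan is to split on whether the integer program~\eqref{ip} is infeasible or feasible (matching the two cases in the construction of $\bar F_{k-1}$), and in the feasible case to analyse the three possible types of pairs drawn from $\bar F_{k-1}$. The overarching principle is that $\preceq_k$ is a refinement of $\preceq_{k-1}$ (since $\bar S^{(k)}\subseteq \bar S^{(k-1)}$), so any $\preceq_k$-relation is automatically a $\preceq_{k-1}$-relation, and the latter is forbidden between distinct elements of $F_{k-1}$ by the input property of the preceding Algorithm~\ref{sort and reduce}.

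If~\eqref{ip} is infeasible, then $\bar F_{k-1}=F_{k-1}$ and I would first show $\bar S^{(k-1)}=\bar S^{(k)}$: any $v\in\bar S^{(k-1)}$ has a representation $\sum_{i\ge k}\lambda_i A_i\in\Lambda$, and a nonzero $\lambda_k$ would yield a feasible point of~\eqref{ip}. With the two monoids equal, $\preceq_{k-1}$ and $\preceq_k$ coincide, and the conclusion follows verbatim from the input property of $F_{k-1}$.

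Assume now~\eqref{ip} is feasible, with $s=\sum_{i\ge k}\lambda_i^* A_i$ and $\lambda_k^*\ge 1$ WLOG. Let $b_i\ne b_j$ in $\bar F_{k-1}$. If both lie in $F_{k-1}$, the containment $\bar S^{(k)}\subseteq \bar S^{(k-1)}$ combined with the input property settles the case. If $\{b_i,b_j\}=\{s,-s\}$ we are in the ``latter'' subcase of the construction, entered precisely when $s\npreceq_k -s$; symmetry of $\preceq_k$ in the lattice setting also yields $-s\npreceq_k s$. The substantive case is a mixed pair $b\in F_{k-1}$ with $b\preceq_k s$ (all other mixed orderings reduce to this by lattice-symmetry). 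Then $s-b\in \bar S^{(k)}\subseteq \bar S^{(k-1)}$; together with $s\in\bar S^{(k-1)}$ this gives $b\in\bar S^{(k-1)}$, i.e., $0\preceq_{k-1}b$. Invoking the invariant $0\in F_{k-1}$, which propagates from the initial $F_0=\{0\}$ through Algorithms~\ref{m-pl} and~\ref{sort and reduce}, the input property forces $b=0$, leaving only the degenerate sub-case $s\in\bar S^{(k)}$. This is ruled out by the minimality of $\lambda_k^*$ in~\eqref{ip}: $s\in\bar S^{(k)}$ would provide a second representation of $s$ without an $A_k$ contribution, and subtracting from $s=\sum_{i\ge k}\lambda_i^* A_i$ would exhibit a strictly better IP point, contradicting optimality.

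The hard part is this mixed-pair step, which simultaneously needs the bookkeeping that $0$ persists in $F_{k-1}$ throughout the preceding algorithms and the careful use of IP minimality to exclude the degenerate case $s\in\bar S^{(k)}$. Everything else is essentially a translation of the chain $\bar S^{(k)}\subseteq \bar S^{(k-1)}$ together with the case distinction already built into the definition of $\bar F_{k-1}$.
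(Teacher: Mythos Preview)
Your proof is correct and follows essentially the same three-case split as the paper: both of you treat (1) $b_i,b_j\in F_{k-1}$ via the output property of Algorithm~\ref{sort and reduce}, (2) the mixed pair by showing that $b\in F_{k-1}$ with $b\preceq_k \pm s$ forces $b\in\bar S^{(k-1)}$, i.e.\ $0\preceq_{k-1}b$, and (3) the pair $\{s,-s\}$ directly from the case distinction in the construction of $\bar F_{k-1}$.

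Your write-up is in fact more careful than the paper's: you make explicit the infeasible-IP case (which the paper disposes of in the discussion preceding the lemma rather than in the proof), and you explicitly name the invariant $0\in F_{k-1}$ that the paper uses tacitly when it says ``$0\preceq_{k-1}b$ contradicts $b$ being an element of $F_{k-1}$''. One small point: your final step, ruling out the degenerate possibility $s\in\bar S^{(k)}$ by ``subtracting to get a strictly better IP point'', does not work as stated---subtracting the two representations of $s$ yields a feasible point of~\eqref{ip} with the \emph{same} value $|\lambda_k^*|$, not a smaller one. The paper does not address this corner case at all, so your treatment is still at least as complete as the paper's; but if you want to close it cleanly you should argue differently (or simply observe that this degenerate situation does not affect the subsequent use of $\bar F_{k-1}$).
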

\begin{proof} 
There are a few different cases to consider: \smallbreak
\textit{Case 1:}  $b_i, b_j \in F_{k-1}$. 
Then $b_i \npreceq_k b_j$ as $F_{k-1}$ is the output set of Algorithm
\ref{sort and reduce}. \smallbreak
\textit{Case 2:} $b_i \in F_{k-1}$ and $b_j = s$.
We have to show that $s \npreceq_k b_j$. Suppose not and consider
$b_j-s = \sum\nolimits_{i \geq k+1}{\lambda_iA_i}$. But then $b_j =
\sum\nolimits_{i \geq k+1}{\lambda_iA_i} + \sum\nolimits_{j \geq
  k}{\lambda_j^*A_j}$ and thus $0 \preceq_{k-1} b_j$ which
contradicts $b_j$ being an element of $F_{k-1}$. \smallbreak
\textit{Case 3:} $b_i = s$ and $b_j = -s$. 
Here we have $s \npreceq_k -s$ by our assumptions.
\end{proof}

Lemma \ref{second property} implies that $\bar{F}_{k-1}$ defined as
above satisfies property $(ii)$ of the input set of Algorithm
\ref{m-pl}. We continue giving another property of the set
$\bar{F}_{k-1}$. 

\begin{Lemma}
\label{new representation}
Let $b \in \Lambda^{k-1}$. Then there is $\tilde{b} \preceq_k b$ with 
\begin{equation}
Q_{\tilde{b}}^{(k-1)} = \bigoplus\limits_{i}^{(k-1)}{Q_{b_i}^{(k-1)}}
\quad \text{ where } \quad b_i \in \bar{F}_{k-1}.
\end{equation}
\end{Lemma}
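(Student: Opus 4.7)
The plan is to mimic the proof of Lemma~\ref{Representation}, adapted to the fact that $\bar{F}_{k-1}$ is obtained from $F_{k-1}$ by possibly adjoining $s$ or $\pm s$: where Lemma~\ref{Representation} substitutes a single element of $\tilde{F}_{k-1}$ for each atomic summand, here I will substitute a sum of the form $\bar{c}_j + \lambda_j s$ built from several elements of $\bar{F}_{k-1}$, with Lemma~\ref{weak first property} providing its existence.

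First I would choose a decomposition $Q_b^{(k-1)} = \bigoplus\nolimits_{j\in J}^{(k-1)} Q_{c_j}^{(k-1)}$ into partially extended fibers that are atomic with respect to $\mathop{\oplus}^{(k-1)}$ and $\Lambda$ (which exists by Lemma~\ref{reduction and monoid-atomic} together with Algorithm~\ref{Algorithm to decompose fibers into sums of atomic fibers} applied iteratively). For each $c_j$ I would then manufacture a sum $b^{(j)} := \sum\nolimits_i b_i^{(j)}$ with $b_i^{(j)} \in \bar{F}_{k-1}$ and $b^{(j)} \preceq_k c_j$, by a case split on~\eqref{ip}. If~\eqref{ip} is infeasible, the discussion preceding Lemma~\ref{weak first property} shows that $\preceq_{k-1}$ and $\preceq_k$ coincide on $\Lambda$, so the reducer $\bar{c}_j \in F_{k-1}$ afforded by property~(i) of $F_{k-1}$ already satisfies $\bar{c}_j \preceq_k c_j$ and one takes $b^{(j)} := \bar{c}_j$. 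Otherwise Lemma~\ref{weak first property} yields $\bar{c}_j \in F_{k-1}$ and $\lambda_j \in \Z$ with $\bar{c}_j + \lambda_j s \preceq_k c_j$: if $s \npreceq_k -s$ (case~3 in the definition of $\bar{F}_{k-1}$), both $\pm s$ lie in $\bar{F}_{k-1}$ and I split $\lambda_j s$ into $|\lambda_j|$ copies of $\mathrm{sgn}(\lambda_j)\,s$; if $s \preceq_k -s$ (case~2), then $2s \in \bar{S}^{(k)}$ by the symmetry of $\preceq_k$ on the lattice, so $\lambda_j$ may be reduced modulo $2$ and $b^{(j)}$ is simply $\bar{c}_j$ or $\bar{c}_j + s$.

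Setting $\tilde{b} := \sum\nolimits_j b^{(j)} = \sum\nolimits_{j,i} b_i^{(j)}$, the relation $\tilde{b} \preceq_k b$ follows by summing the differences $c_j - b^{(j)} \in \bar{S}^{(k)}$. For the desired identity $Q_{\tilde{b}}^{(k-1)} = \bigoplus\nolimits^{(k-1)} Q_{b_i^{(j)}}^{(k-1)}$, I would copy the projection-equality chain used at the end of the proof of Lemma~\ref{Representation}:
\[
\pi_{k-1}(Q_{\tilde{b}}^{(k-1)}) = \pi_{k-1}(Q_{b}^{(k-1)}) = \bigoplus\nolimits_{j}^{(k-1)} \pi_{k-1}(Q_{c_j}^{(k-1)}) = \bigoplus\nolimits_{j,i}^{(k-1)} \pi_{k-1}(Q_{b_i^{(j)}}^{(k-1)}),
\]
where the third equality uses that $\pm s \in \bar{S}^{(k-1)}$, hence $\pi_{k-1}(Q_{\pm s}^{(k-1)}) = \pi_{k-1}(Q_0^{(k-1)})$ contains the zero vector, which serves as a trivial partner in each $\mathop{\oplus}^{(k-1)}$-sum. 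Combined with $\tilde{b} = \sum_{j,i} b_i^{(j)}$, the fiber-level identity follows exactly as in the proof of Lemma~\ref{Representation}.

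The main obstacle is case~2: it must be verified that reducing $\lambda_j$ modulo~$2$ is harmless, which rests on the implication $s \preceq_k -s \Rightarrow 2s \in \bar{S}^{(k)}$, valid only thanks to $\bar{S}^{(k)}$ being closed under negation in the lattice setting. A minor secondary point is the final lifting from the projection-level $\mathop{\oplus}^{(k-1)}$-decomposition back to the fiber level, handled, as in Lemma~\ref{Representation}, by adjusting the last $n-k+1$ components of one summand using a vector of $Q_0^{(k-1)}$ whose first $k-1$ components vanish.
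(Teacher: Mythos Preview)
Your proposal is correct and follows essentially the same approach as the paper: decompose $Q_b^{(k-1)}$ into atomic summands, replace each summand via Lemma~\ref{weak first property} by $\bar c_j+\lambda_j s$, sum up to get $\tilde b$, and verify the $(k-1)$-restricted Minkowski decomposition via the projection identity as in Lemma~\ref{Representation}. The only difference is that the paper dispenses with your case~2 analysis by the blanket assumption ``w.l.o.g.\ $\bar F_{k-1}=F_{k-1}\cup\{\pm s\}$'', whereas you justify explicitly (via $2s\in\bar S^{(k)}$ when $s\preceq_k -s$) why this reduction is harmless---your treatment is the more careful one here.
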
 
\begin{proof}
As $b \in \Lambda^{(k-1)}$ there is $\bar{b} \preceq_{k-1} b$ with 
$$Q_{\bar{b}}^{(k-1)} = \bigoplus\limits_{j}^{(k-1)}{Q_{b_j}^{(k-1)}}
\quad \text{ where } b_j \in F_{k-1}. $$
This representation is a consequence of $F_{k-1}$ being the output
set of Algorithm \ref{sort and reduce}.
If the integer program \eqref{ip} is infeasible then $\bar{b} \preceq_k
b$ and our claim 
is proved. Therefore let \eqref{ip} admit an optimal
solution. W.l.o.g.~we assume that $\bar{F}_{k-1} = F_{k-1} \cup \{\pm
s\}$. Consider a decomposition of $Q_b^{(k-1)}$ into a restricted
Minkowski sum of partially extended fibers which are atomic
w.r.t.~this restricted Minkowski sum and the lattice $\Lambda$:
\begin{equation}
Q_b^{(k-1)} = \bigoplus\limits_{i}^{(k-1)}{Q_{b_i}^{(k-1)}}.
\end{equation}
With Lemma \ref{weak first property} we know that for each $b_i$
there is $\bar{b}_i \in F_{k-1}$ and $\lambda_{b_i} \in \Z$ such that 
$\bar{b}_i + \lambda_{b_i} \cdot s \preceq_k b_i$. We set
$\tilde{b}:= \sum\nolimits_{i}{\bar{b}_i +\lambda_{b_i} \cdot
  s}$. Then we have $\tilde{b} \preceq_k b$ and
$$Q_{\tilde{b}}^{(k-1)} =
\bigoplus\limits_{i}^{(k-1)}{(Q_{\bar{b}_i}^{(k-1)}
  \mathop{\oplus}^{(k-1)} \lambda_{b_i} Q_{s}^{(k-1)})} =
\bigoplus\limits_{i}^{(k-1)}{Q_{\bar{b_i}}^{(k-1)}}
\mathop{\oplus}^{(k-1)} (\sum\limits_{i}{\lambda_{b_i}})Q_s^{(k-1)}.$$
This proves our claim.
\end{proof}

\begin{algorithm}[ht]
\caption{Refining the preorder (equivalence relation) in the lattice case}
\label{refining equivalence classes (2)}
\begin{algorithmic}[1]
\REQUIRE{A lattice $\Lambda = \langle l_1, \ldots, l_t \rangle$ and a
  set $F_k \subseteq \Lambda^{(k)}$ of right-hand sides 
with:
\begin{enumerate}
\item[(i)]
For every right-hand side $b \in \Lambda^{(k)}$ of a partially
extended fiber $Q_b^{(k)}$ which is atomic
w.r.t.~$\mathop{\oplus}^{(k)}$ and $\Lambda$ there exists $\tilde{b} \in
F_k$ with $\tilde{b} \preceq_{k} b$. 
\item[(ii)]
$b_i \npreceq_{k} b_j$ for $b_i, b_j \in
F_{k}$ with $b_i \neq b_j$
\end{enumerate}}
\ENSURE{The set $\bar{F}_{k} \subseteq \Lambda^{(k)}$ defines as above.}
\STATE Solve the following integer program:
\begin{equation}
%\label{ip}
\begin{array}{r@{\; }l@{\,}c@{}rcl}
\min  & \lambda \\
\st  & \lambda_{k+1}A_{k+1} & + &\sum\limits_{i \geq k+2}{\lambda_i A_i}
& = &\sum\limits_{j=1}^{k}{\mu_j l_j}\\
& & & \lambda_{k+1} - \lambda &\leq &0\\
& & & - \lambda_{k+1} - \lambda &\leq& 0\\
& & & \lambda &\geq &1\\
& && \lambda, \lambda_i, \mu_j \in \Z 
\end{array}
\end{equation}

\IF{\eqref{ip} is feasible}
\STATE Let $\lambda^*,\lambda_i^*, \mu_j^*$ be an optimal solution of \eqref{ip}.
\STATE Set $s:=  \sum\nolimits_{i \geq
  k+1}{\lambda^*_iA_i}$.
\IF{$s \preceq_{k+1} -s$}
\STATE \textbf{return } $\bar{F}_{k} := F_k \cup \{s\}$
\ELSE
\STATE  \textbf{return } $\bar{F}_{k} := F_k \cup \{\pm s\}$
\ENDIF
\ELSE
\STATE \textbf{return } $\bar{F}_{k} := F_k$
\ENDIF
\end{algorithmic}
\end{algorithm}

Now we want to show that the input set $\bar{F}_{k-1}$ is sufficient
to guarantee finiteness and correctness of Algorithm \ref{m-pl}. An
input set admitting properties (i) and~(ii) is sufficient to do
so. We have seen in Lemma \ref{second property} that our set
$\bar{F}_{k-1}$ admits property~(ii). It admits property~(i) as
well if the integer program \eqref{ip} is infeasible. But it does not
admit this property in general if the integer program \eqref{ip} is
feasible. Note that in the proof of Algorithm \ref{m-pl} property~(i)
is only used to guarantee a representation
\eqref{representation of Q} according to Lemma \ref{Representation}
with the projection of the summands satisfying $\pi_{k}(Q_b^{(k-1)})
\neq \pi_{k}(Q_0^{(k-1)})$. But with Lemma \ref{new representation}
this representation may be guaranteed as well. Furthermore
$\pi_k(Q_{b}^{(k-1)}) \neq \pi_k(Q_0^{(k-1)})$. This is clear for $b
\in F_{k-1}$ because $0 \npreceq_{k-1} b$. As $0 \npreceq_k s$ we
have an analogue result for $\pi_k(Q_s^{(k-1)})$. This finally
implies that Algorithm \ref{m-pl} terminates and is correct when
given input set $\bar{F}_{k-1}$.

Besides the modification of the input set, Algorithm \ref{m-pl} stays the
same. Of course we may drop all tests if $b-g \in \Lambda$ during the
normal-form algorithm because for $b,g \in
\Lambda$ it is clear that the difference $b-g \in \Lambda$. The same
is valid for Algorithm \ref{sort and reduce}. It stays the same
except for the dropping of tests whether $b-g \in \Lambda$. 

Algorithm \ref{refining} is substituted by the
above Algorithm \ref{refining equivalence classes (2)}. It does not
compute the set $\tilde{F}_{k}$ but the set $\bar{F}_{k}$.

\begin{Lemma}
Algorithm \ref{refining equivalence classes (2)} terminates and is
correct. 
\end{Lemma}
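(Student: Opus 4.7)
The plan is to prove the two claims separately: termination reduces to finite computability of each line, while correctness means that the returned set $\bar F_k$ (i) literally matches the three-case definition given in the preceding paragraphs, and (ii) is fit to serve as the input of Algorithm \ref{m-pl} in the next lifting step. The nontrivial content is already packaged in Lemmas \ref{weak first property}, \ref{second property} and \ref{new representation}, so the bulk of the proof amounts to citing them in the right order and checking the compatibility with the hypotheses used in the proof of Lemma \ref{correctness of m-pl}.

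For termination I would argue that the algorithm performs exactly one call to the integer program \eqref{ip}: its objective $\lambda$ is bounded below by $1$, so either infeasibility is detected or an optimal $\lambda^\ast$ is returned in finite time by standard integer programming. Extracting $s=\sum_{i\ge k+1}\lambda_i^\ast A_i$ is finite arithmetic, and the test $s\preceq_{k+1}-s$ is, in the lattice case, a finite membership question in the finitely generated monoid $\bar S^{(k+1)}$; as in the proof of Lemma \ref{termination and correctness of preprocessing}, a generating set of $\bar S^{(k+1)}$ can be precomputed by a Hilbert-basis (\FourTiTwo{}) call, so this step finishes in finite time as well.

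For correctness the output evidently coincides with the three-case definition of $\bar F_{k-1}$ from the text (with indices shifted by one). To verify that $\bar F_k$ is a legitimate input of the next run of Algorithm \ref{m-pl}, I would check the two input hypotheses. Property (ii) of the input set of Algorithm \ref{m-pl}, namely $b_i\npreceq_{k+1} b_j$ for distinct $b_i,b_j\in\bar F_k$, is Lemma \ref{second property}: its three case distinctions (both in $F_k$; one in $F_k$ and one of $\pm s$; both of $\pm s$) exhaust the possibilities. Property (i) splits on feasibility of \eqref{ip}: in the infeasible branch the paragraph preceding Lemma \ref{weak first property} shows that $\preceq_{k}$ and $\preceq_{k+1}$ coincide on $\Lambda^{(k)}$, so the inherited property (i) of $F_k$ with respect to $\preceq_k$ immediately gives the desired property of $\bar F_k$ with respect to $\preceq_{k+1}$; in the feasible branch the literal property (i) may fail, but Lemma \ref{new representation} supplies the \emph{representation} of every $Q_b^{(k)}$ that is atomic w.r.t.\ $\mathop{\oplus}^{(k)}$ and $\Lambda$ as a $\mathop{\oplus}^{(k)}$-sum of fibers with right-hand sides in $\bar F_k$.

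The main obstacle is the last point: when \eqref{ip} is feasible one has to argue that the weakened property (i) delivered by Lemma \ref{new representation} is precisely what is used inside the proof of Lemma \ref{correctness of m-pl}, and that the non-vanishing side condition $\pi_{k+1}(Q_{b_j}^{(k)})\neq \pi_{k+1}(Q_0^{(k)})$ on the summands is preserved. This is what the discussion immediately after Lemma \ref{new representation} carries out: for $b\in F_k$ the condition follows from $0\npreceq_k b$, and for $b=\pm s$ it follows from $0\npreceq_{k+1} s$, which in turn is enforced by the constraint $\lambda\ge 1$ in \eqref{ip}. Once these compatibilities are spelled out, termination and correctness of Algorithm \ref{refining equivalence classes (2)} are established.
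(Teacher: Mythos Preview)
Your proposal is correct and takes essentially the same approach as the paper, whose proof is the single sentence ``This is a direct consequence of the discussion in this subsection.'' You have simply spelled out that discussion---Lemmas \ref{weak first property}, \ref{second property}, \ref{new representation}, and the paragraph on the weakened input property---together with a termination argument that the paper leaves implicit.
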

\begin{proof}
This is a direct consequence of the discussion in this subsection.
\end{proof}

\clearpage

\section{First computational results}
\label{Section: First computational results}

We have created an implementation of the ``project-and-lift'' algorithm for
the lattice case (section~\ref{Section: Simplifications for the lattice case}).  The
implementation is written in Allegro Common Lisp~8.0 and C.  For the
computation of the minimal elements of partially extended fibers, we use the
library \texttt{libzsolve}, which is a part of 4ti2 \citep{4ti2}, version~1.3.1.  In this
section, we report on the computational experience with this code on several
test problems.  All computation times are given in CPU seconds on a 
Sun Fire V440 with UltraSPARC-IIIi processors
running at 1.6\,GHz.

\subsection{Results for number-partitioning problems}

We first consider the problem of partitioning a natural number $n$ into given
\emph{parts} (natural numbers) $a_1$, \dots, $a_k$ (with possible multiplicity).  To this
end, consider the set 
\begin{equation}
  P_n = \Bigl\{\, (x_1,\dots,x_k) \in \Z_+^k : n = \sum_{i=1}^k x_i \cdot a_i \,\Bigr\}.
\end{equation}
We are interested in a minimal set $\{n_1,\dots,n_q\}$ of natural numbers such
that the set $P_n$ of partitions of every number~$n$ is the Minkowski sum of
some of the sets $P_{n_j}$.   Thus we are interested in the atomic fibers
corresponding to the matrix 
\begin{equation}
  \left ( \begin{array}{c c c c c} 
      a_1 & a_2 & a_3 & \cdots & a_k 
    \end{array} \right ).
\end{equation}
We consider this problem for various sets of numbers $a_1$,\dots,$a_k$.
The results are shown in Table~\ref{table:number-part}.

\begin{table}[ht]
  \caption{Results for number-partitioning problems}
  \label{table:number-part}
  \begin{center}
  \def~{\hphantom0}
  \begin{tabular}{lcc}
    \toprule
    Parts & Atomic fibers & Time (s) \\
    \midrule
    1             & ~1 & ~~~~~1  \\
    1 2           & ~2 & ~~~~~1  \\
    1 2 3         & ~4 & ~~~~~1  \\
    1 2 3 4       & ~9 & ~~~~~1  \\
    1 2 3 4 5     & 32 & ~~~875  \\
    1 2 3 4 5 6   & 41 & $>$1000 \\
      2 3         & ~3 & ~~~~~1  \\
      2 3   5     & 14 & ~~~~~1  \\
      2 3   5   7 & 72 & 149661  \\
        3   5     & ~1 & ~~~~~1  \\
        3   5   7 & 30 & ~~~~~1  \\
        \bottomrule
  \end{tabular}
\end{center}
\end{table}

\subsection{Results for homogeneous number-partitioning problems}

Next we consider the problem of partitioning a natural number $n$ into given
natural numbers $a_1$, \dots, $a_k$ (with possible multiplicity), where we
prescribe the number of summands.  To this end, we consider the set
\begin{equation}
  P^m_n = \Bigl\{\, (x_1,\dots,x_k) \in \Z_+^k : n = \sum_{i=1}^k x_i \cdot a_i, \ 
  m = \sum_{i=1}^k x_i \,\Bigr\}.
\end{equation}
We are interested in a minimal set $\{(m_1, n_1),\dots,(m_q,n_q)\}$ of pairs 
$(m,n)$ such that the set $P^m_n$ of partitions of every number~$n$ into $m$
summands
is the Minkowski sum of some of the sets $P^{m_j}_{n_j}$.  Thus  we are interested in the atomic fibers
corresponding to the matrix  
\begin{equation}
\left ( \begin{array}{c c c c} 1 & 1 & \cdots & 1\\
    a_1 & a_2 & \cdots & a_k \end{array} \right ).
\end{equation}
Again we consider the problem for various sets of numbers $a_1$,\dots,$a_k$.
The results are shown in Table~\ref{table:number-part-homog}.  We remark that
the problem data $(1, 2, 3, 4)$ correspond to a problem equivalent to the one from
Example~\ref{Example: Atomic Fibers of Twisted Cubic}. 

\begin{table}[ht]
\caption{Results for homogeneous number-partitioning problems}
\label{table:number-part-homog}
{\def~{\hphantom0}
\begin{tabular}{lcc}
\toprule
Parts & Atomic fibers & Time (s)\\
\midrule
1                 & ~~~1 & ~~~~~1 \\    %hppi1
1 2               & ~~~2 & ~~~~~1 \\    %hppi2
1 2 3             & ~~~4 & ~~~~~1 \\    %hppi3
1 2 3 4           & ~~18 & ~~~~~1 \\    %hppi4 = x3
1 2 3 4 5         & ~~79 & ~~~~19 \\    % hppi5
1 2 3 5           & ~~12 & ~~~~~1 \\  % x4
1 2 3 6           & ~~35 & ~~~~~2 \\  % x5
1 2 3 7           & ~~19 & ~~~~~1 \\  % x6
1 2 3 8           & ~~58 & ~~~~30 \\  % x7
1 2 3 9           & ~~28 & ~~~~~2 \\  % x8
1 2 3 10          & ~~87 & ~~~206 \\ % x9
1 2 3 11          & ~~39 & ~~~~~6 \\ % x10
1 2 3 12          & ~122 & ~~1620 \\ % x11
1 2 3 13          & ~~52 & ~~~~21 \\ % x12
1 2 3 14          & ~163 & ~~5136 \\ % x13
1 2 3 15          & ~~67 & ~~~~72 \\  % x14
1 2 3 17          & ~~79 & ~~~216 \\ % x16
  2 3             & ~~~2 & ~~~~~1 \\ % homprime3
  2 3   5         & ~~~4 & ~~~~~1 \\ % homprime5
  2 3   5 7       & ~~26 & ~~~~~1 \\ % homprime7
  2 3   5 7    11 & ~262 & 152792 \\ % homprime11
\bottomrule
\end{tabular}}
\end{table}

\subsection{Results for Steinberger's sums of roots of unity}

One example that appears and was solved in \cite{Steinberger:04}
is the computation of the atomic fibers of the matrix
\[
\left(
\begin{array}{rrrrrrrrr}
1 & -1  &  0 & -1 &  1 &  0 &  0 &  0 & 0\\
0 &  1  & -1 &  0 & -1 &  1 &  0 &  0 & 0\\
0 &  0  &  0 &  1 & -1 &  0 & -1 &  1 & 0\\
0 &  0  &  0 &  0 &  1 & -1 &  0 & -1 & 1\\
\end{array}
\right).
\]
This matrix corresponds to a certain problem on $3\times 3$ tables
and has in fact $31$ atomic fibers and $79$ extended atomic
fibers.  The atomic fibers can be computed with our implementation in less
than one CPU second.

The next higher problem on $4\times 4$ tables leads to the matrix
\[
\left(
\begin{array}{rrrrrrrrrrrrrrrr}
1 & -1 &  0 &  0 & -1 &  1 &  0 &  0 &  0 &  0 &  0 &  0 &  0 &  0
& 0 & 0\\
0 &  1 & -1 &  0 &  0 & -1 &  1 &  0 &  0 &  0 &  0 &  0 &  0 &  0
& 0 & 0\\
0 &  0 &  1 & -1 &  0 &  0 & -1 &  1 &  0 &  0 &  0 &  0 &  0 &  0
& 0 & 0\\
0 &  0 &  0 &  0 &  1 & -1 &  0 &  0 & -1 &  1 &  0 &  0 &  0 &  0
& 0 & 0\\
0 &  0 &  0 &  0 &  0 &  1 & -1 &  0 &  0 & -1 &  1 &  0 &  0 &  0
& 0 & 0\\
0 &  0 &  0 &  0 &  0 &  0 &  1 & -1 &  0 &  0 & -1 &  1 &  0 &  0
& 0 & 0\\
0 &  0 &  0 &  0 &  0 &  0 &  0 &  0 &  1 & -1 &  0 &  0 & -1 &  1
& 0 & 0\\
0 &  0 &  0 &  0 &  0 &  0 &  0 &  0 &  0 &  1 & -1 &  0 &  0 & -1
& 1 & 0\\
0 &  0 &  0 &  0 &  0 &  0 &  0 &  0 &  0 &  0 &  1 & -1 &  0 &  0
& -1 & 1\\
\end{array}
\right).
\]
Our implementation was able to compute the~$12,675$ atomic fibers for this
matrix within 6.5 CPU days on a Sun Fire V440 with UltraSPARC-IIIi processors
running at 1.6\,GHz.

\clearpage
\bibliographystyle{plainnat}
\bibliography{iba-bib,francetelecom}

\providecommand\FFull[1]{#1} \providecommand\Full[1]. \providecommand\NP{NP}
  \providecommand\textem[1]{{\textbf{#1}}}
  \providecommand\with{{\kern1pt--\kern1pt}}
  \def\polhk#1{\setbox0=\hbox{#1}{\ooalign{\hidewidth
  \lower1.5ex\hbox{`}\hidewidth\crcr\unhbox0}}}
\begin{thebibliography}{16}
\providecommand{\natexlab}[1]{#1}
\providecommand{\url}[1]{\texttt{#1}}
\expandafter\ifx\csname urlstyle\endcsname\relax
  \providecommand{\doi}[1]{doi: #1}\else
  \providecommand{\doi}{doi: \begingroup \urlstyle{rm}\Url}\fi

\bibitem[4ti2 team()]{4ti2}
4ti2 team.
\newblock 4ti2 -- a software package for algebraic, geometric and combinatorial
  problems on linear spaces.
\newblock Available at {\url{http://www.4ti2.de}}.

\bibitem[Abu~Salem et~al.(2004)Abu~Salem, Gao, and Lauder]{Salem+Gao+Lauder:04}
Fatima Abu~Salem, Shuhong Gao, and Alan G.~B. Lauder.
\newblock Factoring polynomials via polytopes.
\newblock In \emph{Proceeding of International Symposium ISSAC 2004}, pages
  4--11, 2004.

\bibitem[Adams et~al.(1999)Adams, Ho{\c s}ten, Loustaunau, and
  Miller]{Adams+Hosten+Loustaunau+Miller:99}
William~W. Adams, Serkan Ho{\c s}ten, Phillippe Loustaunau, and J.~Lyn Miller.
\newblock {SAGBI} and {SAGBI}-{G}r{\"o}bner bases over principal ideal domains.
\newblock \emph{Journal of Symbolic Computation}, 27:\penalty0 31--47, 1999.

\bibitem[Cox et~al.(1992)Cox, Little, and O'Shea]{Cox-Little-O'Shea:1992}
David~A. Cox, John~B. Little, and Donald O'Shea.
\newblock \emph{Ideals, Varieties and Algorithms: An Introduction to
  Computational Algebraic Geometry and Commutative Algebra}.
\newblock Springer-Verlag, 1992.

\bibitem[Eisenschmidt et~al.(2006)Eisenschmidt, K{\"o}ppe, and
  Laugier]{EKL:2006}
Elke Eisenschmidt, Matthias K{\"o}ppe, and Alexandre Laugier.
\newblock Integer {M}inkowski programs and the design of survivable networks.
\newblock eprint arXiv:{\penalty0}math.OC/0610836, 2006.

\bibitem[Gr{\"u}nbaum(1967)]{Gruenbaum:67}
Branko Gr{\"u}nbaum.
\newblock \emph{Convex polytopes}.
\newblock Wiley Interscience, 1967.

\bibitem[Hemmecke(2002)]{Hemmecke:2003}
Raymond Hemmecke.
\newblock On the computation of {H}ilbert bases of cones.
\newblock In Arjeh~M. Cohen, Xiao~Shan Gao, and Nobuki Takayama, editors,
  \emph{Mathematical Software, ICMS 2002}. World Scientific, 2002.

\bibitem[Hemmecke(2006)]{Ray:Habil}
Raymond Hemmecke.
\newblock \emph{Representation of lattice point sets: Theory, Algorithms,
  Applications}.
\newblock Habilitation thesis, Otto-von-Guericke-Universit{\"a}t Magdeburg,
  2006.

\bibitem[Hemmecke and Schultz(2003)]{Hemmecke:SIP2}
Raymond Hemmecke and R{\"u}diger Schultz.
\newblock Decomposition of test sets in stochastic integer programming.
\newblock \emph{Mathematical Programming}, 94\penalty0 (2--3):\penalty0
  323--341, 2003.

\bibitem[Henk et~al.(2003)Henk, K{\"o}ppe, and
  Weismantel]{henk-koeppe-weismantel-2000:miptestset}
Martin Henk, Matthias K{\"o}ppe, and Robert Weismantel.
\newblock Integral decomposition of polyhedra and some applications in mixed
  integer programming.
\newblock \emph{Mathematical Programming, Series B}, 94\penalty0
  (2--3):\penalty0 193--206, 2003.
\newblock \doi{10.1007/s10107-002-0315-0}.

\bibitem[Kannan et~al.(1990)Kannan, Lov{\'a}sz, and
  Scarf]{Kannan+Lovasz+Scarf:93}
Ravi Kannan, L{\'a}szl{\'o} Lov{\'a}sz, and Herbert~E. Scarf.
\newblock The shapes of polyhedra.
\newblock \emph{Math. Operations Research}, 15\penalty0 (2):\penalty0 364--380,
  1990.

\bibitem[Maclagan(2001)]{Maclagan}
Diane Maclagan.
\newblock Antichains of monomial ideals are finite.
\newblock \emph{Proceedings of the AMS}, 129\penalty0 (6):\penalty0 1609--1615,
  2001.

\bibitem[McMullen(1973)]{McMullen:73}
Peter McMullen.
\newblock Representation of polytopes and polyhedral sets.
\newblock \emph{Geometriae Dedicata}, 2:\penalty0 83--99, 1973.

\bibitem[Meyer(1974)]{Meyer:74}
Walter Meyer.
\newblock Indecomposable polytopes.
\newblock \emph{Trans. Amer. Math. Soc.}, 190:\penalty0 77--86, 1974.

\bibitem[Smilanski(1987)]{Smith:87}
Zeev Smilanski.
\newblock Decomposability of polytopes and polyhedra.
\newblock \emph{Geometriae Dedicata}, 24:\penalty0 29--49, 1987.

\bibitem[Steinberger(2004)]{Steinberger:04}
John~P. Steinberger.
\newblock Minimal vanishing sums of roots of unity with large coefficients.
\newblock Manuscript, UC Davis, September 2004.

\end{thebibliography}
\end{document}